\documentclass[11pt]{amsart}

\usepackage{amsmath, amsthm, amssymb}
\usepackage{amsfonts}
\usepackage{mathtools}

\usepackage[cal=boondox]{mathalfa}
\usepackage{bbm}
\usepackage{mathrsfs}
\usepackage{stmaryrd}
\usepackage{esint}
\usepackage[T1]{fontenc}
\usepackage{lmodern}

\usepackage{graphicx}
\usepackage{tikz}
\usepackage{tikz-cd}
\usetikzlibrary{arrows}
\usepackage[all,cmtip]{xy}
\usepackage{amscd}
\usepackage{picinpar}

\usepackage{enumerate}
\usepackage{enumitem}
\usepackage{comment}
\usepackage{array}
\usepackage{extarrows}
\usepackage{verbatim}
\usepackage{calc}
\usepackage{xcolor}

\usepackage{hyperref}

\numberwithin{equation}{section}

\theoremstyle{plain}
\newtheorem{thm}{Theorem}
\numberwithin{thm}{subsection}
\newtheorem{cor}[thm]{Corollary}
\newtheorem{lem}[thm]{Lemma}
\newtheorem{prop}[thm]{Proposition}

\newtheorem{mainthm}{Theorem}

\theoremstyle{definition}

\theoremstyle{remark}
\newtheorem{rem}[thm]{Remark}

\DeclareMathOperator{\coh}{H}
\DeclareMathOperator{\Hilb}{Hilb}

\DeclareMathOperator{\pr}{pr}

\DeclareMathOperator{\Sym}{Sym}

\DeclareMathOperator{\Div}{Div}

\DeclareMathOperator{\Sing}{Sing}

\DeclareMathOperator{\Spec}{Spec}
\DeclareMathOperator{\length}{length}

\def\bA{{\mathbb{A}}}

\def\bC{{\mathbb{C}}}

\def\bP{{\mathbb{P}}}

\def\cB{{\mathscr{B}}}
\def\cC{{\mathscr{C}}}
\def\cD{{\mathscr{D}}}

\def\cF{{\mathscr{F}}}

\def\cN{{\mathscr{N}}}
\def\cO{{\mathscr{O}}}
\def\cP{{\mathscr{P}}}
\def\cQ{{\mathscr{Q}}}
\def\cR{{\mathscr{R}}}
\def\cS{{\mathscr{S}}}
\def\cT{{\mathscr{T}}}

\def\cU{{\mathscr{U}}}

\def\cX{{\mathscr{X}}}
\def\cY{{\mathscr{Y}}}
\def\cZ{{\mathscr{Z}}}

\def\aO{{\mathcal{O}}}
\def\aF{{\mathcal{F}}}
\def\aL{{\mathcal{L}}}

\def\aH{{\mathcal{H}}}

\begin{document}

\title[Hyperbolicity of general surfaces]{Kobayashi Hyperbolicity of General Surfaces via the Poincaré Problem}

\author{Song-Yan Xie}
\address{State Key Laboratory of Mathematical Sciences, Academy of Mathematics and Systems Science, Chinese Academy of Sciences, Beijing 100190, China;  School of Mathematical Sciences, University of Chinese Academy of Sciences, Beijing 100049, China.}
\email{xiesongyan@amss.ac.cn}

\author{Shengyuan Zhao}
\address{Universit\'e Paul Sabatier, Institut de Math\'ematiques de Toulouse, 118, route de Narbonne, F-31062 Toulouse, France}
\email{shengyuan.zhao@math.univ-toulouse.fr}


\begin{abstract}
We prove that a general surface in $\mathbb{P}^3$ of degree at least $18$ contains no rational or elliptic curves, strengthening the classical result of Clemens by replacing the original very general assumption by a genuine Zariski-open condition. Previously, nonexistence results in the ``general'' setting were known only in much higher degrees. Combining this with established algebraic degeneracy results for entire curves, we deduce the Kobayashi hyperbolicity of a general surface in
$\mathbb{P}^3$ of degree at least $18$, thereby resolving a question asked by Demailly--El Goul.

Our proof uses foliations induced by $2$-jet differentials. Two independent such differentials give rise to a multi-foliation tangent to all rational and elliptic curves. We establish a Poincar\'e-type bound for its algebraic leaves, yielding a mechanism to upgrade very general statements to general ones.

Our method also applies to complements of plane curves. In particular, we prove that the complement of two general cubic curves in $\mathbb{P}^2$ is hyperbolically embedded. 
\end{abstract}

\maketitle

\section{Introduction}

\subsection{Rational and elliptic curves on hypersurfaces}\label{subsection:1.1}
A foundational result of Clemens~\cite{Clemens1986} asserts that a
\emph{very general} hypersurface $X$ of degree $d$ in $\bP^n$ contains no rational curves if $d\geq 2n-1$, and no elliptic curves if
$d\geq 2n$. Here, ``very general'' means that $X$ lies outside a countable union of proper Zariski-closed subsets of the parameter space. In particular, when $n=3$, a very general surface in $\mathbb{P}^3$ contains no rational curves if its degree is at least $5$, and no elliptic curves if its degree is at least $6$. 
This bound was later sharpened by Xu~\cite{Xu1994}, who proved that a very general surface of degree $d \geq 5$ in $\mathbb{P}^3$ contains no elliptic curves. 

Clemens' approach is based on a delicate deformation-theoretic analysis of the normal bundle of a curve in a hypersurface; it is refined and extended by Ein, Voisin, Xu (see e.g.~\cite{Ein1988, Ein1991, Xu1994, Voisin1996}). Beyond its immediate scope, Clemens' theorem has had a lasting influence, notably inspiring  Siu's method of slanted vector fields in hyperbolicity problems~\cite{Siu2004, Paun2008, Rousseau2009, Merker2009, Darondeau2016}. 

We note that the terms ``general'', ``very general'', ``generic'', and ``very generic'' are used with inconsistent meanings across the literature. 
The objective of our work is to replace the very general condition required by Clemens and Xu by a genuine Zariski-open condition in the case of rational and elliptic curves on surfaces. In this paper we use ``general'' to refer to such a Zariski-open condition.

\subsection{The analytic aspect: Kobayashi hyperbolicity}
For a compact complex manifold $X$, Kobayashi hyperbolicity is characterized by Brody's lemma~\cite{Brody} as follows: $X$ is hyperbolic if and only if it contains no nonconstant entire holomorphic curve $\mathbb{C} \to X$. In particular, the presence of a rational or elliptic curve immediately implies non-hyperbolicity. Complementary to such algebraic obstructions, another circle of techniques for investigating complex hyperbolicity has been developed, based on jet differentials~\cite{Bloch1926, Siu2004}.

In arbitrary dimension $n$, a general hypersurface $X \subset \mathbb{P}^{n+1}$ of sufficiently high degree is known to be Kobayashi hyperbolic, thanks to the fundamental works of Siu~\cite{Siu2015} and Brotbek~\cite{Brotbek2017} (cf.\ \cite{Deng,Demailly2020,MerkerTa}). B\'erczi--Kirwan~\cite{BercziKirwan2024} improved the degree bound to be polynomial in $n$ by combining their methods with a result of Riedl--Yang~\cite{Riedl-Yang2022} (cf.\ \cite{Cadorel}). 

In dimension $n=2$, the current record bound is $286$, obtained by combining Demailly's \cite[Th.~10.8]{Demailly2020} with Riedl--Yang's work~\cite{Riedl-Yang2022}. If one relaxes ``general'' to ``very general,'' significantly lower bounds were known earlier via different methods. McQuillan~\cite{McQuillan1998, McQuillan1999} established that a very general surface of degree $d \geq 36$ in $\mathbb{P}^3$ is Kobayashi hyperbolic. This bound was subsequently improved to $d \geq 21$ by Demailly--El Goul~\cite{Demailly-ElGoul2000} and to $d \geq 18$ by P\u{a}un~\cite{Paun2008}\footnote{for $d\geq 17$ by Hou--Huynh--Merker--Xie~\cite{Hou-Huynh-Merker-Xie2026} and for $d\geq 16$ by Cui--Hou--Liu--Xie\cite{CuiHouLiuXie2026}.}. These results rely on Clemens' theorem~\cite{Clemens1986} and thus hold only under the very general hypothesis.

\subsection{Foliations and Poincar\'e problem}
A third perspective, initiated in the seminal work of McQuillan~\cite{McQuillan1998}, relates entire curves to foliations. This work marked one of the principal starting points of the modern birational theory of foliations (see \cite{BrunellaSurvey,McQuillan2008}). 

A classical problem in foliation theory, raised by Poincar\'e~\cite{Poincare},
asks whether an algebraic differential equation in two variables admits a
rational first integral. Poincar\'e observed that a decisive step toward such a criterion would be to bound the degree of a possible algebraic solution in terms of the degree of the differential equation itself. This perspective later evolved into the problem of bounding the degree of invariant algebraic curves of a holomorphic foliation purely in terms of its numerical invariants. 

Lins Neto~\cite{LinsNeto2002} showed that no such bound exists in full
generality, constructing families of foliations on $\bP^2$ whose algebraically integrable members admit rational first integrals of unbounded degrees and whose general leaves have unbounded genera. Nevertheless, under additional geometric input, the Poincar\'e problem can often be answered; see \cite{CerveauLinsNeto,Carnicer1994,Pereira2001,GenzmerMol,PereiraSvaldi,SpicerSvaldi,LuTan2024,Vassiliadis}. We will show that some results of Lu--Miyaoka~\cite{LuMiyaoka} solve the Poincar\'e problem for foliations on surfaces of general type that arise in our study of hyperbolicity. 

\subsection{Main results: from ``Very General'' to ``General''}
\subsubsection{Hypersurfaces in $\bP^3$}
Demailly and El Goul~\cite[Remark~7.2]{Demailly-ElGoul2000} raised the challenge of strengthening hyperbolicity from ``very general'' to ``general'' surfaces $X\subset\mathbb{P}^3$ when the degree is at least $21$. We give an affirmative answer:

\begin{mainthm}\label{mainthm:kobayashi-hypersurface}
A general surface $X \subset \mathbb{P}^3$ of degree at least $18$ is Kobayashi hyperbolic.\footnote{The bound $18$ can be replaced by $16$ if we use the preprints \cite{Hou-Huynh-Merker-Xie2026,CuiHouLiuXie2026} instead of \cite{Demailly-ElGoul2000,Paun2008}.} 
\end{mainthm}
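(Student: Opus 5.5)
The plan is to combine two ingredients via Brody's lemma: the algebraic degeneracy of entire curves on surfaces of degree $d\ge 18$ (Demailly--El Goul, P\u{a}un), and a Zariski-open nonexistence statement for rational and elliptic curves. By Brody's lemma, $X$ is Kobayashi hyperbolic if and only if it admits no nonconstant entire curve $f\colon \bC\to X$.

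\emph{Step 1: reduction to curves of geometric genus at most $1$.} The jet-differential arguments of Demailly--El Goul and P\u{a}un use two ingredients. The first is the existence of global $2$-jet differentials vanishing on an ample divisor, which follows from Riemann--Roch and Bogomolov-type vanishing and holds for \emph{every} smooth surface of degree $d\ge 18$ (or $\ge 15$). The second is a deformation argument. Reading their proofs in this way, for a general $X$ of degree $d\ge 18$ every entire curve $f\colon\bC\to X$ lies in an algebraic curve $C\subset X$. If $f$ is nonconstant, $C$ is irreducible and the lift of $f$ to the normalization $\tilde C$ is a nonconstant entire curve, so $\tilde C$ has genus $0$ or $1$. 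I would check carefully which portion of the degeneracy statement needs only a Zariski-open hypothesis. The risk is that the very general hypothesis in those works enters precisely through Clemens' theorem, used to exclude rational and elliptic curves as the final targets. In that case the degeneracy itself (every entire curve is tangent to the multi-foliation and lands in an invariant algebraic curve) holds on a Zariski-open set.

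\emph{Step 2: nonexistence of rational and elliptic curves on a general $X$.} Here I would use the foliation produced by two independent $2$-jet differentials $P_1,P_2$. Any rational or elliptic curve $C\subset X$ satisfies $P_i(f',f'')=0$, because the pullback is a jet differential on $\tilde C$ twisted by a negative line bundle. Eliminating $f''$ between $P_1$ and $P_2$ gives a first-order equation, that is, a multi-foliation $\cF$ on $X$, or on a finite cover of $\bP(T_X)$, to which $C$ is tangent. I would then bound the degree of such invariant curves, a Poincar\'e-type bound, using the Lu--Miyaoka inequality for curves of low geometric genus on surfaces of general type. That inequality gives $K_X\cdot C\le c\,(\text{genus term}+\text{singularity contribution of }\cF)$, with constants depending only on $d$ and on the degrees of $P_1,P_2$. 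Hence all rational and elliptic curves on $X$ have degree at most some $N(d)$, uniformly on a Zariski-open set of surfaces where the $P_i$ vary in a flat family.

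\emph{Step 3: upgrade.} In the universal family over the parameter space, the surfaces containing a rational or elliptic curve of degree at most $N(d)$ form a constructible set. It is the image of a relative Hilbert/Kontsevich moduli space of bounded degree, hence a finite union of locally closed sets. By Clemens and Xu, its closure misses a very general point, so it is contained in a proper Zariski-closed subset. Removing it, together with the locus where Steps 1--2 fail, leaves a Zariski-open set on which no nonconstant entire curve exists.

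The main obstacle is Step 2: obtaining the Poincar\'e bound uniformly. This requires controlling the singularities of the multi-foliation, including the degeneracy locus where $P_1,P_2$ fail to be independent and any invariant curves lying inside it, in a manner that is constant along a Zariski-open family.
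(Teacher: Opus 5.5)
Your architecture is the paper's: a relative symmetric differential from two $2$-jet differentials, a Poincar\'e bound for invariant curves via Lu--Miyaoka, Hilbert-scheme constructibility plus Clemens--Xu, then McQuillan and Brody. However, neither load-bearing step is actually carried out, and Step~2 as you phrase it would fail.

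Lu--Miyaoka bounds $K\cdot C$ in terms of $g$ and $n_{\mathrm{ord}}(C)$, so you need a uniform bound on the ordinary double and triple points of $C$. On $X$ the singularities of the multi-foliation do not give this. At a point where $\omega$ has distinct roots, an irreducible invariant curve can pass twice, its two branches following two different local sheets $\ker\alpha_i$, $\ker\alpha_j$. That produces a node at a regular point, and no invariant of the multi-foliation bounds the number of such nodes.

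The paper lifts to the spectral divisor $D_\omega\subset\bP(T_X)$ and resolves each horizontal component to a smooth $S$. On $S$ the pulled-back contact form cuts out a \emph{genuine} saturated foliation $\aF$. The tangent lift $C^S$ is birational to $C$, $\Sing(C^S)\subset\Sing(\aF)$, and $\#\Sing(\aF)\leq c_2(\Omega^1_S\otimes N_{\aF})$. Lu--Miyaoka then bounds $K_S\cdot C^S$, not $K_X\cdot C$, and you must still convert this into a bound on $H\cdot C$. The paper does this with the second inequality $(C^S)^2\geq -B(g)$ and a Hodge-index argument (Lemma~\ref{lem:poincare-hodge-bound}).

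The uniformity you leave as ``the main obstacle'' is obtained by running all of this in family:
\begin{itemize}
\item an \'etale base change so that the horizontal components are defined over the function field;
\item a simultaneous resolution, with flatness over a Zariski-open set, so that the relative foliation restricts to the fiberwise ones and $c_2$, $K^2$, $K\cdot H$, $H^2$, $N_{\aF}\cdot H$ are locally constant;
\item separate flat-family degree bounds for curves in the base locus $Z(\omega_t)$, in the non-isomorphism locus of the resolution, or in the zero locus of the contact map.
\end{itemize}

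Step~1 is only asserted conditionally. Checking that the very general hypothesis of Demailly--El Goul or P\u{a}un enters only through Clemens is exactly what has to be proved, and you give no argument. The paper does not extract degeneracy from those works. Its sole jet-theoretic input is the Zariski-open Theorem~\ref{prop:jet-direction-locus}: all first lifts $f_{[1]}$ lie in a surface $\cS_d\subset\bP(T_{\cX/\cP})$ over a Zariski-open $\cP_d$. This is also what makes your ``two independent differentials'' uniform. The symmetric differential is then a section of $\mathcal I_{\cS_d}\otimes\mathcal A^N$ pushed down to $X$.

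Degeneracy follows from the Step~2 construction rather than being imported. If $f$ is Zariski dense, $\overline{f_{[1]}(\bC)}$ is one of the horizontal components, so $f$ lifts to an entire curve on $S$ tangent to $\aF$. Since $S$ is generically finite over $X$, it is of general type (Lemma~\ref{lem:resolved-surfaces-general-type}), and McQuillan's theorem gives the contradiction. So Step~1 should come after Step~2.
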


This improves the previously known bound for general surfaces from $\deg X\geq 286$ to $\deg X\geq 18$. 
The following is our main result underpinning this improvement:

\begin{mainthm}\label{mainthm:main-algebraic-degeneracy}
Let $p:\cX\to\cP$ be a smooth projective morphism over a smooth complex quasi-projective variety $\cP$, with smooth surface fibers
of general type. Assume that a very general fiber contains no rational
or elliptic curves. Assume that there exist an integer $m\geq 1$,
a line bundle $\aL$ on $\cX$, and a nonzero relative twisted
symmetric differential
\[
\boldsymbol\omega \in 
\coh^0\bigl(\cX, \Sym^m\Omega^1_{\cX_{\cP}/\cP}\otimes \aL\bigr)
\]
with the following property: for every $t \in \cP$ and every entire curve 
$f:\bC \to X_t$, one has \(f^*(\boldsymbol\omega\vert_{X_t})=0\). Then there exists a nonempty Zariski-open subset of $\cP$ such that, for every $t$ in this subset, the surface $X_t$ contains no rational 
or elliptic curves.
\end{mainthm}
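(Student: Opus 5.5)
Our plan is to show that rational and elliptic curves on fibers $X_t$ lie, for $t$ in a Zariski-dense open set, in a family of \emph{bounded degree}. Then a standard Hilbert-scheme argument upgrades the very general statement to a general one. Fix a relatively ample line bundle $H$ on $\cX$ (after shrinking $\cP$). The key reduction is the following. Suppose there is a constant $C$, depending only on numerical data of $p$, $m$ and $\aL$, such that every rational or elliptic curve $C_t\subset X_t$ (for $t$ in some nonempty open set) satisfies $H\cdot C_t\le C$. Then such curves are parametrized by finitely many components of the relative Hilbert scheme $\Hilb(\cX/\cP)$, which is of finite type over $\cP$ in bounded degree. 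The locus of pairs $(t,[C])$ with $C$ of geometric genus $\le 1$ is constructible. Its image in $\cP$ is constructible and avoids a very general point by hypothesis, so it is not dense. Its closure is therefore a proper closed subset, and its complement is the desired Zariski-open set.

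To get the degree bound, we use $\boldsymbol\omega$. Any nonconstant map from $\bC$ to a rational or elliptic curve $C_t\subset X_t$ is an entire curve, so by hypothesis $\boldsymbol\omega|_{X_t}$ vanishes on the tangent directions of $C_t$ at general points. Thus $C_t$ is an algebraic leaf of the multi-foliation $\cF_t$ defined by $\boldsymbol\omega|_{X_t}$, a web of degree $\le m$ on $X_t$. We pass to the $m$-fold cover: let $\pi:\bP(T_{X_t})\to X_t$ and let $\Sigma_t\subset\bP(T_{X_t})$ be the zero locus of $\boldsymbol\omega$ viewed as a section of $\aO(m)\otimes\pi^*\aL$. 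Taking an irreducible component and its desingularization $Y_t\to X_t$ (generically finite of degree $\le m$), the tautological line field gives a genuine foliation $\cG_t$ on $Y_t$, and $C_t$ lifts to an invariant curve $\tilde C_t$. We can perform this construction in families over a stratification of $\cP$, so the numerical invariants of $(Y_t,\cG_t)$ — namely $K_{Y_t}$, $K_{\cG_t}$, $c_2$, and the ramification — are bounded uniformly over a nonempty open set. (If $\Sigma_t$ contains a whole fiber direction component in a degenerate way, we shrink $\cP$ and take a reduced component.)

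The main obstacle is the Poincaré-type bound: bounding $H\cdot C_t$ for invariant curves of geometric genus $\le 1$ in terms of the invariants of $(Y_t,\cG_t)$. Our first attempt is a Miyaoka--Lu type inequality. For a curve $C$ of geometric genus $g$ on a surface of general type, Lu--Miyaoka give $K\cdot C \le a\,(2g-2+\text{(singularity/ramification contribution)}) + b\,(3c_2-K^2)$-type bounds, and in particular $K\cdot C\le$ a constant depending only on $K^2,c_2$ when $g\le1$, provided the singularities of $C$ are controlled. Uncontrolled singularities are exactly where the foliation enters. Along an invariant curve, the singular points of $\tilde C_t$ and its tangency with $\cG_t$ are bounded by the indices of $\cG_t$: Camacho--Sad and Baum--Bott type formulas give $K_{\cG}\cdot\tilde C = -\chi(\tilde C)+Z(\cG,\tilde C)$, with $Z$ bounded by the number of singular points of $\cG_t$, i.e.\ by $c_2(T_{Y_t})+K_{\cG}^2$ and similar quantities. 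Combining this with the orbifold Miyaoka--Lu inequality applied to $X_t$ (general type), and using $2g-2\le0$, we expect $K_{X_t}\cdot C_t\le C'$ uniformly. Since $K_{X_t}$ is big, we also need to handle curves in the null locus. These form finitely many curves, and they are bounded in each fiber by Hodge index once we restrict to an open set where $K$ is ample (which is the case for hypersurfaces). Together these yield the degree bound needed in the first step.
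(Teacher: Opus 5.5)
Your architecture is the paper's: a uniform degree bound for invariant curves of genus $\le 1$, obtained from a foliation on the spectral cover, followed by the Hilbert-scheme and constructibility argument. The Hilbert-scheme step is fine. The gap is the Poincar\'e step, which you leave at ``we expect''; as you set it up, it does not go through.

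You apply Lu--Miyaoka on $X_t$. There it reads $K_{X_t}\cdot C_t\le 3c_2-K^2+4g-4+n_{\mathrm{ord}}(C_t)$, so every ordinary singularity of $C_t$ weakens the bound. The singularity term cannot just be dropped. A bound on $K\cdot C$ by $g$ and Chern numbers alone, valid on every surface of general type, would make the foliation superfluous; Bogomolov-type bounds of this kind need $c_1^2>c_2$, which fails for surfaces in $\bP^3$.

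On $X_t$ the curve $C_t$ is only invariant by a web. Two of its branches can cross at a point where the web is regular, each tangent to a different local sheet $\ker\alpha_i\neq\ker\alpha_j$. Such nodes are not singular points of $\mathcal G_t$, since the two branches are separated in $\bP(T_{X_t})$. Their number is not bounded by any invariant of $(Y_t,\mathcal G_t)$ and may grow with the very degree you are trying to bound. The identity $K_{\mathcal G}\cdot\tilde C=-\chi(\tilde C)+Z(\mathcal G,\tilde C)$ does not repair this: it is a single relation among $K_{Y_t}\cdot\tilde C$, $K_{\mathcal G}\cdot\tilde C$ and $\tilde C^2$, and it bounds none of them.

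The missing idea is to apply Lu--Miyaoka upstairs, on the resolution $Y_t$ (the paper's $S_{j,t'}$). First, $Y_t$ is of general type because it dominates $X_t$. The strict transform $\tilde C_t$ is invariant by a genuine foliation, so $\Sing(\tilde C_t)\subset\Sing(\mathcal G_t)$. Hence $n_{\mathrm{ord}}(\tilde C_t)\le\sum_p\mu_p=c_2(Y_t)+K_{Y_t}\cdot N_{\mathcal G_t}+N_{\mathcal G_t}^2$, which is locally constant, hence uniformly bounded, in the family. Both Lu--Miyaoka inequalities then give $K_{Y_t}\cdot\tilde C_t\le A(g)$ and $\tilde C_t^2\ge-B(g)$. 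The Hodge index theorem against the nef and big pullback $P$ of the canonical class of the minimal model turns these into a bound on $H_Y\cdot\tilde C_t$. Here $H_Y=\mathcal A\otimes r^*H$ is chosen so that $H\cdot C_t\le H_Y\cdot\tilde C_t$ whenever $\tilde C_t\to C_t$ is birational. The lower bound on $\tilde C_t^2$ replaces your appeal to ampleness of $K$, which is not a hypothesis of the theorem and fails on $Y_t$ anyway.

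You also need separate uniform bounds for two further classes of curves:
\begin{itemize}
\item curves contained in $Z(\omega_t)$, where $f^*\omega_t=0$ is vacuous and $C_t$ is not a leaf of anything;
\item curves whose tangent lift lies in the exceptional locus of the resolution or in the zero locus of the contact map, where invariance or birationality of the lift is lost.
\end{itemize}
The paper obtains these bounds from flatness of the one-dimensional parts of these loci after shrinking $\cP$.
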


The hypothesis of Theorem~\ref{mainthm:main-algebraic-degeneracy} holds for hypersurfaces in $\bP^3$ of $d\geq 18$, as explained in Section~\ref{sec:jet-to-symmetric}. This formulation uses an argument suggested by Yum-Tong Siu and the construction of invariant $2$-jet differentials established for $d\geq 21$ by Demailly--El Goul~\cite{Demailly-ElGoul2000}, for $d\geq 18$ by P\u{a}un~\cite{Paun2008}.\footnote{for $d\geq 17$ by Hou--Huynh--Merker--Xie~\cite{Hou-Huynh-Merker-Xie2026} and for $d\geq 16$ by Cui--Hou--Liu--Xie\cite{CuiHouLiuXie2026}.} 
It follows that:
\begin{mainthm}\label{mainthm:no-rational-curves} 
A general surface $X \subset \mathbb{P}^3$ of degree at least $18$ contains no rational or elliptic curves. 
\end{mainthm}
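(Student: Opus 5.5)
We will deduce Theorem~\ref{mainthm:no-rational-curves} from Theorem~\ref{mainthm:main-algebraic-degeneracy}. This requires checking its hypotheses for a suitable family. Fix $d\geq 18$. Let $\cP\subset \bP(\coh^0(\bP^3,\aO(d)))$ be the Zariski-open set of smooth degree-$d$ surfaces. Let $p:\cX\to\cP$ be the universal family, which is smooth and projective with smooth surface fibers. By adjunction $K_{X_t}=\aO_{X_t}(d-4)$ is ample, so every fiber is of general type. By Clemens~\cite{Clemens1986} (with Xu's refinement~\cite{Xu1994}), a very general fiber contains no rational or elliptic curves once $d\geq 6$. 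This gives the second hypothesis. It then only remains to construct the relative twisted symmetric differential $\boldsymbol\omega$ annihilated by every entire curve in every fiber. After possibly shrinking $\cP$ to a nonempty Zariski-open subset, which is harmless for the conclusion, the theorem follows directly.

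To construct $\boldsymbol\omega$, start from the invariant $2$-jet differentials of Demailly--El~Goul~\cite{Demailly-ElGoul2000} and P\u{a}un~\cite{Paun2008}. For $d\geq 18$, their Riemann--Roch and Morse-inequality estimates give, on every smooth $X_t$, nonzero sections of $E_{2,m}T^*_{X_t}\otimes\aO(-A)$ with $A$ ample. By semicontinuity and after shrinking $\cP$, these sections glue to a relative section $P$ over $\cX$. We will pick two such relative jet differentials $P_1,P_2$ that are independent on a general fiber, for example by taking sections with distinct weighted-degree components. By the fundamental vanishing theorem, every entire curve $f$ in a fiber satisfies $P_1(f',f'')=P_2(f',f'')=0$. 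Following Siu's suggested argument, we then eliminate the second-order jet. In Demailly--Semple coordinates an invariant $2$-jet differential is a polynomial in $f'$ and the Wronskian $W=f_1'f_2''-f_2'f_1''$, weighted-homogeneous. The resultant of $P_1,P_2$ with respect to $W$ is therefore a twisted symmetric differential $\boldsymbol\omega=\operatorname{Res}_W(P_1,P_2)\in\coh^0(\cX,\Sym^m\Omega^1_{\cX/\cP}\otimes\aL)$. Since $f$ gives a common root $W(f)$ of both polynomials at each point, we get $f^*\boldsymbol\omega=0$.

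The main obstacle is guaranteeing $\boldsymbol\omega\neq 0$. If the resultant vanishes identically, then $P_1,P_2$ share a common factor in $W$. Our first remedy is to replace them by distinct irreducible factors. Since $E_{2,m}$ has a graded filtration whose pieces are $\Sym^{a}\Omega\otimes K^{b}$, each irreducible factor is again a twisted jet differential. The vanishing theorem still applies to each factor with a negative twist, because the twists of the factors add up to the total $-A$, and we may always pick a factor with negative enough twist. If some irreducible factor is independent of $W$, it is already a symmetric differential, and we take it as $\boldsymbol\omega$. Otherwise, we need two non-associate irreducible factors with negative twist. We expect to obtain these by a dimension count: the space of sections grows like $m^4$, while multiples of a single irreducible polynomial contribute lower order. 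Making this uniform over the family, which requires choosing the factors relatively and checking that the twist stays negative after shrinking $\cP$, is the delicate point.
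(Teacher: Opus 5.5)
\textbf{Verdict: genuine gap.} Your reduction to Theorem~\ref{mainthm:main-algebraic-degeneracy} is exactly the paper's: universal family of smooth degree-$d$ surfaces, general type by adjunction, and the very general hypothesis from Clemens--Xu. The gap is in your construction of $\boldsymbol\omega$, at the step you flag yourself. You need two negatively twisted invariant $2$-jet differentials with no common factor involving $W$, and your dimension count cannot provide them.

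Write $\mathcal L_m:=\aO_{X_{t,2}}(m)\otimes\pi^*\aO(-A)$ on the second Demailly--Semple level, where $\pi:X_{t,2}\to X_t$ is the projection. If $Q$ is an irreducible section of some line bundle $\mathcal M$ on $X_{t,2}$, the multiples of $Q$ in $\coh^0(X_{t,2},\mathcal L_m)$ form $Q\cdot\coh^0(X_{t,2},\mathcal L_m\otimes\mathcal M^{-1})$. This still has dimension of order $m^4$ whenever $\mathcal L_m\otimes\mathcal M^{-1}$ is big. So counting cannot exclude that the whole negatively twisted linear system has a divisorial fixed part $Z(Q)$ whose cofactors are only positively twisted. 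In that case the vanishing theorem only places $f_{[2]}(\bC)$ inside $Z(Q)$, which may dominate $X_{t,1}$. No second independent equation, and hence no nonzero resultant, is produced.

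Your other remarks do not close this. The ``twists add up to $-A$'' observation gives at most one negatively twisted factor, not two non-associate ones. A $W$-free factor need not be the negative one, so it need not be annihilated by $f$. Ruling out this scenario is precisely the hard content of Demailly--El Goul and P\u{a}un. They restrict the negatively twisted bundles to the components of the zero divisor of the first differential and run Morse inequalities there; this is where $d\geq 21$, resp.\ $d\geq 18$, is needed. A telling symptom: your argument uses $d\geq18$ only to get a single negatively twisted $2$-jet differential, which already exists for $d\geq 15$. If the argument worked, it would prove the theorem for $d\geq15$, below the current state of the art.

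The paper does not redo this analysis. It invokes Theorem~\ref{prop:jet-direction-locus} (Hou--Huynh--Merker--Xie, building on Demailly--El Goul and P\u{a}un). This gives, over a Zariski-open $\cP_d$, a closed $\cS_d\subset\bP(T_{\cX_d/\mathcal H_d^{\mathrm{sm}}})$ with fibers of dimension at most $2$ containing every first lift $f_{[1]}(\bC)$. Corollary~\ref{cor:degree-ge18-relative-symmetric-differential} then proceeds in four steps:
\begin{enumerate}
\item take, by Serre's theorem, a nonzero section of $\mathcal I_{\cS_d}\otimes\mathcal A^N$ on the generic fiber;
\item spread it out over an open subset of the base;
\item shrink the base so the section is nonzero on every fiber;
\item push it forward via $(\pi_{1,0})_*\mathcal A^N\simeq\Sym^N\Omega^1\otimes\aO(aN)$.
\end{enumerate}
No negativity of the twist is needed, since $f^*\omega_t=0$ follows directly from $f_{[1]}(\bC)\subset(\cS_d)_t$. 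Your resultant is the algebraic counterpart of projecting $Z(P_1)\cap Z(P_2)$ to $X_{t,1}$. If you cite the direction-locus theorem in place of your dimension count, the resultant becomes unnecessary and your proof closes as in the paper.
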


Theorem~\ref{mainthm:no-rational-curves} strengthens Clemens' classical result from ``very general'' to ``general'' when $\deg X \ge 18$. 
The proof of Theorem~\ref{mainthm:kobayashi-hypersurface} follows from McQuillan's celebrated work~\cite{McQuillan1998} combined with Theorem~\ref{mainthm:main-algebraic-degeneracy}. McQuillan proved that any entire curve $f:\bC\rightarrow X_t$ in a projective surface $X_t$ of general type that is tangent to a multi-foliation (e.g., $f^*(\boldsymbol\omega\vert_{X_t})=0$) must be algebraically degenerate. Therefore its image $f(\bC)$ is contained in some rational or elliptic curve. Once the hypothesis of Theorem~\ref{mainthm:main-algebraic-degeneracy} is satisfied, such curves cannot exist, which yields hyperbolicity. More generally, we have:

\begin{mainthm}\label{mainthm:hypersurface-hyperbolicity}
Under the assumptions of Theorem~\ref{mainthm:main-algebraic-degeneracy}, for a general parameter $t \in \cP$, the surface $X_t$ is Kobayashi hyperbolic.
\end{mainthm}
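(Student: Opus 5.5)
We deduce this directly from Theorem~\ref{mainthm:main-algebraic-degeneracy} together with McQuillan's degeneracy theorem and Brody's lemma.

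First, Theorem~\ref{mainthm:main-algebraic-degeneracy} gives a nonempty Zariski-open subset $\cU\subset\cP$ such that for every $t\in\cU$ the surface $X_t$ contains no rational or elliptic curves. Since $\cP$ is smooth and quasi-projective, and hence irreducible on each component, we may shrink $\cU$ if needed. Then "general $t$" means $t\in\cU$.

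Fix $t\in\cU$ and suppose $X_t$ is not Kobayashi hyperbolic. Since $X_t$ is compact, Brody's lemma gives a nonconstant entire curve $f:\bC\to X_t$. By hypothesis $f^*(\boldsymbol\omega\vert_{X_t})=0$.

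To apply McQuillan we need $\boldsymbol\omega\vert_{X_t}\neq 0$. The locus of $t$ where the restriction vanishes identically is Zariski closed, because it is where a section of a coherent sheaf vanishes on fibers; by generic flatness and upper semicontinuity it is a proper closed subset, since $\boldsymbol\omega\ne0$. So we shrink $\cU$ to exclude it.

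A nonzero twisted symmetric differential $\boldsymbol\omega\vert_{X_t}\in \coh^0(X_t,\Sym^m\Omega^1_{X_t}\otimes\aL\vert_{X_t})$ defines, on a dense Zariski-open set, the vanishing of a degree-$m$ homogeneous form on $T_{X_t}$. Its zero directions give at most $m$ local directions, that is, a multi-foliation. After passing to a ramified cover (the associated spectral/graph cover in $\bP(T_{X_t})$) it becomes a genuine foliation $\aF$ on a surface $Y$ of general type; either $Y$ dominates $X_t$ generically finitely, or one uses McQuillan's formulation for multi-foliations directly. The condition $f^*\boldsymbol\omega=0$ says that $f$ is tangent to this multi-foliation, so $f$ lifts to an entire curve tangent to $\aF$.

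McQuillan's theorem~\cite{McQuillan1998} for surfaces of general type then says $f$ is algebraically degenerate. Its Zariski closure is therefore an irreducible curve $C\subset X_t$ whose normalization receives a nonconstant entire curve, which forces geometric genus $\le1$. Thus $C$ is rational or elliptic, contradicting $t\in\cU$. Hence $X_t$ is hyperbolic for every $t$ in the shrunken open set.

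The main obstacle lies in the second step: ensuring McQuillan's theorem applies to the multi-foliation defined by a twisted symmetric differential, for instance when the form is non-reduced or the lifted curve lands in the singular or branch locus. The non-reduced case is handled by replacing $\boldsymbol\omega$ with its reduced factor. If the curve's image lies in the degeneracy locus of the form, it lies in a curve anyway, so the argument reduces to the same genus conclusion.
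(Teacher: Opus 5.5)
Your proposal is correct and follows essentially the same route as the paper. Both arguments shrink $\cP$ so that Theorem~\ref{mainthm:main-algebraic-degeneracy} holds and $\omega_t\neq 0$, then take a Brody curve and lift it through $f_{[1]}$ to a dominating component of the spectral divisor, whose resolution is of general type. McQuillan's theorem then gives algebraic degeneracy, and the Zariski closure of $f(\bC)$ is a curve of geometric genus at most one, a contradiction.

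The one point you leave implicit is the case split the paper makes explicit. If $f$ is not Zariski dense, it is already degenerate. If it is Zariski dense, the closure of $f_{[1]}(\bC)$ is one of the horizontal components $Y_{j,t'}^\circ$, so the lift lands in a genuinely foliated surface and is not contained in $W_{j,t'}$.
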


Hou--Huynh--Merker--Xie~\cite{Hou-Huynh-Merker-Xie2026} and Cui--Hou--Liu--Xie~\cite{CuiHouLiuXie2026} developed a computer-assisted method verifying the hypothesis of Theorem~\ref{mainthm:main-algebraic-degeneracy} for $\deg X\geq 16$. Their approach, based on algorithmic refinements and increased computational power, is expected to further lower the bound to $15$, which appears to be the natural limit of the $2$-jet method. Since the implementation is computationally intensive and beyond the scope of this exposition, we state our main results under the more convenient assumption $\deg X\geq 18$.

\subsubsection{Curve complements}
Our method also applies to the study of Kobayashi hyperbolicity of open surfaces. As in the case of hypersurfaces, the study of hyperbolicity of complements of divisors in the plane has a long history. In the case of reducible boundaries in certain degree ranges, Xi Chen~\cite{Chenlog} proved logarithmic algebraic hyperbolicity for very general unions of plane
curves, while Rousseau~\cite{Rousseau2009} established Kobayashi
hyperbolicity for very general plane curves. More recently, Caporaso--Turchet~\cite{CaporasoTurchet} studied the closely related algebraic
exceptional locus and raised two open questions which form precisely the novelty of Theorem~\ref{mainthm:plane-pair-complement} below. The first asks for results when the boundary has exactly two irreducible components, and was solved in certain degree ranges in~\cite{HouWangXie2026}. The second asks whether the very general condition can be upgraded, which is precisely the purpose of the method developed in this paper.

For integers $1\leq d_1\leq d_2$, let
\begin{equation}\label{eq:plane-pair-parameter-space}
\cP_{d_1,d_2}
:=
\bP\coh^0\bigl(\bP^2,\aO_{\bP^2}(d_1)\bigr)
\times
\bP\coh^0\bigl(\bP^2,\aO_{\bP^2}(d_2)\bigr),
\end{equation}
and let $\cP_{d_1,d_2}^{\mathrm{snc}}$ be the nonempty Zariski-open
subset parametrizing effective divisors $D_a=C_{1,a}+C_{2,a}$ with two smooth irreducible components which intersect transversely.

\begin{mainthm}\label{mainthm:plane-pair-complement}
Assume that
\[
d_1,d_2\geq 3,
\quad\text{or}\quad
d_1=2,\ d_2\geq 5,
\quad\text{or}\quad
d_1=1,\ d_2\geq 8.
\]
There is a nonempty Zariski-open subset
$\cU_{d_1,d_2}\subset\cP_{d_1,d_2}^{\mathrm{snc}}$ such that, for every
$a\in\cU_{d_1,d_2}$ and every rational curve $\Gamma\not\subset D_a$ with normalization $\nu:\bP^1\to\Gamma$, one has \(\#\operatorname{Supp}\nu^*D_a\geq 3\).
\end{mainthm}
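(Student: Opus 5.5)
We plan to prove Theorem~\ref{mainthm:plane-pair-complement} by running the logarithmic analogue of the argument behind Theorem~\ref{mainthm:main-algebraic-degeneracy}. The pair $(\bP^2,D_a)$ plays the role of the surface of general type. The logarithmic cotangent bundle $\Omega^1_{\bP^2}(\log D_a)$ replaces $\Omega^1_{X_t}$. The objects to exclude are rational curves $\Gamma\not\subset D_a$ whose normalization meets $D_a$ in at most two points, i.e.\ curves $\bP^1\setminus\{\le 2\text{ pts}\}\to\bP^2\setminus D_a$ (log-rational and log-elliptic curves). The degree hypotheses are exactly those making $K_{\bP^2}+D_a$ big, namely $d_1+d_2\geq 4$ with enough room.

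\textbf{Step 1: the very general statement.} For very general $a\in\cP^{\mathrm{snc}}_{d_1,d_2}$, every rational curve meets $D_a$ in at least three points. We take this from Xi Chen's logarithmic algebraic hyperbolicity for very general unions of plane curves, or from the two-component results in~\cite{HouWangXie2026}, in the stated degree ranges. This is the log analogue of the Clemens--Xu input assumed in Theorem~\ref{mainthm:main-algebraic-degeneracy}.

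\textbf{Step 2: the multi-foliation.} Over the family $\cP^{\mathrm{snc}}_{d_1,d_2}$ we construct a nonzero relative twisted logarithmic symmetric differential
\[
\boldsymbol\omega\in\coh^0\bigl(\cX,\Sym^m\Omega^1_{\cX/\cP}(\log\cD)\otimes\aL\bigr)
\]
that vanishes on every entire curve in $\bP^2\setminus D_a$. We would follow Section~\ref{sec:jet-to-symmetric}. First, Riemann--Roch/Morse inequalities give logarithmic invariant $2$-jet differentials with negative twist, as in Rousseau's work on complements of plane curves. Second, two independent such differentials, combined by Siu's elimination, yield a symmetric differential annihilating every entire curve.

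\textbf{Step 3: the logarithmic Poincaré bound.} On each fiber, $\boldsymbol\omega_a$ defines a multi-foliation $\cF_a$ on $\bP^2$, logarithmic along $D_a$. Any log-rational or log-elliptic curve $\Gamma$ is $\cF_a$-invariant, since its parametrization by $\bC$ or $\bC^*$ (composed with $\exp$) gives an entire curve in the complement. We then bound $\deg\Gamma$ uniformly in $a$ using logarithmic Miyaoka--Lu type inequalities, the log versions of the Lu--Miyaoka results, applied to the log surface of general type $(\bP^2,D_a)$. Concretely, the log Euler characteristic of the normalization is $\chi\geq 0$ because at most two punctures are allowed. The Lu--Miyaoka inequality then bounds $(K_{\bP^2}+D_a)\cdot\Gamma$, and hence $\deg\Gamma$, by a constant depending only on $d_1,d_2$. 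The twist $\aL$ and the tangency to $\cF_a$ enter through the standard estimate of $\Gamma\cdot(\text{tangency divisor})$.

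\textbf{Step 4: the constructibility argument.} With $\deg\Gamma\le N$ fixed, we consider the set of $a$ such that some rational curve of degree $\le N$ meets $D_a$ in at most two points. This is a constructible subset of $\cP^{\mathrm{snc}}_{d_1,d_2}$: it is the image of a finite-type relative moduli/Hilbert scheme of maps $\bP^1\to\bP^2$ of bounded degree, with a closed incidence condition on $\#\operatorname{Supp}\nu^*D_a\le 2$. By Step~1 it misses a very general point. Its closure is therefore a proper closed subset, and the complement is the desired $\cU_{d_1,d_2}$.

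The main obstacle is Step~3. We need a genuinely uniform degree bound for $\cF_a$-invariant curves with few punctures. This requires a logarithmic Lu--Miyaoka inequality that applies to singular curves tangent to a multi-foliation and remains valid when the curve meets $D_a$ nontransversally. We would first try to reduce to the compact case on a suitable cyclic cover branched along $D_a$, where Theorem~\ref{mainthm:main-algebraic-degeneracy}'s mechanism applies directly. Step~2 in the low cases ($d_1=1,d_2\ge8$ and $d_1=2,d_2\ge5$) also needs checking of the numerical positivity.
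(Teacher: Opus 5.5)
Your Steps 1 and 4 match the paper: Chen's inequality \eqref{eq:plane-pair-chen} for very general $a$, and constructibility via $\operatorname{Mor}_e(\bP^1,\bP^2)$ with a closed condition on the pulled-back supports. For Step 2 you need not rebuild logarithmic $2$-jet differentials, since the needed input is Theorem~\ref{thm:plane-pair-direction-locus} from \cite{HouWangXie2026}. The degree ranges also come from that theorem, not from bigness of $K_{\bP^2}+D_a$, which only needs $d_1+d_2\geq 4$.

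\textbf{The gap is in Step 3.} Lu--Miyaoka (Theorem~\ref{thm:LuMiyaoka}) bounds $K\cdot\Gamma$ only up to the term $n_{\mathrm{ord}}(\Gamma)$, and you drop that term. The whole role of the foliation is to control it, through $\Sing(\Gamma)\subset\Sing(\mathcal F)$ (Corollary~\ref{cor:poincare-curve-sing}). That inclusion holds for a genuine foliation, not for the multi-foliation $\cF_a$ on $\bP^2$. At a point off the discriminant, an invariant curve can have a node whose two branches follow two different local leaves of $\cF_a$, so nothing bounds $n_{\mathrm{ord}}(\Gamma)$ uniformly. You would have to pass to resolutions of the horizontal components of the spectral divisor, where the tangent lift separates such branches and the multi-foliation becomes a foliation, and do this uniformly in the family as in Section~\ref{sec:family-hypersurfaces}. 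In your logarithmic setting you would also need a log Lu--Miyaoka inequality on those spectral surfaces that handles tangency to the pulled-back boundary, which you leave unverified. The ``estimate of $\Gamma\cdot(\text{tangency divisor})$'' does not replace any of this.

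\textbf{The fix is your own fallback.} The cyclic-cover fallback you mention at the end is exactly the paper's route, and it removes both problems. However, its key steps are missing from your proposal.
\begin{enumerate}
\item Take $q_a:Y_a\to\bP^2$ obtained by extracting roots $\tau_i^{d_i}=F_i$. Since $D_a$ is snc, $Y_a$ is smooth and the cover is log-\'etale: $q^*\Omega^1_{\bP^2\times\cU/\cU}(\log\cD)\simeq\Omega^1_{\cY/\cU}(\log\cR)$. Also $K_{Y_a}\simeq q_a^*\aO_{\bP^2}(d_1+d_2-5)$ is ample, because $d_1+d_2\geq 6$ in all allowed ranges.
\item The hypothesis $\#\operatorname{Supp}\nu^*D_a\leq 2$ is used here, not in a log Euler characteristic count. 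A component of $q_a^{-1}(\Gamma)$ has normalization finite of degree $\delta$ over $\bP^1$ and \'etale outside at most two points. Riemann--Hurwitz gives $2g-2\leq-2\delta+2(\delta-1)=-2$, so the lift $C$ is a \emph{compact} rational curve with $H_a\cdot C=\delta\deg\Gamma$ (Lemma~\ref{lem:Y-lift}).
\item The direction locus pulls back to $\bP(T_{Y_a}(-\log R_a))$, and Serre's theorem gives a log symmetric differential vanishing on it. The inclusion $\Omega^1_{\cY/\cU}(\log\cR)\hookrightarrow\Omega^1_{\cY/\cU}\otimes\aO_{\cY}(\cR)$ turns it into an ordinary twisted symmetric differential $\omega_a$ for which either $C\subset Z(\omega_a)$ or $C$ is $\omega_a$-invariant.
\item Theorem~\ref{thm:main-degree-bound} with $g=0$, applied to the smooth family $\cY\to\cU$ of surfaces of general type, gives $\deg\Gamma\leq H_a\cdot C\leq E$. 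No logarithmic Lu--Miyaoka inequality is needed.
\end{enumerate}
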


Hou--Wang--Xie~\cite{HouWangXie2026} proved hyperbolicity for a very general parameter while Theorem~\ref{mainthm:plane-pair-complement} upgrades their very general condition to a Zariski-open one. As a consequence, we have:

\begin{mainthm}\label{mainthm:plane-complement-hyperbolicity}
Under the degree assumptions of Theorem~\ref{mainthm:plane-pair-complement}, there is a nonempty Zariski-open subset of $\cP_{d_1,d_2}^{\mathrm{snc}}$ such that $\bP^2\setminus D_a$ is Kobayashi hyperbolic and hyperbolically embedded in $\bP^2$ for every $a$ in this subset.
\end{mainthm}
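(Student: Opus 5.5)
We deduce Theorem~\ref{mainthm:plane-complement-hyperbolicity} from Theorem~\ref{mainthm:plane-pair-complement}. The deduction runs through a logarithmic analogue of the McQuillan-type algebraic degeneracy used for Theorem~\ref{mainthm:hypersurface-hyperbolicity}, followed by the standard Brody/Green criterion for hyperbolic embeddedness.

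\emph{Step 1: reduction to entire curves.} By Green's criterion (the logarithmic version of Brody's lemma), $\bP^2\setminus D_a$ is hyperbolically embedded in $\bP^2$ as soon as two things hold. First, there is no nonconstant entire curve $\bC\to\bP^2\setminus D_a$. Second, there is no nonconstant entire curve $\bC\to C_{i,a}\setminus C_{j,a}$ for $\{i,j\}=\{1,2\}$, nor into $C_{i,a}$ itself when that curve is rational and meets $C_{j,a}$ in at most one point. For $a\in\cP^{\mathrm{snc}}_{d_1,d_2}$, each boundary stratum is a smooth curve $C_i$ minus the $d_1d_2$ transverse intersection points. Since $d_1d_2\geq 3$ in all listed ranges, $C_i\setminus(C_i\cap C_j)$ is a Riemann surface of genus $g$ minus $n$ points with $2g-2+n>0$. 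It is therefore hyperbolic by uniformization, and Little Picard handles the rational case. So only entire curves in the open part $\bP^2\setminus D_a$ remain.

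\emph{Step 2: algebraic degeneracy via a log multi-foliation.} For the pair $(\bP^2,D_a)$ we have $K_{\bP^2}+D_a=\aO(d_1+d_2-3)$. This is positive, so the pair is of log general type, in exactly the listed degree ranges (the boundary cases $(3,3)$, $(2,5)$, $(1,8)$ being the ones where the log $2$-jet computations of~\cite{HouWangXie2026} become positive). As for hypersurfaces in Section~\ref{sec:jet-to-symmetric}, we use the logarithmic $2$-jet differentials produced in~\cite{HouWangXie2026} over the whole family $\cP^{\mathrm{snc}}_{d_1,d_2}$. These vanish on entire curves by the fundamental vanishing theorem. Taking two independent ones and eliminating the second-order terms gives a nonzero twisted log symmetric differential $\boldsymbol\omega\in\coh^0(\Sym^m\Omega^1_{\bP^2}(\log D_a)\otimes\aL)$ with $f^*\boldsymbol\omega=0$ for every entire $f:\bC\to\bP^2\setminus D_a$. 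The logarithmic version of McQuillan's theorem (tautological inequalities for multi-foliations on log general type surfaces, as in El Goul and McQuillan) then shows that $f$ is algebraically degenerate. Hence $f(\bC)\subset\Gamma$ for some irreducible curve $\Gamma\not\subset D_a$.

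\emph{Step 3: excluding the degenerate curves.} Let $\nu:\widetilde\Gamma\to\Gamma$ be the normalization. The entire curve lifts to a nonconstant map $\bC\to\widetilde\Gamma\setminus\operatorname{Supp}\nu^*D_a$, so by Picard this punctured curve is not hyperbolic. This forces one of two cases: $\widetilde\Gamma$ elliptic with $\nu^*D_a=0$, or $\widetilde\Gamma\cong\bP^1$ with $\#\operatorname{Supp}\nu^*D_a\leq 2$. The first case is impossible, since $D_a$ is ample and so $\nu^*D_a$ has positive degree. The second case is exactly excluded by Theorem~\ref{mainthm:plane-pair-complement} for $a\in\cU_{d_1,d_2}$. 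Hence the Zariski-open set $\cU_{d_1,d_2}$ works.

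The main obstacle is Step 2. We need the log $2$-jet differentials to exist with vanishing order uniform over a Zariski-open set, not just fibrewise, and we need the log McQuillan degeneracy statement for multi-foliations to apply to these pairs. We would take both from the existing literature~\cite{HouWangXie2026} and the log adaptation of~\cite{McQuillan1998}, and check that their degree hypotheses match the listed ranges.
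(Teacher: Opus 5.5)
Your Steps 1 and 3 are the paper's argument. That covers Green's theorem, the hyperbolicity of $C_{i,a}\setminus C_{j,a}$, and the genus case analysis on the normalization of the Zariski closure, with Theorem~\ref{mainthm:plane-pair-complement} excluding the rational case. For the boundary strata the paper writes $2g-2+\#(C_{i,a}\cap C_{j,a})=d_i(d_1+d_2-3)>0$ via adjunction and B\'ezout, which is equivalent to your count of $d_1d_2\geq 3$ punctures.

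The real difference is Step 2. The paper does not rebuild algebraic degeneracy at all. It intersects the open set of Theorem~\ref{mainthm:plane-pair-complement} with the Zariski-open set on which the Second Main Theorem \cite[Th.~1.1]{HouWangXie2026} holds; this intersection is nonempty because $\cP^{\mathrm{snc}}_{d_1,d_2}$ is irreducible. That theorem already forces every entire curve in $\bP^2\setminus D_a$ to be algebraically degenerate. So the ``main obstacle'' you flag is exactly what this single citation supplies, uniformly on a Zariski-open set.

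Your route re-derives that input on the model of the hypersurface case: log $2$-jets, elimination to a log symmetric differential, then McQuillan. This makes the foliation mechanism explicit. The cost is a logarithmic McQuillan theorem for multi-foliations, which in particular needs log general type of the spectral covers relative to the pulled-back boundary; the paper needs neither. If you keep your route, you can avoid the log version inside the paper's own framework:
\begin{itemize}
\item $f$ lifts to the cyclic cover $Y_a$ of \S\ref{subsec:cyclic-cover}, because $q_a$ is \'etale over $\bP^2\setminus D_a$ and $\bC$ is simply connected.
\item Your symmetric differential, pulled back to $Y_a$, still annihilates the lift.
\item $Y_a$ is of general type, so the ordinary McQuillan argument from the proof of Theorem~\ref{mainthm:hypersurface-hyperbolicity} applies.
\end{itemize}

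Two minor corrections. First, your degeneracy input holds only on some Zariski-open set, so the final open set must be $\cU_{d_1,d_2}$ intersected with that set (nonempty by irreducibility), not $\cU_{d_1,d_2}$ itself. Second, $K_{\bP^2}+D_a$ is positive as soon as $d_1+d_2\geq 4$, so log general type is not what singles out the listed degree ranges.
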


\subsection{Strategy}
One can build a relative twisted symmetric differential form on the universal family of surfaces by using invariant jet-differentials. For general parameters, the eigen-directions of this twisted symmetric differential form a multi-foliation. On the projectivized tangent bundle, the twisted symmetric differential defines a divisor along which the multi-foliation becomes a genuine foliation. By construction, rational and elliptic curves on a surface lift to curves tangent to this induced family of foliations. For these foliations, we solve the Poincaré problem by giving a uniform bound for degrees of invariant curves. Then classical arguments with Hilbert schemes turn such a degree bound into our desired Zariski-open condition. 

The present work is part of a broader effort, in the spirit initiated by McQuillan, to integrate foliation theory more deeply into the study of complex hyperbolicity. 

\subsection{Plan}
In Section~\ref{sec:single-symmetric-differential}, we recall McQuillan's construction of a foliation induced by twisted symmetric differentials. In Section~\ref{sec:family-hypersurfaces}, we carry out the relative version of this construction over the parameter space of surfaces and establish the necessary preparatory results. In Section~\ref{sec:multi-foliation}, we review the construction of twisted symmetric differentials from invariant $2$-jet differentials. The Poincar\'e-type degree bound used in the argument is proved in Section~\ref{sec:poincare}, which may be read independently of the rest of the paper. The proofs of Th.~\ref{mainthm:kobayashi-hypersurface}, Th.~\ref{mainthm:main-algebraic-degeneracy}, Th.~\ref{mainthm:no-rational-curves} and Th.~\ref{mainthm:hypersurface-hyperbolicity} are given in Section~\ref{sec:main-proofs-hypersurfaces} while those of Th.~\ref{mainthm:plane-pair-complement} and Th.~\ref{mainthm:plane-complement-hyperbolicity} are given in Section~\ref{sec:curve-complements}.

\section{From Symmetric Differentials to Foliations}\label{sec:single-symmetric-differential}

Let $X$ be a smooth projective surface. Following McQuillan's construction~\cite{McQuillan1998,McQuillan1999}, we explain how a non-zero twisted symmetric differential form 
\[
\omega\in\coh^0(X,\Sym^m\Omega^1\otimes L)
\]
induces a holomorphic foliation on certain auxiliary surface. We will carry out this construction in family in \S~\ref{sec:family-hypersurfaces}. Though the exposition in \S~\ref{sec:family-hypersurfaces} is essentially self-contained, we explain in detail the case of a single surface in this section for illustration. 

The idea of the construction is as follows. Trivializing \(L\) and working away from the zero locus and the discriminant of the reduced polynomial in the tangent directions, one may factor locally analytically \(\omega=u\prod_{i=1}^r\alpha_i^{a_i}\), where \(u\) is invertible, the holomorphic \(1\)-forms \(\alpha_i\) are nowhere zero and pairwise nonproportional, and \(a_i\geq 1\), \(\sum_i a_i=m\). The equations \(\alpha_i(v)=0\) define local branches of one holomorphic multi-foliation.

\subsection{The projectivized tangent bundle}

Let \(\pi:\mathbb P(T_{X})\to X\) be the bundle of lines in the tangent spaces of $X$. A point of $\mathbb P(T_{X})$ is a pair $(x,\ell)$, where $x\in X$ and $\ell\subset T_{X,x}$ is a $1$-dimensional $\mathbb{C}$-vector subspace. We have the tautological exact sequence  \begin{equation}\label{eq:tautological-exact-sequence-tangent-bundle}
0\longrightarrow \mathcal O_{\mathbb P(T_X)}(-1) \longrightarrow \pi^*T_X \longrightarrow \mathcal Q \longrightarrow 0, 
\end{equation}
where the fiber of $\mathcal O_{\mathbb P(T_X)}(-1)$ at $(x,\ell)$ is the line $\ell$ and $\mathcal Q$ is a line bundle. Taking determinants in \eqref{eq:tautological-exact-sequence-tangent-bundle} we get 
\[ 
\mathcal Q \simeq \mathcal O_{\mathbb P(T_X)}(1)\otimes\pi^*\det(T_X) = \mathcal O_{\mathbb P(T_X)}(1)\otimes\pi^*K_X^{-1}. 
\] 
The pullback of $\omega$ defines a morphism 
\[ 
\Sym^m\mathcal O_{\mathbb P(T_X)}(-1) = \mathcal O_{\mathbb P(T_X)}(-m) \longrightarrow \pi^*L. 
\] 
Equivalently, it defines a section 
\[ 
s_\omega\in \coh^0\!\left( \mathbb P(T_X), \mathcal O_{\mathbb P(T_X)}(m)\otimes\pi^*L \right). 
\] 
We denote its zero scheme by \(D_\omega:=Z(s_\omega)\subset\mathbb P(T_X)\).
Since $\omega\neq 0$, the section $s_\omega$ is not identically zero. As $\mathbb P(T_X)$ is smooth and integral, $D_\omega$ is an effective Cartier divisor.

\subsection{The canonical contact structure}

Composing the differential of $\pi$ with the tautological quotient defines a surjective morphism of vector bundles 
\[ 
\theta:T_{\bP(T_X)} \xrightarrow{\,d\pi\,} \pi^*T_X \longrightarrow \mathcal Q. 
\] 
In other words, \(\theta\in \coh^0\!\left( \mathbb P(T_X), \Omega^1_{\mathbb P(T_X)}\otimes\mathcal Q \right)\). 
At a point $(x,\ell)\in\mathbb P(T_X)$, it is given by 
\[ 
\theta_{(x,\ell)}(\xi) = d\pi(\xi)\bmod\ell \quad \in T_{X,x}/\ell = \mathcal Q_{(x,\ell)}. 
\]
We verify the contact condition locally analytically. Let $(x,y)$ be local analytic coordinates on $X$, and write a tangent direction as 
\[ 
[p:q]=[p\partial_x+q\partial_y]. 
\] 
On the affine chart $q\neq 0$, denote $u=p/q$. The tautological line is generated by \(u\partial_x+\partial_y\), and the class of $\partial_x$ is a local frame of $\mathcal Q$. If some $\xi$ satisfies $d\pi(\xi)=a\partial_x+b\partial_y$, then 
\[ 
d\pi(\xi)\bmod \langle u\partial_x+\partial_y\rangle = (a-ub)[\partial_x]. 
\] 
With respect to the local frame $[\partial_x]$ of $\mathcal Q$, the form $\theta$ satisfies therefore 
\[ 
\theta_{\mathrm{loc}}=dx-u\,dy, \quad
d\theta_{\mathrm{loc}}=-du\wedge dy, \quad
\theta_{\mathrm{loc}}\wedge d\theta_{\mathrm{loc}}=-dx \wedge du \wedge dy.
\] 
Thus $\theta_{\mathrm{loc}}\wedge d\theta_{\mathrm{loc}}$ is nowhere vanishing, and \(\mathcal H:=\ker(\theta)\subset T_{\mathbb P(T_X)}\) is a rank-$2$ contact distribution on $\mathbb P(T_X)$.

\subsection{The characteristic foliation}

Let $Y$ be a reduced irreducible component of $D_\omega$ such that
$\pi(Y)=X$, and let \(\nu:\widetilde Y\to Y\) be its normalization. Denote by
\[
f:\widetilde Y\xrightarrow{\nu}Y
\hookrightarrow\mathbb P(T_X)
\]
the induced morphism, and set \(g:=\pi\circ f:\widetilde Y\to X\). 
Since $g$ is a dominant morphism between irreducible surfaces, it is
generically finite. As the ground field has characteristic zero, $dg$
is an isomorphism at the generic point of $\widetilde Y$.

Pulling back the contact form and composing with the differential of
$f$, we obtain a morphism
\[
\varphi:
T_{\widetilde Y}
\xrightarrow{\,df\,}
f^*T_{\mathbb P(T_X)}
\xrightarrow{\,f^*\theta\,}
f^*\mathcal Q.
\]
Also, $\varphi$ is the composition
\[
T_{\widetilde Y}
\xrightarrow{\,dg\,}
g^*T_X
\longrightarrow
f^*\mathcal Q.
\]
It is surjective at the generic point. Define \(\mathcal F:=\ker(\varphi)\subset T_{\widetilde Y}\). Then $\mathcal F$ has generic rank $1$, and
\[
\mathcal Q_{\mathcal F}:=T_{\widetilde Y}/\mathcal F
\simeq
\operatorname{im}(\varphi)
\subset f^*\mathcal Q
\]
is torsion-free because it is a subsheaf of a line bunlde. Thus $\mathcal F$ is saturated in $T_{\widetilde Y}$.

The tangent sheaf $T_{\widetilde Y}$ is reflexive by \cite[Prop.~1.1]{HartshorneReflexive}. Then \cite[Prop.~1.1]{HartshorneReflexive} yields, locally on $\widetilde Y$, an exact sequence
\[
0\longrightarrow
T_{\widetilde Y}
\longrightarrow
\mathcal V
\longrightarrow
\mathcal G
\longrightarrow 0,
\]
where $\mathcal V$ is locally free and $\mathcal G$ is torsion-free.
The inclusions \(\mathcal F\subset T_{\widetilde Y}\subset\mathcal V\) 
induce an exact sequence
\[
0\longrightarrow
\mathcal Q_{\mathcal F}
\longrightarrow
\mathcal V/\mathcal F
\longrightarrow
\mathcal G
\longrightarrow 0.
\]
Since an extension of torsion-free sheaves on an integral scheme is
torsion-free, $\mathcal V/\mathcal F$ is torsion-free. Applying again
\cite[Prop.~1.1]{HartshorneReflexive} to
\[
0\longrightarrow
\mathcal F
\longrightarrow
\mathcal V
\longrightarrow
\mathcal V/\mathcal F
\longrightarrow 0
\]
we deduce that $\mathcal F$ is reflexive.

Recall that \(T_{\widetilde Y}=\operatorname{Der}_{\mathbb C}
(\mathcal O_{\widetilde Y},\mathcal O_{\widetilde Y})\). Since $\mathcal F\subset T_{\widetilde Y}$, the Lie bracket of two local sections of $\mathcal F$ is a local section of $T_{\widetilde Y}$. Moreover, for local functions $a,b$ and local
sections $\xi,\eta$ of $\mathcal F$, one has
\[
[a\xi,b\eta]
=
ab[\xi,\eta]+a\xi(b)\eta-b\eta(a)\xi.
\]
As the last two terms lie in $\mathcal F$, the pairing \((\xi,\eta)\mapsto[\xi,\eta]\bmod\mathcal F\) is alternating and $\mathcal O_{\widetilde Y}$-bilinear. It therefore
induces an $\mathcal O_{\widetilde Y}$-linear morphism
\begin{equation}\label{eq:lie-bracket-morphism}
{\bigwedge}^2\mathcal F
\longrightarrow
T_{\widetilde Y}/\mathcal F.
\end{equation}
Since $\mathcal F$ has generic rank $1$, the sheaf
$\bigwedge^2\mathcal F$ is torsion. Recall that 
$T_{\widetilde Y}/\mathcal F$ is torsion-free. Hence the morphism \eqref{eq:lie-bracket-morphism}
vanishes. Thus $\mathcal F$ is closed under the Lie bracket and defines
a saturated rank-$1$ foliation on $\widetilde Y$.

Finally, let $U\subset X$ be an analytic open subset on
which $\omega=u\prod_{i=1}^r\alpha_i^{a_i}$ with the $\alpha_i$
nowhere zero and pairwise nonproportional. As a divisor,
$D_\omega|_{\pi^{-1}(U)}=\sum_{i=1}^r a_i\sigma_i(U)$, and its
support is the union of the graphs 
\[
\sigma_i:U\to\bP(T_X),
\quad
\sigma_i(x)=\ker(\alpha_i)_x.
\]
If we identify the graph of $\sigma_i$ with $U$, then the
restriction of $\varphi$ is the natural quotient morphism \(T_U\to T_U/\ker(\alpha_i)\). Therefore the restriction of $\mathcal F$ to this local chart is $\ker(\alpha_i)$. In summary, the foliations constructed on the normalizations of the components of $D_\omega$ recover the local branches of the $m$-multi-foliation defined by $\omega$.

\section{Families of Surfaces With Multi-foliations}\label{sec:family-hypersurfaces}
In this section, let $\cP$ be a smooth complex quasi-projective variety and let $p:\cX\to\cP$ be a smooth projective morphism with smooth 
surface fibers.  Let $\aH$ be a $p$-ample line bundle.  For $t\in\cP$, we
denote the fiber by $X_t$ and the restriction of $\aH$ by $H_t$.

Let $m\geq 1$, let $\aL$ be a line bundle on $\cX$. Consider a nonzero twisted relative symmetric differential on the total space
\[
\boldsymbol\omega\in
\coh^0\bigl(\cX,\Sym^m\Omega^1_{\cX/\cP}\otimes\aL\bigr)
\]
whose restriction on $X_t$ is a twisted symmetric differential form 
\[
\omega_t:=\boldsymbol\omega\vert_{X_t}\in \coh^0\bigl(X_t,\Sym^m\Omega^1\otimes L_t\bigr).
\]
We assume that $\omega_t$ is nonzero for every $t\in\cP$. This assumption can be always achieved up to replacing $\cP$ by a nonempty Zariski-open subset.

\subsection{The spectral divisor}\label{subsec:spectral-divisor}

Let
\[
\Pi:\bP\bigl(T_{\cX/\cP}\bigr)\longrightarrow \cX
\]
be the relative projectivized tangent bundle. Denote by $\aO_{\bP}(1)$ the hyperplane line bundle (i.e., dual of the tautological line bundle $\aO_{\bP}(-1)$).

Fiberwise over a point $x\in X_t$, the symmetric differential $\omega_t(x)$ is a homogeneous polynomial of degree $m$ on $T_{X_t,x}$, equivalently an element of $\aO_{\bP(T_{X_t,x})}(m)$.
Globally, $\boldsymbol\omega$ induces a canonical section
\[
\boldsymbol s_{\boldsymbol\omega}\in \coh^0\Bigl(\bP\bigl(T_{\cX/\cP}\bigr),\,
\aO_{\bP}(m)\otimes \Pi^*\aL\Bigr).
\]
Define the \emph{spectral divisor}:
\[
\cD := \Div(\boldsymbol s_{\boldsymbol\omega}) \subset \bP\bigl(T_{\cX/\cP}\bigr).
\]
It is an effective Cartier divisor. It is flat over a non-empty open subset of $\cP$ because flatness is a generic property~\cite[Th.~6.9.1]{EGAIV}. Its restriction to a fiber is called the \emph{spectral divisor defined by $\omega_t$}:
\[
D_{\omega_t}\subset\bP(T_{X_t}).
\]

\subsection{Base locus}\label{sec:horizontal-spectral-divisor}
Let $\cZ\subset \cX$ be the zero locus of $\boldsymbol{\omega}$. For $t\in\cP$, the intersection
\[
Z(\omega_t):=\cZ\cap X_t \subset X_t
\]
is the base locus of $\omega_t$. By construction, $x\in Z(\omega_t)$ if and only if $\omega_t(x)=0$ in $\Sym^m\Omega^1_{X_t,x}\otimes L_{t,x}$. 

\begin{lem}\label{lem:base-locus-semicontinuity}
For every integer $k\geq 0$, the locus
\[
\Sigma_{\geq k}:=\{t\in\cP\mid \dim Z(\omega_t)\ge k\}
\]
is a Zariski closed subset of $\cP$. In particular,
\[
\Sigma:=\Sigma_{\ge 1}=\{t\in\cP\mid Z(\omega_t)\ \text{contains a curve}\}
\]
is Zariski closed.
\end{lem}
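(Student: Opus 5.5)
This follows from upper semicontinuity of fiber dimension (Chevalley's theorem, \cite[Th.~13.1.3]{EGAIV}) applied to the proper morphism $q:=p|_{\cZ}:\cZ\to\cP$. First I will check that $\cZ$ is a closed subscheme of $\cX$ whose fibers are exactly the base loci. The sheaf $\Sym^m\Omega^1_{\cX/\cP}\otimes\aL$ is locally free, so the zero scheme of $\boldsymbol\omega$ is cut out locally by the coefficients of $\boldsymbol\omega$ in a local frame. Formation of $\Sym^m\Omega^1_{\cX/\cP}$ commutes with base change to fibers, since $p$ is smooth and $\Omega^1_{\cX/\cP}|_{X_t}=\Omega^1_{X_t}$. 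Hence the fiber $\cZ\cap X_t=q^{-1}(t)$ is set-theoretically $Z(\omega_t)$. Moreover, $q$ is proper, because $p$ is projective and $\cZ\subset\cX$ is closed.

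Next I will use the upper semicontinuity of fiber dimension on the source. For each $k$, the set
\[
\cZ_{\ge k}:=\{z\in\cZ \mid \dim_z q^{-1}(q(z))\ge k\}
\]
is Zariski closed in $\cZ$. We have $\dim Z(\omega_t)\ge k$ exactly when some point $z$ of $q^{-1}(t)$ has local fiber dimension at least $k$, so
\[
\Sigma_{\ge k}=q(\cZ_{\ge k}).
\]
Since $q$ is proper, this image is closed in $\cP$. The case $k=1$ gives that $\Sigma$ is closed, because a nonempty closed subset of the surface $X_t$ of dimension at least $1$ contains a curve. For $k=0$ the statement says that $q(\cZ)$ is closed, which is again properness. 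If $Z(\omega_t)$ is empty we use the convention $\dim\emptyset=-\infty$, which is consistent with all of the above.

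The only point that needs care is to use the version of Chevalley's theorem that holds without flatness, namely upper semicontinuity of $z\mapsto\dim_z q^{-1}(q(z))$ on the source, combined with properness to push the closed set down. I should not invoke semicontinuity of $t\mapsto\dim q^{-1}(t)$ on the base directly, because that requires properness and is exactly what we are deriving. No other obstacle is expected.
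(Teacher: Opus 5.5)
Your proof is correct and follows the paper's argument. The paper simply cites EGA IV Cor.~13.1.5, which gives upper semicontinuity of $t\mapsto\dim q^{-1}(t)$ for the proper (indeed projective) morphism $\cZ\to\cP$; your combination of Chevalley's theorem on the source with properness to push the closed locus down is exactly how that corollary is proved, so since $q$ is proper you could equally have cited it directly, as the paper does.
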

\begin{proof}
Note that $\cZ$ is closed in $\cX$ and the morphism $\cZ\to\cP$ is projective. Then it suffices to apply \cite[Cor.~13.1.5]{EGAIV}.
\end{proof}

\begin{lem}\label{lem:uniform-base-locus-degree}
Up to replacing $\cP$ by a nonempty Zariski-open subset, there is an integer $\tau_Z\geq 0$ such that \(H_t\cdot C\leq\tau_Z\) for every $t\in\cP$ and every curve $C\subset Z(\omega_t)$.
\end{lem}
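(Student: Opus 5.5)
The plan is to bound the degree of curves in the base locus uniformly, using the family $\cZ\to\cP$. Let $\cZ_1\subset\cZ$ be the union of those irreducible components of $\cZ$ (with reduced structure) whose fibers over $\cP$ have dimension at least one. The first step is to note that every curve $C\subset Z(\omega_t)$ is an irreducible component of the one-dimensional part of $(\cZ\cap X_t)_{\mathrm{red}}$, or lies inside it. Since $\omega_t\neq 0$ for all $t$, no fiber $Z(\omega_t)$ equals $X_t$, so $\dim Z(\omega_t)\le 1$ for all $t$.

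Next we shrink $\cP$ and use generic flatness \cite[Th.~6.9.1]{EGAIV}, applied to each of the finitely many reduced irreducible components $\cZ^{(j)}$ of $\cZ$. This gives a nonempty Zariski-open subset of $\cP$ over which each $\cZ^{(j)}\to\cP$ is either flat or has empty image. Over this open set, the fibers $\cZ^{(j)}_t$ of a flat projective family have a constant Hilbert polynomial with respect to the relatively ample bundle $\aH$. In particular, for the components of relative dimension one, the degree $\aH|_{\cZ^{(j)}_t}\cdot[\cZ^{(j)}_t]$ of the one-cycle is a constant $e_j$. Components of relative dimension zero contribute no curves.

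Let $C\subset Z(\omega_t)$ be a curve. Then $C$ is contained set-theoretically in $\bigcup_j\cZ^{(j)}_t$, so $C$ is an irreducible component of $(\cZ^{(j)}_t)_{\mathrm{red}}$ for some $j$ with $\dim\cZ^{(j)}_t=1$. The fundamental cycle $[\cZ^{(j)}_t]$ is an effective one-cycle containing $C$ with multiplicity at least one, and $H_t$ is ample on $X_t$. Hence
\[
H_t\cdot C\le H_t\cdot[\cZ^{(j)}_t]=e_j .
\]
Setting $\tau_Z:=\max_j e_j$, and taking $\tau_Z=0$ if there are no such components, proves the lemma.

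The only delicate point is the compatibility between the set-theoretic fiber and the scheme-theoretic fiber of a flat family. The Hilbert polynomial computes the degree of the scheme fiber, and each irreducible curve of its support appears with positive multiplicity. Ampleness of $H_t$ then makes all the other terms nonnegative, which gives the inequality. An alternative that avoids flatness is to use finiteness of the relative Hilbert scheme components covering $\cZ$. The flattening route above seems the most direct.
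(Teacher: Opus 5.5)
Your proof is correct and follows essentially the same route as the paper. Both arguments shrink the base by generic flatness, use constancy of the Hilbert polynomial to get a constant $H_t$-degree for the one-dimensional part of the fibers, and note that any curve $C\subset Z(\omega_t)$ appears with positive multiplicity in that effective one-cycle. The only cosmetic difference is this: you flatten each irreducible component separately and take the maximum degree, and flatness then automatically forces components with zero-dimensional generic fiber to have zero-dimensional fibers. The paper instead flattens the reduced union $\cZ^{(1)}$ of the components with one-dimensional generic fiber, and removes the remaining components via semicontinuity of fiber dimension.
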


\begin{proof}
Let $\cZ\subset\cX$ be the zero scheme of $\boldsymbol\omega$. Since
$\omega_t$ is nonzero for every $t$, every fiber of
$\cZ\to\cP$ has dimension at most one. We remove the images of the irreducible components of $\cZ$ that do not dominate $\cP$. For every remaining component whose generic fiber is zero-dimensional, Lemma~\ref{lem:base-locus-semicontinuity} allows us to remove the locus where its fiber has positive dimension. 
Let $\cZ^{(1)}$ be the reduced union of the remaining components whose
generic fibers have dimension one. By \cite[Th.~6.9.1]{EGAIV} we can assume, up to shrinking, that $\cZ^{(1)}\to\cP$ is flat. Then the
$H_t$-degree of $\cZ_t^{(1)}$ is our desired constant.
\end{proof}

\begin{lem}\label{lem:etale-base-change-irreducible-components}
Up to replacing $\cP$ with a non-empty open subset, there is a finite surjective étale morphism \(u:\cP^\circ\to\cP\) such that every irreducible component of the geometric generic fiber (over the closure of the function field) of the base change $\cP^\circ\times_{\cP}\cD$ is defined over $\bC(\cP^\circ)$.
\end{lem}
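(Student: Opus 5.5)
We will work at the generic point of $\cP$ and then spread out. Let $K=\bC(\cP)$ and $\bar K$ an algebraic closure, and consider the generic fiber $\cD_K\subset\bP(T_{\cX/\cP})_K$, a scheme of finite type over $K$. Its base change $\cD_{\bar K}$ has finitely many irreducible components $Y_1,\dots,Y_r$. Each $Y_j$ is cut out by finitely many polynomials with coefficients in $\bar K$. Hence there is a finite extension $K\subset K'$ over which all $Y_j$ are defined: there are closed subschemes $Y_j'\subset \cD_{K'}$ with $Y_j'\times_{K'}\bar K=Y_j$. We may take $K'/K$ finite separable, which is automatic in characteristic zero, and even Galois, though Galois is not needed.

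Next we realize $K'$ geometrically. Let $\cP'$ be the normalization of $\cP$ in $K'$. This is an integral variety with a finite dominant morphism $u':\cP'\to\cP$ whose function field is $K'$. Since $K'/K$ is separable, $u'$ is generically étale. By generic smoothness, or because the ramification locus is a proper closed subset, there is a nonempty Zariski-open $\cP_0\subset\cP$ over which $u'$ is finite and étale; for this one can use the openness of the étale locus together with the properness of $u'$ to push forward the complement. We replace $\cP$ by $\cP_0$ and set $\cP^\circ:=u'^{-1}(\cP_0)$ and $u:=u'|_{\cP^\circ}$. Then $u$ is finite, surjective and étale, and $\bC(\cP^\circ)=K'$. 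Since $\cP_0$ is smooth and $u$ is étale, $\cP^\circ$ is smooth quasi-projective, so it can play the role of a new base in later sections.

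Finally we check the defining property. The generic fiber of $\cP^\circ\times_{\cP}\cD\to\cP^\circ$ is $\cD_K\otimes_K K'=\cD_{K'}$. Its geometric generic fiber is $\cD_{\bar K}$, whose components $Y_j$ are by construction base changes of the $K'$-subschemes $Y_j'$. So every component is defined over $\bC(\cP^\circ)$.

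The only mildly delicate step is the choice of $K'$: a component is defined over a field of definition of its ideal, and the ideal is finitely generated. One could instead take the compositum of the fields of definition of the finitely many $Y_j$. Alternatively, one can use that the Galois group acts on the finite set of components, and take $K'$ to be the fixed field of the kernel of that action intersected with the fields needed for geometric irreducibility, by EGA IV, 4.5.11. All other steps are standard shrinking arguments.
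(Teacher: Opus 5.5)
Your proof is correct and follows essentially the paper's route: choose a finite extension $K'/K$ over which all components of the geometric generic fiber are defined, realize it by a finite dominant morphism onto $\cP$ that is étale at the generic point, and shrink $\cP$ by removing the closed image of the non-étale locus. The only differences are cosmetic. You use the normalization of $\cP$ in $K'$, whereas the paper uses $\Spec A[\alpha]$ for a primitive element $\alpha$ made integral by localizing. You also note, correctly, that the paper's Galois assumption on $K'/K$ is not needed.
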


\begin{proof}
Let $\eta$ be the generic point of $\cP$, let
\[
K:=\bC(\cP),
\qquad
\cQ:=\bP\bigl(T_{\cX/\cP}\bigr).
\]
We write the divisor induced by $\cD$ on the geometric generic fiber over $\overline{K}$ as
\[
\cD_\eta
=
\sum_{j=1}^{s} a_j E_{j,\eta}
+
\sum_{\ell=1}^{r} b_\ell F_{\ell,\eta},
\]
where the $E_{j,\eta}$ are the irreducible components whose
images under \(\Pi_\eta:\cQ_\eta\to X_\eta\) are dense in $X_\eta$, whereas the $F_{\ell,\eta}$ do not dominate $X_\eta$.

Consider a finite extension $K'/K$ over which all reduced supports of these components are defined. Since $\operatorname{char}K=0$, we may, after enlarging $K'$ if necessary, assume that $K'/K$ is finite Galois. Up to replacing $\cP$ with a nonempty affine open subset, we can and will assume that \(\cP=\Spec A\) where $A$ is an integral finitely generated $\bC$-algebra with fraction field $K$. By the primitive element theorem, there exists
$\alpha\in K'$ such that \(K'=K(\alpha)\). Up to replacing $A$ by a localization $A_f$, for some nonzero $f\in A$, we can and will assume that $\alpha$ is integral over $A$. Thus \(B:=A[\alpha]\subset K'\) is a finite integral $A$-algebra. It is an integral domain, and its fraction field is $K'$. The induced morphism
\[
u_0:\widetilde{\cP}:=\Spec B\to\cP
\]
is finite and surjective.

Let $\widetilde{\eta}$ be the generic point of $\widetilde{\cP}$. The
fiber of $u_0$ over $\eta$  consists of the single point $\widetilde{\eta}$. At this
point, $u_0$ corresponds to the morphism
\(\Spec K'\to\Spec K\). By \cite[Th.~17.6.1]{EGAIV} the map $u_0$ is \'etale at
$\widetilde{\eta}$. By the definition of a morphism being \'etale at a point
\cite[Def.~17.3.7]{EGAIV}, there exists an open neighborhood \(W\subset\widetilde{\cP}\) of $\widetilde{\eta}$ such that the restriction \(u_0|_W:W\to\cP\) is \'etale. Then the complement \(R:=\widetilde{\cP}\setminus W\) has closed image in $\cP$. Hence \(\cP_1:=\cP\setminus u_0(R)\) is a nonempty open subset of $\cP$. 

Define \(\cP^\circ:=u_0^{-1}(\cP_1)\). Then $\cP^\circ\subset W$, and the restriction \(u:\cP^\circ\to\cP_1\) is finite, étale and surjective. 
Replacing $\cP$ by $\cP_1$, we have therefore obtained a finite
surjective \'etale morphism \(u:\cP^\circ\to\cP\) whose extension of function fields is $K'/K$. In particular, after
this base change, every irreducible component of the geometric
generic fiber of $\cD$ is defined over $\bC(\cP^\circ)$.
\end{proof}

Let \(u:\cP^\circ\to\cP\) be the finite surjective étale morphism of Lemma~\ref{lem:etale-base-change-irreducible-components}. We define
\begin{equation}\label{eq:definition-Q}
\cX^\circ:=\cX\times_{\cP}\cP^\circ,
\quad
\cQ^\circ:=\bP\bigl(T_{\cX^\circ/\cP^\circ}\bigr),
\end{equation}
and denote by
\begin{equation}\label{eq:definition-Pi}
\Pi^\circ:\cQ^\circ\longrightarrow\cX^\circ
\end{equation}
the projection. Via the canonical isomorphism \(\cQ^\circ\simeq\bP\bigl(T_{\cX/\cP}\bigr)\times_{\cP}\cP^\circ\), we set 
\[
\cD^\circ:=\cD\times_{\cP}\cP^\circ
\]
to be the induced spectral divisor.

Let $\eta^\circ$ be the generic point of $\cP^\circ$. We can write the decomposition of the divisor on the generic fiber as \(\cD^\circ_{\eta^\circ}=\sum_{i=1}^{N}a_iE_{i,\eta^\circ}\) 
where the components are numbered so that \(E_{1,\eta^\circ},\ldots,E_{s,\eta^\circ}\) are those which dominate $X^\circ_{\eta^\circ}$ under
$\Pi^\circ_{\eta^\circ}$. For $1\leq i\leq N$, let \(\cY_i^\circ :=\overline{E_{i,\eta^\circ}}\) be the scheme-theoretic closure of $E_{i,\eta^\circ}$ in $\cQ^\circ$, and define
\[
\cD^{\circ,\mathrm{hor}}
:=
\sum_{j=1}^{s}a_j\cY_j^\circ.
\]
For $t\in\cP^\circ$, we denote by \(X_t^\circ,\cQ_t^\circ,D_t^\circ,Y_{i,t}^\circ\) the fibers of $\cX^\circ$, $\cQ^\circ$, $\cD^\circ$, and $\cY_i^\circ$, respectively.

\begin{lem}\label{lem:horizontal-spectral-divisor-family}
Up to replacing $\cP$ by a non-empty open subset and $\cP^\circ$ by its inverse image under $u$, the following assertions hold.
\begin{enumerate}
\item[(i)]
The cycle $\cD^{\circ,\mathrm{hor}}$ is an effective Cartier divisor
on $\cQ^\circ$ and is flat over $\cP^\circ$. For every
$1\leq j\leq s$, the morphism \(\cY_j^\circ\to\cP^\circ\) is flat with integral fibers.
\item[(ii)]
For every $1\leq j\leq s$, the restriction \(\Pi^\circ|_{\cY_j^\circ}:
\cY_j^\circ\to\cX^\circ\) is surjective.
\item[(iii)]
For every $t\in\cP^\circ$, the divisors \(Y_{1,t}^\circ,\cdots,Y_{s,t}^\circ\) are pairwise distinct and are the irreducible components of
$D_t^\circ$ which dominate $X_t^\circ$ under $\Pi_t^\circ$.
\item[(iv)]
For every $t\in\cP^\circ$, the morphisms \(\Pi_t^\circ|_{Y_{j,t}^\circ}:
Y_{j,t}^\circ\to X_t^\circ\) are dominant, generically finite, and finite over \(X_t^\circ\setminus Z(\omega_{u(t)})\). They also satisfy
\[
\sum_{j=1}^{s}
a_j\deg\bigl(\Pi_t^\circ|_{Y_{j,t}^\circ}\bigr)=m
\]
where $m$ is the symmetric degree of $\boldsymbol\omega$. In particular, \(\deg\bigl(\Pi_t^\circ|_{Y_{j,t}^\circ}\bigr)
\leq m\) for every $j$.
\end{enumerate}
\end{lem}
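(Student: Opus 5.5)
The plan is to spread properties of the generic fiber to an open set of parameters, using the standard "generic properties spread out" results of EGA~IV. We shrink $\cP$ finitely many times; each time we replace $\cP^\circ$ by its preimage under $u$. Because $u$ is finite étale and surjective, the image under $u$ of a proper closed subset of $\cP^\circ$ is a proper closed subset of $\cP$. So any open condition holding at $\eta^\circ$ can be realized by shrinking $\cP$.

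\textbf{Step 1: parts (i) and (ii).} Each $\cY_j^\circ$ is the closure of a component defined over $\bC(\cP^\circ)$, which is possible by Lemma~\ref{lem:etale-base-change-irreducible-components}. Hence $\cY_j^\circ$ is integral.
\begin{itemize}
\item Its generic fiber $E_{j,\eta^\circ}$ is geometrically integral: it is a component of the geometric generic fiber defined over the base field.
\item By generic flatness \cite[Th.~6.9.1]{EGAIV}, $\cY_j^\circ$ is flat over a dense open subset.
\item By \cite[Th.~9.7.7]{EGAIV} (geometric integrality of fibers is constructible and holds at the generic point), the fibers are integral over a dense open subset.
\end{itemize}
$\cQ^\circ$ is smooth over the smooth base $\cP^\circ$, hence regular, so each prime divisor $\cY_j^\circ$ is Cartier. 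Flatness of $\cD^{\circ,\mathrm{hor}}$ follows in the same way, or from the fact that a Cartier divisor not containing any fiber is flat over a smooth base once fibers are divisors.

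For (ii), $\Pi^\circ(\cY_j^\circ)$ is closed because $\Pi^\circ$ is proper. It contains the generic fiber $X^\circ_{\eta^\circ}$, since $E_{j,\eta^\circ}$ dominates it. Therefore it contains a closed set dominating $\cX^\circ$, and so it equals $\cX^\circ$ because $\cX^\circ$ is irreducible. The same argument gives $\Pi^\circ_t(Y_{j,t}^\circ)=X_t^\circ$ for every $t$.

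\textbf{Step 2: part (iii).} Apply Step 1 to all components $\cY_i^\circ$, $1\le i\le N$, including the non-dominating ones. For $i>s$, the image $\Pi^\circ(\cY_i^\circ)$ is a closed subset whose generic fiber has dimension $\le1$. By Lemma~\ref{lem:base-locus-semicontinuity}, or \cite[Cor.~13.1.5]{EGAIV} applied to this image, we can shrink so that its fibers have dimension $\le 1$. Then $Y_{i,t}^\circ$ does not dominate $X_t^\circ$.

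The equality $\cD^\circ=\sum_i a_i\cY_i^\circ$ holds at the generic fiber. Hence it holds on $\Pi^{\circ-1}$ of an open subset: the difference is a Cartier divisor supported over a proper closed subset of $\cP^\circ$, and we remove that subset. Restricting to fibers then gives $D_t^\circ=\sum a_iY_{i,t}^\circ$ with integral $Y_{i,t}^\circ$.

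It remains to make the $Y_{i,t}^\circ$ pairwise distinct. For $i\ne j$, the set $\cY_i^\circ\cap\cY_j^\circ$ has relative dimension $<2$ generically. By upper semicontinuity of fiber dimension, it has fiber dimension $\le1$ after shrinking, so $Y_{i,t}^\circ\ne Y_{j,t}^\circ$.

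\textbf{Step 3: part (iv), the main obstacle.} Dominance is (ii). Away from $Z(\omega_{u(t)})$, the divisor $D_t^\circ$ meets each $\bP^1$-fiber of $\Pi^\circ_t$ in a degree-$m$ divisor: $\omega_t(x)\ne0$, so $s_{\omega}$ does not vanish on the whole fiber. Thus $D_t^\circ\to X_t^\circ$ is quasi-finite and proper there, hence finite, and so is each $Y_{j,t}^\circ$. Generic finiteness follows because $Z(\omega_t)$ has dimension $\le1$.

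For the degree formula, take a general point $x\in X_t^\circ\setminus Z(\omega_{u(t)})$ that also avoids the $\le1$-dimensional images of the $Y_{i,t}^\circ$ with $i>s$. The fiber $\Pi_t^{\circ-1}(x)\cong\bP^1$ meets $D_t^\circ$ in a degree-$m$ subscheme, and this subscheme equals $\sum_{j\le s}a_j\,Y_{j,t}^\circ|_{\bP^1_x}$. Over the open set where $Y_{j,t}^\circ\to X_t^\circ$ is finite flat, the length of $Y_{j,t}^\circ\cap\bP^1_x$ equals $\deg(\Pi_t^\circ|_{Y_{j,t}^\circ})$, by generic flatness on the integral surface $Y_{j,t}^\circ$. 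Summing gives $\sum_j a_j\deg=m$.

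The delicate point is ensuring that multiplicities and intersections of the divisors restrict correctly to each fiber $t$, not merely to a general one. Steps 1 and 2 handle this: they establish fiberwise integrality, distinctness, and the identity $D_t^\circ=\sum a_iY_{i,t}^\circ$ uniformly in $t$. Once those hold, the computation above goes through for every $t$.
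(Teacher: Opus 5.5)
Your proposal is correct and follows essentially the same route as the paper. The shared steps are: generic flatness plus spreading out geometric integrality of the fibers; the bijection between dominating components and components of the generic fiber, with matching multiplicities; upper semicontinuity of fiber dimension, used both for distinctness and for the images of the non-dominating components; and a length count on a general $\bP^1$-fiber for the degree identity. Two further points in your favour: you explicitly supply the quasi-finite-plus-proper argument for finiteness over $X_t^\circ\setminus Z(\omega_{u(t)})$, which the paper leaves implicit, and your citation of EGA~IV 9.7.7 (constructibility) is an equally valid substitute for the paper's 12.2.4(viii).
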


\begin{proof}
In this proof, ``up to shrinking'' will mean replacing $\cP$ with a
non-empty open subset and $\cP^\circ$ with the corresponding preimage. 
Note that $\cQ^\circ$ is a
projective bundle over $\cX^\circ$, thus smooth over
$\bC$. 

By Lemma~\ref{lem:etale-base-change-irreducible-components}, every
irreducible component of
\(
\cD^\circ_{\eta^\circ}
\)
is geometrically irreducible. 
Consequently, every $E_{i,\eta^\circ}$ is geometrically integral.

The union of irreducible components of $\cD^\circ$ which do not dominate $\cP^\circ$ has proper closed image in $\cP^\circ$. Therefore up to shrinking wa can assume that every irreducible component of $\cD^\circ$ dominates $\cP^\circ$. There is a bijection between
the irreducible components of $\cD^\circ$ and those of
$\cD^\circ_{\eta^\circ}$. 
In particular \(\cY_1^\circ,\ldots,\cY_N^\circ\) are precisely the irreducible components of $\cD^\circ$. Each
$\cY_i^\circ$ is a prime Weil divisor on the smooth variety $\cQ^\circ$.
By \cite[Prop.~II.6.11]{HartshorneGTM52} it is an effective Cartier divisor. The multiplicity of $\cD^\circ$ along
$\cY_i^\circ$ equals the multiplicity of
$\cD^\circ_{\eta^\circ}$ along $E_{i,\eta^\circ}$. In particular, we have
\[
\cD^\circ
=
\sum_{i=1}^{N}a_i\cY_i^\circ
\]
as Cartier divisors on $\cQ^\circ$. Define
\[
\cD^{\circ,\mathrm{ver}}
:=
\sum_{i=s+1}^{N}a_i\cY_i^\circ,
\]
so \(\cD^\circ=\cD^{\circ,\mathrm{hor}}+\cD^{\circ,\mathrm{ver}}\).
As flatness is a generic property~\cite[Th.~6.9.1]{EGAIV}, up to shrinking we can and will assume that the morphisms \(\cD^{\circ,\mathrm{hor}}\to\cP^\circ, \cD^{\circ,\mathrm{ver}}\to\cP^\circ, \cY_i^\circ\to\cP^\circ,1\leq i\leq N\) are flat.

For $1\leq j\leq s$, by \cite[Th.~12.2.4(viii)]{EGAIV}, up to shrinking, every fiber $Y_{j,t}^\circ$ of the morphism \(\cY_j^\circ\to\cP^\circ\) is geometrically integral. This proves (i).

For $j\neq k$, the generic fiber of \(\cY_j^\circ\cap\cY_k^\circ
\to\cP^\circ\) has dimension at most one because $E_{j,\eta^\circ}$ and $E_{k,\eta^\circ}$ are distinct prime divisors in the threefold $\cQ^\circ_{\eta^\circ}$. This morphism is projective. Hence, by the upper semicontinuity of fiber dimension
\cite[Cor.~13.1.5]{EGAIV}, up to shrinking, \(\dim\bigl(Y_{j,t}^\circ\cap Y_{k,t}^\circ\bigr)\leq 1\) for every $t\in\cP^\circ$. In other words the divisors \(Y_{1,t}^\circ,\ldots,Y_{s,t}^\circ\) are pairwise distinct.

Consider
\[
\cB^\circ
:=
\Pi^\circ\bigl(
\operatorname{Supp}(\cD^{\circ,\mathrm{ver}})
\bigr)
\subset\cX^\circ.
\]
Then $\cB^\circ$ is
closed because $\Pi^\circ$ is projective. Its generic fiber is the union of the images \(\Pi^\circ_{\eta^\circ}(E_{i,\eta^\circ}), s<i\leq N.\)
By the definition of $s$, each of these images is a proper closed subset of the surface $X^\circ_{\eta^\circ}$. Thus \(\dim\cB^\circ_{\eta^\circ}\leq 1\). By applying again \cite[Cor.~13.1.5]{EGAIV}, we deduce that, up to shrinking, \(\dim \cB_t^\circ\leq 1\) for every $t\in\cP^\circ$. Hence no irreducible component of
$\cD^{\circ,\mathrm{ver}}_t$ dominates $X_t^\circ$.

For $1\leq j\leq s$, the image of \(\Pi^\circ|_{\cY_j^\circ}:
\cY_j^\circ\to\cX^\circ\) is closed. Its intersection with the generic fiber contains \(\Pi^\circ_{\eta^\circ}(E_{j,\eta^\circ})=X^\circ_{\eta^\circ}\). The generic fiber $X^\circ_{\eta^\circ}$ is dense in $\cX^\circ$ because $\cX^\circ\to\cP^\circ$ is smooth while $\cP^\circ$ is irreducible. The surjectivity of Item (ii) follows.

Since $\Pi^\circ|_{\cY_j^\circ}$ is a morphism over $\cP^\circ$, its
surjectivity implies that \(\Pi_t^\circ|_{Y_{j,t}^\circ}:
Y_{j,t}^\circ\to X_t^\circ\) is surjective for every $t$. As its source and target are both surfaces, it is generically finite.

The flatness of $\cD^{\circ,\mathrm{hor}}$ and
$\cD^{\circ,\mathrm{ver}}$ over $\cP^\circ$ allows us to restrict Cartier divisors to every fiber:
\[
D_t^\circ = \sum_{j=1}^{s}a_jY_{j,t}^\circ
+\cD^{\circ,\mathrm{ver}}_t.
\]
The divisors $Y_{j,t}^\circ$ are integral, pairwise distinct, and
dominate $X_t^\circ$, whereas every component of
$\cD^{\circ,\mathrm{ver}}_t$ has image contained in
$\cB_t^\circ$ and therefore does not dominate $X_t^\circ$. This proves
(iii).

Fix $t\in\cP^\circ$. There is a non-empty open subset of $X_t^\circ$ over which all the generically finite morphisms \(\Pi_t^\circ|_{Y_{j,t}^\circ}\) are finite and flat. Let \(x\) be a point in this open subset such that \(x\notin \cB_t^\circ\) and \(x\notin Z(\omega_{u(t)})\). 
Note that the fiber \(L_x:=(\Pi_t^\circ)^{-1}(x)\) 
is isomorphic to $\bP^1$. Since $x\notin Z(\omega_{u(t)})$, $L_x$ is not contained in $D_t^\circ$. Hence \(\deg\bigl(D_t^\circ|_{L_x}\bigr)=m\). Moreover, \(L_x\) does not meet
\(\cD^{\circ,\mathrm{ver}}_t\) because \(x\notin\cB_t^\circ\). For every \(j\), we have \(Y_{j,t}^\circ\cap L_x=\bigl(\Pi_t^\circ|_{Y_{j,t}^\circ}\bigr)^{-1}(x)\), and  therefore \(\length\bigl(Y_{j,t}^\circ\cap L_x\bigr)=\deg\bigl(\Pi_t^\circ|_{Y_{j,t}^\circ}\bigr)\). We then deduce \(m=\sum_{j=1}^{s}
a_j\deg\bigl(\Pi_t^\circ\vert_{Y_{j,t}^\circ}\bigr)\). Item (iv) follows.
\end{proof}

\begin{rem}
In the sequel of this section, we will always work after the shrinking and base change procedure of Lemma~\ref{lem:horizontal-spectral-divisor-family}.
\end{rem}

\subsection{Characteristic foliations}

Fix \(1\leq j\leq s\), and for ease of notation we will write
\[
\cY:=\cY_j^\circ,
\qquad
Y_t:=Y_{j,t}^\circ.
\]
The abusive omitting of the index $j$ is harmless for later need because there are only finitely many of them. 
Let \(\nu:\widetilde{\cY}\to \cY\) be the normalization, and let \(\rho:\cS\to\widetilde{\cY}\) be a resolution of singularities obtained by a sequence of blow-ups along smooth centers. In particular, \(\rho\) is a projective birational morphism and \(\cS\) is smooth. Recall that $\cQ^\circ,\Pi^\circ$ were defined in \eqref{eq:definition-Q}, \eqref{eq:definition-Pi}. We will denote
\[
h:\cS\xrightarrow{\rho}\widetilde{\cY}
\xrightarrow{\nu}\cY
\hookrightarrow\cQ^\circ,
\quad
r:=\Pi^\circ\circ h:\cS\longrightarrow\cX^\circ,
\]
and denote by \(q:\cS\to\cP^\circ\) the structural morphism.

\begin{lem}\label{lem:relative-resolution-family}
Up to replacing $\cP$ by a non-empty Zariski-open subset and
$\cP^\circ$ by its inverse image under $u$, the following hold. 
\begin{enumerate}
\item[(i)]
The morphism \(q:\cS\to\cP^\circ\) is smooth with integral fibers.
\item[(ii)]
For every \(t\in\cP^\circ\), the induced birational morphism \(h_t:S_t\to Y_t\) is a resolution of singularities.
\end{enumerate}
\end{lem}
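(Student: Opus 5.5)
Both assertions follow from generic smoothness and generic flatness over the base, applied to the finitely many morphisms $\cS\to\cP^\circ$, $\widetilde\cY\to\cP^\circ$ and $\cY\to\cP^\circ$, together with the fact that the blow-up centers can be taken to be flat over a dense open subset of the base.

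\textbf{Step 1: smoothness of $q$.} The variety $\cS$ is smooth over $\bC$. Since the ground field has characteristic zero, generic smoothness (\cite[Cor.~III.10.7]{HartshorneGTM52}) gives a nonempty open subset of $\cP^\circ$ over which $q$ is smooth. The morphism $q$ is dominant: it is the composite of $\cS\to\cY$, which is surjective, with $\cY\to\cP^\circ$, which is flat and hence surjective after the previous shrinkings. Moreover $\cS$ is integral, so its generic fiber $S_{\eta^\circ}$ is integral. This fiber is the resolution of $E_{j,\eta^\circ}$, which is geometrically integral by Lemma~\ref{lem:etale-base-change-irreducible-components}, so $S_{\eta^\circ}$ is also geometrically integral. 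Applying \cite[Th.~12.2.4(viii)]{EGAIV} as in the proof of Lemma~\ref{lem:horizontal-spectral-divisor-family} then makes every fiber $S_t$ integral after shrinking. Since the shrinking takes place on $\cP^\circ$, we use that $u$ is finite. The image of the removed closed set under $u$ is closed and proper in $\cP$, and we replace $\cP$ by its complement, as in Lemma~\ref{lem:horizontal-spectral-divisor-family}. This proves (i).

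\textbf{Step 2: birationality of $h_t$.} It remains to show that $h_t:S_t\to Y_t$ is proper and birational. It is projective, because $\rho$ and $\nu$ are. For birationality I will find a dense open subset $\cV\subset\cY$ over which $h$ is an isomorphism. Since $\rho\circ$ blow-ups is birational and $\nu$ is an isomorphism over the normal locus, take $\cV:=\cY_{\mathrm{reg}}$ minus the images of the blow-up centers and of the exceptional locus; this is a nonempty open set. Its complement $\cY\setminus\cV$ has generic fiber a proper closed subset of the integral surface $Y_{\eta^\circ}$, so $\dim(\cY\setminus\cV)_{\eta^\circ}\le 1$. By \cite[Cor.~13.1.5]{EGAIV}, after shrinking we get $\dim(\cY\setminus\cV)_t\le1$ for all $t$. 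Because $Y_t$ is an integral surface (Lemma~\ref{lem:horizontal-spectral-divisor-family}(i)), $\cV_t$ is dense in $Y_t$.

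Over $\cV_t$ the map $h_t$ is the base change of the isomorphism $h^{-1}(\cV)\to\cV$, hence an isomorphism. The set $h^{-1}(\cY\setminus\cV)$ is a proper closed subset of the integral variety $\cS$, so after a further shrinking its fibers have dimension at most one. Hence $h_t^{-1}(\cV_t)$ is dense in the integral surface $S_t$, and $h_t$ is birational. Combined with the smoothness of $S_t$ from Step 1, $h_t$ is a resolution of singularities, which proves (ii).

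\textbf{Main obstacle.} The real content is the compatibility of fiber dimensions in Step 2. One has to make sure that the exceptional and non-normal loci do not swallow a whole fiber $Y_t$ or $S_t$ for special $t$. This is handled by the semicontinuity of fiber dimension, applied to these finitely many closed subsets, together with the integrality of the fibers.
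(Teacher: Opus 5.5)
Your proposal is correct and follows essentially the same route as the paper. Part (i) uses generic smoothness together with \cite[Th.~12.2.4(viii)]{EGAIV}; part (ii) uses upper semicontinuity of fiber dimension on the complement of a dense open subset of $\cY$ over which $h$ is an isomorphism, with the shrinking passed from $\cP^\circ$ down to $\cP$ via the finiteness of $u$. Your extra shrinking to make $h_t^{-1}$ of that open set dense in $S_t$ is harmless but unnecessary, since $h_t$ is surjective and $S_t$ is already integral.
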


\begin{proof}
Let \(\eta^\circ\) be the generic point of \(\cP^\circ\). Since
\(h:\cS\to\cY\) is birational, the generic fiber \(S_{\eta^\circ}\) is geometrically integral. 
The varieties \(\cS\) and \(\cP^\circ\) are smooth over \(\bC\).
By \cite[Cor.~III.10.7]{HartshorneGTM52} we obtain a non-empty open subset of \(\cP^\circ\) over which \(q\) is smooth. By
\cite[Th.~12.2.4(viii)]{EGAIV}, this open subset may be chosen so that
all fibers of \(q\) are geometrically integral.

It remains to prove the fiberwise birationality of \(h\). Let \(V\subset\cY\) be a dense open subset such that \(h^{-1}(V)\to V\) is an isomorphism, and consider \(B:=\cY\setminus V\). Since \(V\) contains the generic point of \(\cY\), its generic fiber \(V_{\eta^\circ}\) is a dense open subset of the integral surface
\(Y_{\eta^\circ}\). Thus \(\dim B_{\eta^\circ}\leq 1\). The morphism \(B\to\cP^\circ\) is projective. By upper semicontinuity of
fiber dimension~\cite[Cor.~13.1.5]{EGAIV} we obtain a non-empty open subset over which \(\dim B_t\leq 1\). For every \(t\) in this open subset, \(V_t\) is dense in \(Y_t\), and \(h_t\) is an isomorphism over \(V_t\). Then \(h_t\) is projective, proper and birational while \(S_t\)
is smooth by Item~(i). 

Let \(U^\circ\subset\cP^\circ\) be the intersection of the open
subsets obtained above. Since \(u\) is finite, the image \(u(\cP^\circ\setminus U^\circ)\) is closed in \(\cP\). Its complement \(U\) is a non-empty Zariski-open subset of \(\cP\), and \(u^{-1}(U)\subset U^\circ\). Replacing \(\cP\) by \(U\) and \(\cP^\circ\) by \(u^{-1}(U)\) we prove the lemma.
\end{proof}

In the sequel, we replace \(\cP\) and \(\cP^\circ\) by the open
subsets furnished by Lemma~\ref{lem:relative-resolution-family}. We now carry out the construction of characteristic foliation in \S\ref{sec:single-symmetric-differential} in family.

Recall the morphism from \eqref{eq:definition-Pi}:
\[
\Pi^\circ:\cQ^\circ
=
\bP\bigl(T_{\cX^\circ/\cP^\circ}\bigr)
\longrightarrow\cX^\circ.
\]
Consider the relative tautological exact sequence
\[
0\longrightarrow
\aO_{\cQ^\circ}(-1)
\longrightarrow
(\Pi^\circ)^*T_{\cX^\circ/\cP^\circ}
\longrightarrow
\mathcal Q_{\mathrm{rel}}
\longrightarrow 0.
\]
Its quotient line bundle is
\[
\mathcal Q_{\mathrm{rel}}
\simeq
\aO_{\cQ^\circ}(1)\otimes
(\Pi^\circ)^*K_{\cX^\circ/\cP^\circ}^{-1}.
\]
Composing the relative differential of \(\Pi^\circ\) with the
tautological quotient we obtain the relative contact morphism
\[
\boldsymbol\theta:
T_{\cQ^\circ/\cP^\circ}
\longrightarrow
\mathcal Q_{\mathrm{rel}}.
\]
The relative differential of \(h\) induces a morphism
\[
\boldsymbol\varphi:
T_{\cS/\cP^\circ}
\xrightarrow{\,dh\,}
h^*T_{\cQ^\circ/\cP^\circ}
\xrightarrow{\,h^*\boldsymbol\theta\,}
h^*\mathcal Q_{\mathrm{rel}}
\]
which is equivalently the composition
\[
T_{\cS/\cP^\circ}
\xrightarrow{\,dr\,}
r^*T_{\cX^\circ/\cP^\circ}
\longrightarrow
h^*\mathcal Q_{\mathrm{rel}},
\]
where the second arrow is induced by the tautological quotient.

The morphism \(r_{\eta^\circ}:S_{\eta^\circ}\to X_{\eta^\circ}^\circ\) is dominant and generically finite. Its differential is an isomorphism at the generic point of \(S_{\eta^\circ}\). Hence
\(\boldsymbol\varphi\) has generic rank \(1\). 
Define
\[
T_{\cF}:=\ker(\boldsymbol\varphi)
\subset T_{\cS/\cP^\circ},
\qquad
\mathcal I:=\operatorname{im}(\boldsymbol\varphi)
\subset h^*\mathcal Q_{\mathrm{rel}}.
\]
Then \(T_{\cS/\cP^\circ}/T_{\cF}\simeq\mathcal I\). Since \(\mathcal I\) is a subsheaf of a line bundle on the integral variety \(\cS\), it is torsion-free. Thus \(T_{\cF}\) is a saturated
subsheaf of \(T_{\cS/\cP^\circ}\) of generic rank \(1\).

\begin{lem}\label{lem:characteristic-foliation-base-change}
Up to replacing $\cP$ by a non-empty Zariski-open subset and $\cP^\circ$ by its inverse image under $u$, for every
\(t\in\cP^\circ\) we have
\[
T_{\cF}|_{S_t}
=
\ker\bigl(
\boldsymbol\varphi_t:
T_{S_t}
\longrightarrow
h_t^*\bigl(\mathcal Q_{\mathrm{rel}}\vert_{\cQ_t^\circ}\bigr)
\bigr),
\]
and \(T_{\cF}\vert_{S_t}\) is a saturated rank-\(1\) subsheaf of \(T_{S_t}\).
\end{lem}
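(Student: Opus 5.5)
The plan is to compare the kernel of $\boldsymbol\varphi$ with the kernel of its restriction to the fibres, using generic flatness of the cokernel. We know that $q:\cS\to\cP^\circ$ is smooth (Lemma~\ref{lem:relative-resolution-family}). Hence $T_{\cS/\cP^\circ}$ is locally free of rank $2$ and commutes with base change: $T_{\cS/\cP^\circ}|_{S_t}=T_{S_t}$. Likewise $h^*\mathcal Q_{\mathrm{rel}}$ restricts to $h_t^*(\mathcal Q_{\mathrm{rel}}|_{\cQ^\circ_t})$, and $\boldsymbol\varphi|_{S_t}=\boldsymbol\varphi_t$, since relative differentials and the tautological sequence commute with base change.

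Next, consider the exact sequences
\[
0\to T_\cF\to T_{\cS/\cP^\circ}\to\mathcal I\to0,
\qquad
0\to\mathcal I\to h^*\mathcal Q_{\mathrm{rel}}\to\mathcal C\to 0 .
\]
By generic flatness \cite[Th.~6.9.1]{EGAIV}, applied to the coherent sheaves $\mathcal I$ and $\mathcal C$ on $\cS$, which is of finite type over $\cP^\circ$, after shrinking $\cP^\circ$ we may assume that $\mathcal I$ and $\mathcal C$ are flat over $\cP^\circ$. We then shrink $\cP$ to the complement of $u$ of the bad locus, exactly as at the end of Lemma~\ref{lem:relative-resolution-family}.

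Flatness of $\mathcal C$ gives $\mathrm{Tor}_1(\mathcal C,\aO_{S_t})=0$. So the second sequence stays exact on $S_t$, and $\mathcal I|_{S_t}\hookrightarrow h_t^*\mathcal Q_{\mathrm{rel}}$ is injective. Hence $\mathcal I|_{S_t}=\operatorname{im}(\boldsymbol\varphi_t)$. Flatness of $\mathcal I$ makes the first sequence stay exact on $S_t$. Therefore
\[
T_\cF|_{S_t}=\ker\bigl(T_{S_t}\to\mathcal I|_{S_t}\bigr)=\ker(\boldsymbol\varphi_t).
\]
Saturation follows because $T_{S_t}/T_\cF|_{S_t}\simeq \operatorname{im}\boldsymbol\varphi_t$ is a subsheaf of a line bundle on the integral surface $S_t$, so it is torsion-free.

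For the rank: $r_t:S_t\to X^\circ_t$ is generically finite and dominant by Lemma~\ref{lem:horizontal-spectral-divisor-family}(iv) and Lemma~\ref{lem:relative-resolution-family}(ii). In characteristic zero $dr_t$ is generically an isomorphism, so $\boldsymbol\varphi_t$ is generically surjective onto a line bundle. The kernel therefore has generic rank $2-1=1$.

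The main point to be careful about is the base-change step. A kernel need not commute with restriction unless the cokernel, and also the image inside the target, is flat; this is exactly why we impose flatness on both $\mathcal I$ and $\mathcal C$. One should also check that $S_t$ is the scheme-theoretic fibre and is integral. This is provided by Lemma~\ref{lem:relative-resolution-family}(i).
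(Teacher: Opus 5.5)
Your proposal is correct and follows the paper's proof essentially step for step. Both arguments use generic flatness of $\mathcal I$ and $\mathcal C$, with the shrinking transported through the finite map $u$, so that both exact sequences stay exact on $S_t$. Both use $T_{\cS/\cP^\circ}|_{S_t}\simeq T_{S_t}$ from smoothness of $q$, rank one from generic finiteness of $r_t$, and saturation from torsion-freeness of subsheaves of a line bundle. Your explicit Tor-vanishing justification only spells out what the paper leaves implicit.
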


\begin{proof}
Denote \(\mathcal C:= h^*\mathcal Q_{\mathrm{rel}}/\mathcal I\). As flatness is generic~\cite[Th.~6.9.1]{EGAIV}, there exists a non-empty Zariski-open subset
\(U^\circ\subset\cP^\circ\) such that \(\mathcal I|_{q^{-1}(U^\circ)}\) and \(\mathcal C|_{q^{-1}(U^\circ)}\) are flat over \(U^\circ\). Since \(u\) is finite, the subset \(U:=\cP\setminus
u\bigl(\cP^\circ\setminus U^\circ\bigr)\) is a non-empty Zariski-open subset of  \(\cP\), and \(u^{-1}(U)\subset U^\circ\). 
We replace \(\cP\) by \(U\) and \(\cP^\circ\) by \(u^{-1}(U)\).

For every \(t\in\cP^\circ\), restriction to the fiber \(S_t\)
preserves the exactness of
\[
0\to
T_{\cF}
\to
T_{\cS/\cP^\circ}
\to
\mathcal I
\to 0
,\quad
0\to
\mathcal I
\to
h^*\mathcal Q_{\mathrm{rel}}
\to
\mathcal C
\to 0.
\]
Since \(q\) is smooth, \(T_{\cS/\cP^\circ}\vert_{S_t}\simeq T_{S_t}\). 
It follows that
\[
T_{\cF}\vert_{S_t}
=
\ker(\boldsymbol\varphi_t)
, \quad
T_{S_t}/T_{\cF}|_{S_t}
\simeq
\mathcal I|_{S_t}
\hookrightarrow
h_t^*\bigl(\mathcal Q_{\mathrm{rel}}|_{\cQ_t^\circ}\bigr).
\]
By Lemma~\ref{lem:relative-resolution-family} \(h_t\) is birational, while \(\Pi_t^\circ|_{Y_t}:Y_t\to X_t^\circ\) is dominant and generically finite by
Lemma~\ref{lem:horizontal-spectral-divisor-family}. Hence \(r_t:S_t\to X_t^\circ\) is dominant and generically finite. Its differential is therefore an
isomorphism at the generic point of \(S_t\), and
\(\boldsymbol\varphi_t\) has generic rank \(1\). Consequently,
\(T_{\cF}|_{S_t}\) has rank \(1\). Its quotient in \(T_{S_t}\) is
torsion-free, being a subsheaf of a line bundle. Hence \(T_{\cF}|_{S_t}\) is saturated.
\end{proof}

In the sequel we replace \(\cP\) and \(\cP^\circ\) by the open subsets furnished by
Lemma~\ref{lem:characteristic-foliation-base-change}. 

As in \S~\ref{sec:single-symmetric-differential} the sheaf \(T_{\cF}\) is closed under the relative Lie bracket.
Indeed, the bracket induces an \(\cO_{\cS}\)-linear morphism
\begin{equation}\label{eq:bracket-morphism-family}
\bigwedge\nolimits^2T_{\cF}
\longrightarrow
T_{\cS/\cP^\circ}/T_{\cF}.
\end{equation}
The left-hand side of \eqref{eq:bracket-morphism-family} is a torsion sheaf because \(T_{\cF}\) has generic rank \(1\), whereas the right-hand side is torsion-free. Hence \eqref{eq:bracket-morphism-family} vanishes.

For \(t\in\cP^\circ\), we denote \(T_{\aF_t}:=T_{\cF}\vert_{S_t}\subset T_{S_t}\). The same argument shows that \(T_{\aF_t}\) is closed under the Lie bracket. Thus it defines a saturated rank-\(1\) foliation \(\aF_t\) on \(S_t\).
On the dense open subset on which \(h_t\) is locally an isomorphism, one has
\[
T_{\aF_t}
=
\ker\!\left(
T_{Y_t}
\xrightarrow{\,
\boldsymbol\theta_t|_{T_{Y_t}}
\,}
\mathcal Q_{\mathrm{rel}}|_{Y_t}
\right).
\]
In summary \(\aF_t\) is the characteristic foliation of
\(Y_t\subset\bP(T_{X_t^\circ})\) pulled back to the resolution \(S_t\).

\subsection{Invariant curves}\label{subsec:invariant-curves}

We restore the index \(j\) in the notation of the preceding
subsection. Since there are only finitely many indices
\(1\leq j\leq s\), we apply
Lemmas~\ref{lem:relative-resolution-family} and
\ref{lem:characteristic-foliation-base-change} simultaneously to all
the components \(\cY_j^\circ\). We write
\[
h_j:\cS_j\longrightarrow \cY_j^\circ\subset\cQ^\circ
\]
for the resulting morphism and, for \(t'\in\cP^\circ\), we denote by
\(\aF_{j,t'}\) the induced characteristic foliation on \(S_{j,t'}\). 

Recall that $\boldsymbol\varphi_j:T_{\cS_j/\cP^\circ}
\to h_j^*\mathcal Q_{\mathrm{rel}}$ is the pulled-back relative
contact morphism. Denote by $\mathcal W_j:=Z(\boldsymbol\varphi_j)$ its zero scheme, viewed as a section of $\Omega^1_{\cS_j/\cP^\circ}\otimes h_j^*\mathcal Q_{\mathrm{rel}}$, and let $W_{j,t'}:=\mathcal W_j\cap S_{j,t'}$.
Note that the morphism $\boldsymbol\varphi_{j,t'}$ has generic rank one (see the proof of Lemma~\ref{lem:characteristic-foliation-base-change}). Therefore we can assume that $\dim W_{j,t'}\leq 1$ for every $t'\in\cP^\circ$.

Let \(t\in\cP\), and let \(C\subset X_t\) be an irreducible curve.
Denote by
\[
\eta_C:\overline C\longrightarrow C\hookrightarrow X_t
\]
its normalization. At a general point \(p\in\overline C\), the
differential \(d\eta_{C,p}\) is non-zero and determines the tangent
line
\[
\ell_p:=
\operatorname{im}\bigl(
d\eta_{C,p}:T_{\overline C,p}\longrightarrow
T_{X_t,\eta_C(p)}
\bigr).
\]
This defines a rational map $\overline C\dashrightarrow\bP(T_{X_t})$, which extends uniquely to a morphism
\[
\gamma_C:\overline C\to\bP(T_{X_t}),
\qquad
p\mapsto \ell_p.
\]
We denote its image by \(\widehat C:=\gamma_C(\overline C)\subset\bP(T_{X_t})\). The equality \(\Pi_t\circ\gamma_C=\eta_C\) holds on a dense open subset of \(\overline C\), and hence everywhere. In particular, we have \(\Pi_t(\widehat C)=C\).

Suppose that \(C\not\subset Z(\omega_t)\). At a point \(x\in X_t\), \(\omega_t(x)\in
\Sym^m T_{X_t,x}^*\otimes L_{t,x}\) defines an \(L_{t,x}\)-valued homogeneous polynomial of degree \(m\)
on \(T_{X_t,x}\). We say that \(C\) is
\emph{invariant by \(\omega_t\)} if, at a general smooth point
\(x\in C\), the restriction of \(\omega_t(x)\) to the tangent line
\(T_{C,x}\) vanishes. 
We can equivalently describe the invariant condition as follows. Pulling back \(\omega_t\) by $\eta_C$ and applying the morphism induced by \(d\eta_C^*:\eta_C^*\Omega^1_{X_t}\to\Omega^1_{\overline C}\), we get a section
\[
\omega_{t,C}
\in
\coh^0\left(
\overline C,\,
(\Omega^1_{\overline C})^{\otimes m}\otimes\eta_C^*L_t
\right).
\]
Then \(C\) is invariant by \(\omega_t\) if and only if \(\omega_{t,C}=0\). 

\begin{lem}\label{lem:invariant-lift}
Let \(t\in\cP\), and let \(C\subset X_t\) be an irreducible curve such
that \(C\not\subset Z(\omega_t)\). Then the following conditions are equivalent:
\begin{enumerate}
\item[(i)]
The curve \(C\) is invariant by \(\omega_t\).
\item[(ii)]
Its tangent lift satisfies \(\widehat C\subset D_{\omega_t}\).
\end{enumerate}
Suppose that these conditions hold. For every
\(t'\in\cP^\circ\) such that \(u(t')=t\), there exists an index \(1\leq j\leq s\) such that \(\widehat C\subset Y_{j,t'}^\circ\). Moreover, if \(C^S\) is an irreducible component of the preimage
\(h_{j,t'}^{-1}(\widehat C)\) which dominates \(\widehat C\),
and if $C^S\not\subset W_{j,t'}$, then \(C^S\) is invariant by
\(\aF_{j,t'}\). More precisely, at a general smooth point \(p\in C^S\), we have \(T_{C^S,p}\subset T_{\aF_{j,t'},p}\).
\end{lem}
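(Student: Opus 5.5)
The equivalence (i)$\Leftrightarrow$(ii) will follow from unwinding the definition of $s_{\omega_t}$. By construction, at a point $(x,\ell)\in\bP(T_{X_t})$ the value of $s_{\omega_t}$ is $\omega_t(x)$ evaluated on a generator of $\ell$. Pulling back along $\gamma_C$ therefore gives $\gamma_C^*s_{\omega_t}\in\coh^0(\overline C,\gamma_C^*\aO(m)\otimes\eta_C^*L_t)$. At a general point $p$, where $d\eta_{C,p}\neq0$ spans $\ell_p$, this section agrees (up to the nowhere-vanishing factor identifying $\gamma_C^*\aO(-1)$ with $T_{\overline C}$ there) with $\omega_{t,C}(p)$. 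Hence $\omega_{t,C}=0$ if and only if $\gamma_C^*s_{\omega_t}$ vanishes on a dense open subset, hence identically. Since $\overline C$ is irreducible, this happens if and only if $\widehat C=\gamma_C(\overline C)\subset Z(s_{\omega_t})=D_{\omega_t}$.

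Next, fix $t'$ with $u(t')=t$, and identify $\cQ^\circ_{t'}\simeq\bP(T_{X_t})$ and $D^\circ_{t'}\simeq D_{\omega_t}$ via the base change. The curve $\widehat C$ is irreducible, so it lies in some irreducible component of $D^\circ_{t'}$. Lemma~\ref{lem:horizontal-spectral-divisor-family}(iii) gives $D^\circ_{t'}=\sum_j a_jY^\circ_{j,t'}+\cD^{\circ,\mathrm{ver}}_{t'}$. I must rule out the possibility that $\widehat C$ lies only in vertical components. Their images lie in $\cB^\circ_{t'}$, which has dimension $\le1$. So if $\widehat C$ lay in a vertical component, then $C$ would be a component of $\cB^\circ_{t'}$, and $\widehat C$ would be an irreducible curve inside $\Pi^{-1}(C)\cap D^\circ_{t'}$. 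This is the main obstacle: as stated, the lemma has no hypothesis excluding it. My plan is a local argument. Near a general point $x\in C$ with $x\notin Z(\omega_t)$, the fiber $L_x$ meets $D^\circ_{t'}$ in a divisor of degree $m$. Because the vertical part lies over the curve $\cB^\circ_{t'}$, it contributes a divisor supported over $C$ that contains no whole fiber $L_x$. Since $x\notin Z$, the vertical part can only be a multisection of $\Pi$ over $C$, namely a curve; but a divisor in a threefold is a surface. A surface with image $C$ must therefore contain entire fibers $L_x$, and then $L_x\subset D^\circ_{t'}$, forcing $x\in Z(\omega_t)$. This contradicts the choice of $x$, since general $x\in C$ avoids $Z(\omega_t)$. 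Thus vertical components project into $Z(\omega_t)$, and in fact $\cB^\circ_{t'}\subset Z$. Hence $\widehat C$, which dominates $C\not\subset Z$, lies in some $Y^\circ_{j,t'}$.

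For the last assertion, let $C^S\subset S_{j,t'}$ dominate $\widehat C$. At a general smooth point $p\in C^S$, the map $h_{j,t'}$ sends $T_{C^S,p}$ into $T_{\widehat C,h(p)}$. The key claim is that $\widehat C$ is tangent to the contact distribution $\aH=\ker\theta$. Indeed, at $\gamma_C(q)$ the vector $d\pi(d\gamma_C(v))=d\eta_C(v)$ lies in $\ell_q$, so $\theta(d\gamma_C(v))=0$. Consequently $\boldsymbol\varphi_{j,t'}(T_{C^S,p})=h^*\theta(dh(T_{C^S,p}))=0$, that is, $T_{C^S,p}\subset\ker\boldsymbol\varphi_{j,t'}=T_{\aF_{j,t'}}|_p$ by Lemma~\ref{lem:characteristic-foliation-base-change}. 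Finally, because $C^S\not\subset W_{j,t'}$, the map $\boldsymbol\varphi$ has rank one at a general $p\in C^S$. There the kernel is a line subbundle, which then equals $T_{C^S,p}$. This gives invariance.
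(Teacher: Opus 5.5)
Your proof is correct and follows essentially the same route as the paper. Both arguments identify $\gamma_C^*s_{\omega_t}$ with $\omega_{t,C}$ at general points for (i)$\Leftrightarrow$(ii); both show that a non-dominating component of $D_{t'}^\circ$ containing $\widehat C$ would have to equal $(\Pi_{t'}^\circ)^{-1}(C)$, which forces $C\subset Z(\omega_t)$ (so the hypothesis $C\not\subset Z(\omega_t)$ is exactly what removes the ``obstacle'' you flagged); and both use the contact computation $\boldsymbol\varphi_{j,t'}(T_{C^S,p})=\boldsymbol\theta_{t'}\bigl(dh_{j,t'}(T_{C^S,p})\bigr)=0$ at a general point $p\in C^S\setminus W_{j,t'}$. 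The one detail to add is that $\gamma_C:\overline C\to\widehat C$ is generically unramified (characteristic zero), so $d\gamma_C$ spans $T_{\widehat C}$ at general points, and your computation therefore really gives $T_{\widehat C}\subset\ker\boldsymbol\theta_{t'}$.
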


\begin{proof}
Let \(p\in\overline C\) be a general point, and let \(x:=\eta_C(p)\in C\). Then \(x\notin Z(\omega_t)\),  \(x\) is a smooth point of \(C\), and \(\gamma_C(p)=(x,T_{C,x})\). By the definition of the spectral divisor, \(\gamma_C(p)\in D_{\omega_t}\) if and only if \(\omega_t(x)\) vanishes on the line \(T_{C,x}\).
Consequently, \(C\) is invariant by \(\omega_t\) if and only if a
dense open subset of \(\widehat C\) is contained in \(D_{\omega_t}\).
Since \(D_{\omega_t}\) is closed, this is equivalent to \(\widehat C\subset D_{\omega_t}\).

Suppose now that \(\widehat C\subset D_{\omega_t}\). Let \(t'\in\cP^\circ\) such that \(u(t')=t\). We have isomorphisms \(X_{t'}^\circ\simeq X_t,
\cQ_{t'}^\circ\simeq\bP(T_{X_t}),
D_{t'}^\circ\simeq D_{\omega_t}\). Let  \(E\subset D_{t'}^\circ\) be an irreducible component containing \(\widehat C\). We claim that \(E\) dominates
\(X_{t'}^\circ\).

Assume otherwise. The image
\(\Pi_{t'}^\circ(E)\) is then a proper irreducible closed subset of
the surface \(X_{t'}^\circ\). Since it contains \(\Pi_{t'}^\circ(\widehat C)=C\), we have \(\Pi_{t'}^\circ(E)=C\). Therefore \(E\subset(\Pi_{t'}^\circ)^{-1}(C)\). As both \(E\) and \((\Pi_{t'}^\circ)^{-1}(C)\) are integral surfaces, we have \(E=(\Pi_{t'}^\circ)^{-1}(C)\). It follows that, for a general point \(x\in C\), the fiber \((\Pi_{t'}^\circ)^{-1}(x)\simeq\bP(T_{X_t,x})\) is contained in \(D_{t'}^\circ\), so in particular \(\omega_t(x)=0\). Thus a dense open subset of \(C\) is contained in \(Z(\omega_t)\), and hence \(C\subset Z(\omega_t)\), contrary to the hypothesis.

Now we know that the component \(E\) dominates \(X_{t'}^\circ\). By
Lemma~\ref{lem:horizontal-spectral-divisor-family}, there exists an
index \(1\leq j\leq s\) such that \(E=Y_{j,t'}^\circ,\widehat C\subset Y_{j,t'}^\circ\). 
We next show that \(\widehat C\) is tangent to the contact
distribution. 
At a general point \(p\in\overline C\), for every
\(v\in T_{\overline C,p}\) one has
\[
d\Pi_t\bigl(d\gamma_C(v)\bigr)
=
d\eta_C(v)
\in\ell_p.
\]
By the definition of the contact structure, we get \(\boldsymbol\theta_{t'}
\bigl(d\gamma_C(v)\bigr)=0\). 
Since \(\gamma_C\) is an isomorphism over a dense open subset of \(\widehat C\), we deduce that, at a general smooth point of \(\widehat C\), there is an inclusion \(T_{\widehat C}\subset\ker(\boldsymbol\theta_{t'})\).

Let \(C^S\) be an irreducible component of \(h_{j,t'}^{-1}(\widehat C)\) which dominates \(\widehat C\), and assume that \(C^S\not\subset W_{j,t'}\). Let \(p\in C^S\setminus W_{j,t'}\) be a general smooth point such that \(h_{j,t'}(p)\) is a smooth point of \(\widehat C\) and the differential of $h_{j,t'}\vert_{C^S}$ is nonzero. Since \(\boldsymbol\varphi_{j,t'}\) is nonzero at \(p\), it is a
surjective morphism of bundles on a neighborhood of \(p\). Its kernel is
therefore a line subbundle locally around \(p\), which by Lemma~\ref{lem:characteristic-foliation-base-change} is \(T_{\aF_{j,t'}}\).
We have
\[
\boldsymbol\varphi_{j,t'}(T_{C^S,p})
=\boldsymbol\theta_{t'}\bigl(dh_{j,t'}(T_{C^S,p})\bigr)=0.
\]
Thus \(T_{C^S,p}\subset T_{\aF_{j,t'},p}\), which proves that \(C^S\) is invariant by \(\aF_{j,t'}\).
\end{proof}

\subsection{Degrees}\label{sec:family-degree}

Recall that $u:\cP^\circ\to\cP$ denotes the finite etale base change constructed earlier in this section, and that $\cX^\circ:=\cX\times_{\cP}\cP^\circ$. We denote the pullback of the polarization $\aH$ to $\cX^\circ$ by $\aH_{\cX}$. For $t'\in\cP^\circ$, its restriction to $X_{t'}^\circ\simeq X_{u(t')}$ is denoted $H_{u(t')}$.

For \(1\leq j\leq s\), recall the morphisms
\[
q_j:\cS_j\longrightarrow\cP^\circ,
\qquad
r_j:\cS_j\longrightarrow\cX^\circ.
\]
Since \(q_j\) is projective, we can choose a \(q_j\)-ample line bundle
\(\mathcal A_j\) on \(\cS_j\). Define
\[
\aH_j
:=
\mathcal A_j\otimes r_j^*\mathcal H_{\cX}.
\]
For \(t'\in\cP^\circ\), denote the fiber restrictions by
\[
A_{j,t'}:=\mathcal A_j|_{S_{j,t'}},
\qquad
H_{j,t'}:=\aH_j|_{S_{j,t'}}.
\]
Then \(A_{j,t'}\) is ample and
\(r_{j,t'}^*H_{u(t')}\) is nef. Hence \(H_{j,t'}\) is ample.

\begin{lem}\label{lem:two-sided-comparison-fixed-L}
For every \(1\leq j\leq s\), every \(t'\in\cP^\circ\), and every integral curve \(\Gamma\subset S_{j,t'}\), one has \(\bigl(r_{j,t'}^*H_{u(t')}\bigr)\cdot\Gamma
\leq H_{j,t'}\cdot\Gamma\). \end{lem}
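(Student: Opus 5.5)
The inequality follows directly from how \(H_{j,t'}\) is built. By construction \(\aH_j=\mathcal A_j\otimes r_j^*\aH_{\cX}\). Restricting to the fiber \(S_{j,t'}\) gives
\[
H_{j,t'}=A_{j,t'}\otimes r_{j,t'}^*H_{u(t')}.
\]
This uses two facts. First, restriction of line bundles commutes with pullback. Second, \(r_j|_{S_{j,t'}}=r_{j,t'}\) composed with the identification \(X^\circ_{t'}\simeq X_{u(t')}\), under which \(\aH_{\cX}|_{X^\circ_{t'}}=H_{u(t')}\).

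For an integral curve \(\Gamma\subset S_{j,t'}\), intersection numbers are additive in the line bundle. Hence
\[
H_{j,t'}\cdot\Gamma
=A_{j,t'}\cdot\Gamma+\bigl(r_{j,t'}^*H_{u(t')}\bigr)\cdot\Gamma .
\]
It therefore suffices to show \(A_{j,t'}\cdot\Gamma\geq 0\).

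This holds because \(\mathcal A_j\) is \(q_j\)-ample. Its restriction \(A_{j,t'}\) to the projective fiber is ample, and an ample line bundle has positive degree on every integral curve. In fact this gives \(A_{j,t'}\cdot\Gamma>0\), which is stronger than needed.

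As a consistency check, the other term is also nonnegative. We may assume \(H_{u(t')}\) is ample on \(X_{u(t')}\) (since \(\aH\) is \(p\)-ample), so its pullback by \(r_{j,t'}\) is nef. Concretely, \(\bigl(r_{j,t'}^*H_{u(t')}\bigr)\cdot\Gamma=\deg(r_{j,t'}|_\Gamma)\,H_{u(t')}\cdot r_{j,t'}(\Gamma)\) when \(\Gamma\) is not contracted, and it equals \(0\) when \(\Gamma\) is contracted. This nonnegativity is not needed for the stated inequality.

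The only point requiring any care is the compatibility of restrictions under the base change \(u\), that is, that \(\aH_{\cX}\) restricts to \(H_{u(t')}\). This holds by the definition of \(\aH_{\cX}\) as a pullback, so I expect no genuine obstacle.
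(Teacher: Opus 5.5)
Your proof is correct and matches the paper's argument: restrict \(\aH_j=\mathcal A_j\otimes r_j^*\aH_{\cX}\) to the fiber, use additivity of intersection numbers, and conclude from \(A_{j,t'}\cdot\Gamma>0\) by ampleness. The nefness check on \(r_{j,t'}^*H_{u(t')}\) is, as you note yourself, not needed.
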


\begin{proof}
By the definition of \(\aH_j\), we have \(H_{j,t'}
=
A_{j,t'}\otimes r_{j,t'}^*H_{u(t')}\). Therefore
\[
H_{j,t'}\cdot\Gamma
=
A_{j,t'}\cdot\Gamma
+
\bigl(r_{j,t'}^*H_{u(t')}\bigr)\cdot\Gamma.
\]
Since \(A_{j,t'}\) is ample, \(A_{j,t'}\cdot\Gamma>0\), and the assertion follows.
\end{proof}

\begin{lem}
\label{lem:uniform-resolution-exceptional-locus}
Up to replacing \(\cP\) by a nonempty Zariski-open subset and \(\cP^\circ\) by its inverse image, there is a closed subscheme \(\cN\subset\cX\) 
whose fibers over \(\cP\) have dimension at most one and which satisfies the following properties. First, there is a constant \(\tau_{\mathrm{exc}}>0\), independent of
\(t\in\cP\), such that every integral curve
\(C\) contained in the scheme-theoretic fiber \(\cN_t\) satisfies
\[
 H_t\cdot C\leqslant\tau_{\mathrm{exc}}.
\]
Second, if \(C\subset X_t\) is a \(\omega_t\)-invariant curve with \(C\not\subset Z(\omega_t)\cup\cN_t\), then for every \(t'\in\cP^\circ\) above \(t\), one may choose the curve \(C^S\subset S_{j,t'}\) in Lemma~\ref{lem:invariant-lift} so that \(C^S\to C\) is birational, and 
\[
 g(C^S)=g(C),
 \qquad
 H_t\cdot C\leqslant H_{j,t'}\cdot C^S.
\]
\end{lem}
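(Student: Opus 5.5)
We will build \(\cN\) as the image in \(\cX\) of the union of all loci where the map \(r_j=\Pi^\circ\circ h_j:\cS_j\to\cX^\circ\) fails to be an isomorphism onto its image in the relevant sense. The second assertion will then come from looking at the curve \(\widehat C\) away from these bad loci. We work with all \(j=1,\dots,s\) at once and push images down through the finite map \(\cX^\circ\to\cX\).

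\textbf{Step 1: construct the bad locus.} For each \(j\), let \(\cE_j\subset\cS_j\) be the closed subset where \(h_j:\cS_j\to\cY_j^\circ\) is not a local isomorphism. This is the exceptional locus of \(\rho\) together with the preimage of the non-normal locus of \(\cY_j^\circ\). It is a proper closed subset of \(\cS_j\), so its generic fiber over \(\cP^\circ\) has dimension at most one.

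We also let \(\cR_j\subset\cY_j^\circ\) be the closed subset over which \(\Pi^\circ|_{\cY_j^\circ}\) is not finite or not étale. This is the non-finite locus together with the ramification locus. By Lemma~\ref{lem:horizontal-spectral-divisor-family}(iv), \(\Pi^\circ|_{\cY_j^\circ}\) is generically finite and generically étale (characteristic zero), so \(\cR_j\) is proper in \(\cY_j^\circ\).

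Set \(\cN^\circ:=\bigcup_j\Pi^\circ\bigl(h_j(\cE_j)\cup\cR_j\bigr)\subset\cX^\circ\). This is closed because the maps are projective, and its generic fiber has dimension at most one. Take \(\cN\) to be the image of \(\cN^\circ\) in \(\cX\), which is closed since \(u\) is finite.

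Applying \cite[Cor.~13.1.5]{EGAIV} and shrinking, as in Lemma~\ref{lem:uniform-base-locus-degree}, we get that fibers have dimension at most one. We then keep the one-dimensional dominant components, make them flat by \cite[Th.~6.9.1]{EGAIV}, and read off \(\tau_{\mathrm{exc}}\) as the \(H_t\)-degree of the flat one-dimensional part. Here we also need generic flatness of the base change, which allows the fibers \(\cN_t\) to be computed fiberwise after shrinking.

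\textbf{Step 2: lift the curve birationally.} Let \(C\not\subset Z(\omega_t)\cup\cN_t\) be \(\omega_t\)-invariant. By Lemma~\ref{lem:invariant-lift}, \(\widehat C\subset Y_{j,t'}^\circ\) for some \(j\). A general point \(x\in C\) lies outside \(\cN_t\), so the point \(\gamma_C(p)\in\widehat C\) over it avoids \(\cR_{j}\) and \(h_j(\cE_j)\). Therefore \(h_{j,t'}\) is an isomorphism near \(\gamma_C(p)\).

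We take \(C^S\) to be the strict transform, that is, the closure of \(h_{j,t'}^{-1}\) of the general part of \(\widehat C\). Then \(C^S\to\widehat C\) is birational. Since \(\gamma_C\) is birational onto \(\widehat C\), \(\widehat C\to C\) is birational as well. Hence \(C^S\to C\) is birational, and the genera coincide since both have normalization \(\overline C\).

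Here we must make sure the fiberwise locus \(\cE_j\cap S_{j,t'}\) really contains the non-isomorphism locus of \(h_{j,t'}\). This is where the shrinking is needed: fiberwise normality of \(\widetilde\cY\) can fail. To handle it, we would instead define \(\cE_j\) directly as the locus where \(h_j\) is not an open immersion, and choose the shrinking so that this locus restricts correctly on fibers via Lemma~\ref{lem:relative-resolution-family}(ii). Note that \(h_{j,t'}\) being an isomorphism over \(V_t\) already suffices, since \(V\) was constructed there.

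\textbf{Step 3: the degree inequality.} Since \(r_{j,t'}|_{C^S}:C^S\to C\) is birational, the projection formula gives \(r_{j,t'}^*H_{u(t')}\cdot C^S=H_t\cdot C\). Lemma~\ref{lem:two-sided-comparison-fixed-L} then yields \(H_t\cdot C\le H_{j,t'}\cdot C^S\).

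The main obstacle is the fiberwise control in Step 2, namely guaranteeing that the bad loci of the family specialize to the bad loci on every fiber after one uniform shrinking.
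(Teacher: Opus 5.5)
Your argument is essentially the paper's proof. In both, \(\cN\) is the image of \(B_j=h_j(\cE_j)\), the locus over which \(h_j\) fails to be an isomorphism, together with one further bad locus. \(C^S\) is the strict transform of \(\widehat C\), and the degree inequality comes from the projection formula and Lemma~\ref{lem:two-sided-comparison-fixed-L}.

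\textbf{The missing check.} You never verify that \(C^S\not\subset W_{j,t'}\). That is the hypothesis Lemma~\ref{lem:invariant-lift} needs to conclude that \(C^S\) is \(\aF_{j,t'}\)-invariant. This invariance is what Corollary~\ref{cor:degree-control} and Theorem~\ref{thm:main-degree-bound} actually use, and it is why the paper puts \(r_j(\mathcal W_j)\) into \(\cN^\circ\).

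Your \(\cN\) does cover this, but you should say so. Suppose \(p\notin\cE_j\) and \(h_j(p)\notin\cR_j\).
\begin{itemize}
\item Then \(r_j\) is \'etale at \(p\), so \(dr_j\) is an isomorphism of relative tangent spaces, since both \(\cS_j\) and \(\cX^\circ\) are smooth over \(\cP^\circ\).
\item Hence \(\boldsymbol\varphi_j(p)\), which is \(dr_j\) followed by the surjective tautological quotient, is nonzero.
\item Therefore \(\mathcal W_j\subset\cE_j\cup h_j^{-1}(\cR_j)\), and so \(r_j(\mathcal W_j)\subset\cN^\circ\).
\end{itemize}
As written, your \(\cR_j\) plays no role in your argument; this is where it earns its place. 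It is larger than the paper's \(r_j(\mathcal W_j)\), since it contains the whole ramification locus, but its fibers are still at most one-dimensional, so nothing is lost.

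\textbf{The ``main obstacle'' is not there.} Since \(h_j\) is proper and birational, it is an isomorphism \(h_j^{-1}(U_j)\to U_j\) over the open set \(U_j=\cY_j^\circ\setminus h_j(\cE_j)\) of the total space. Base change to the fiber over \(t'\) then shows that \(h_{j,t'}\) is an isomorphism over \((U_j)_{t'}\). No fiberwise normality is needed, and no shrinking beyond what makes the fibers of \(\cN\) at most one-dimensional. This is exactly how the paper gets birationality of \(C^S\to\widehat C\) from \(\widehat C\not\subset(B_j)_{t'}\).
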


\begin{proof}
For each \(1\leq j\leq s\), let
\(U_j\subset\cY_j^\circ\) be the largest open subset over which \(h_j:\cS_j\to\cY_j^\circ\) is locally an isomorphism, and let \(B_j:=\cY_j^\circ\setminus U_j\) be the complement. The proof of Lemma~\ref{lem:relative-resolution-family} shows, up to
shrinking $\cP^\circ,\cP$, that
\begin{equation}\label{eq:dimension_Bjt_lessthan_one}
\dim (B_j)_{t}\leq 1, \quad \text{for every}\, t\in \cP^\circ.
\end{equation}
Recall that the $\mathcal W_j$ were introduced at the beginning of \S~\ref{subsec:invariant-curves}. The following union, which is projective over \(\cP^\circ\),
\[
 \cN^\circ := \bigcup_{j=1}^s
 \bigl(\Pi^\circ(B_j)\cup r_j(\mathcal W_j)\bigr) \subset\cX^\circ
\]
has fibers of dimension at most one. Let \(\cN\) be the image of \(\cN^\circ\) in \(\cX\). We have \(\dim\cN_t\leq 1\) for every \(t\in \cP\).

Up to shrinking \(\cP\) we can and will assume that every irreducible component of \(\cN\) dominates \(\cP\). By upper semicontinuity of fiber dimension, we can shrink further \(\cP\) so that each irreducible component with zero-dimensional generic fiber has only fibers of dimension zero. Let \(\cN^{(1)}\) be the union of the remaining components whose generic fibers have dimension one. We are going to define a constant \(\tau_{\mathrm{exc}}\). If \(\cN^{(1)}\) is empty, we set simply \(\tau_{\mathrm{exc}}:=1\). Otherwise we define this constant as follows. Up to shrinking we can assume that \(\cN^{(1)}\to\cP\) is flat. The Hilbert polynomial of the fiber \(\cN_t^{(1)}\), with respect to some chosen very ample power \(H_t^{\otimes k}\), is therefore independent of \(t\). We define \(\tau_{\mathrm{exc}}\) to be the coefficient of its linear term. Every integral curve \(C\subset\cN_t\) occurs with positive multiplicity in the one-dimensional cycle of the fiber \(\cN_t^{(1)}\), so \(k(H_t\cdot C)\leq\tau_{\mathrm{exc}}\), and hence \(H_t\cdot C\leq\tau_{\mathrm{exc}}\).

Now let \(C\not\subset Z(\omega_t)\cup\cN_t\). By
Lemma~\ref{lem:invariant-lift}, for every \(t'\in \cP^\circ\) above \(t\in \cP\) there is an index \(j\) such that \(\widehat C\subset Y_{j,t'}^\circ\). By the definition of \(\cN\) we have that \(\widehat C\not\subset(B_j)_{t'}\).  Hence the closure
\[
 C^S
 :=
 \overline{h_{j,t'}^{-1}
 \bigl(\widehat C\cap(U_j)_{t'}\bigr)}
 \subset S_{j,t'}
\]
is the strict transform of \(\widehat C\), and \(C^S\to\widehat C\) is birational. Since \(C\not\subset\cN_t\) and \(r_j(\mathcal W_j)\subset\cN^\circ\), we also have \(C^S\not\subset W_{j,t'}\). Thus we deduce from Lemma~\ref{lem:invariant-lift} that \(C^S\) is \(\aF_{j,t'}\)-invariant. 

By Lemma~\ref{lem:invariant-lift}
we also know that \(C^S\) is \(\aF_{j,t'}\)-invariant. As the composition \(C^S\to C\) is birational, their geometric genera coincide. Finally, by the projection formula and by Lemma~\ref{lem:two-sided-comparison-fixed-L} we get \(H_t\cdot C =\bigl(r_{j,t'}^*H_t\bigr)\cdot C^S  \leq H_{j,t'}\cdot C^S\).
\end{proof}

The following summarizes what we did in this section:
\begin{cor}\label{cor:degree-control}
Let \(t\in\cP\), and let \(C\subset X_t\) be an
\(\omega_t\)-invariant integral curve such that \(C\not\subset Z(\omega_t)\cup\cN_t\). 
For every \(t'\in\cP^\circ\) with \(u(t')=t\), there exist an index \(1\leq j\leq s\) and an integral curve \(C^S\subset S_{j,t'}\) such that \(C^S\) is \(\aF_{j,t'}\)-invariant, the morphism \(C^S\to C\) is birational, and
\[
g(C^S)=g(C),
\qquad
H_t\cdot C
\leq
H_{j,t'}\cdot C^S.
\]
\end{cor}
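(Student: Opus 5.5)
The corollary is essentially a repackaging of Lemma~\ref{lem:invariant-lift} and Lemma~\ref{lem:uniform-resolution-exceptional-locus}, so the plan is to chain them. We work after all the shrinkings of this section. Fix $t\in\cP$ and an $\omega_t$-invariant integral curve $C\subset X_t$ with $C\not\subset Z(\omega_t)\cup\cN_t$, and fix $t'\in\cP^\circ$ with $u(t')=t$. Since $u$ is surjective, such $t'$ exists, though the statement quantifies over all of them. The identifications $X^\circ_{t'}\simeq X_t$, $\cQ^\circ_{t'}\simeq\bP(T_{X_t})$ and $D^\circ_{t'}\simeq D_{\omega_t}$ let us transport $C$ and its tangent lift $\widehat C$ into the fiber over $t'$.

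First, because $C\not\subset Z(\omega_t)$ and $C$ is invariant, Lemma~\ref{lem:invariant-lift} gives $\widehat C\subset D_{\omega_t}$. It also gives an index $j$ with $\widehat C\subset Y^\circ_{j,t'}$; this index comes from the dominance argument for the component containing $\widehat C$. Second, we use $C\not\subset\cN_t$ and apply the second part of Lemma~\ref{lem:uniform-resolution-exceptional-locus}. Since $\cN^\circ\supset\Pi^\circ(B_j)$, the curve $\widehat C$ is not contained in the non-isomorphism locus $(B_j)_{t'}$ of $h_j$. Hence the strict transform $C^S:=\overline{h_{j,t'}^{-1}(\widehat C\cap (U_j)_{t'})}$ is an integral curve in $S_{j,t'}$ that maps birationally onto $\widehat C$. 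Moreover $\gamma_C:\overline C\to\widehat C$ is birational and $\Pi_t\circ\gamma_C=\eta_C$, so $C^S\to C$ is birational. It follows that $g(C^S)=g(C)$, since geometric genus is a birational invariant.

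Third, we check invariance. Because $r_j(\mathcal W_j)\subset\cN^\circ$ and $C\not\subset\cN_t$, the curve $C^S$ is not contained in $W_{j,t'}$. The last assertion of Lemma~\ref{lem:invariant-lift} then shows that $C^S$ is $\aF_{j,t'}$-invariant. Finally, for the degree inequality, $r_{j,t'}|_{C^S}:C^S\to C$ is birational, so the projection formula gives $H_t\cdot C=(r_{j,t'}^*H_t)\cdot C^S$. Lemma~\ref{lem:two-sided-comparison-fixed-L} bounds the right-hand side by $H_{j,t'}\cdot C^S$.

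The only point requiring care is to make sure the index $j$ is the same throughout. The $j$ supplied by Lemma~\ref{lem:invariant-lift} must be the one for which the exclusion of $(B_j)_{t'}$ and $W_{j,t'}$ is used. This holds because $\cN^\circ$ was defined as the union over all $j$, so the exclusions are available for whichever $j$ arises. Beyond this bookkeeping the corollary follows formally.
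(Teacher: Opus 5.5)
Your proposal is correct and follows the paper's argument essentially verbatim. The corollary is just the second half of the proof of Lemma~\ref{lem:uniform-resolution-exceptional-locus}, which likewise:
\begin{enumerate}
\item takes the index $j$ from Lemma~\ref{lem:invariant-lift};
\item uses $\Pi^\circ(B_j)\cup r_j(\mathcal W_j)\subset\cN^\circ$ to get a strict transform $C^S$ that is birational onto $\widehat C$ and not contained in $W_{j,t'}$;
\item concludes with the projection formula and Lemma~\ref{lem:two-sided-comparison-fixed-L}.
\end{enumerate}
Your extra justification that $\widehat C\to C$ is birational, via $\Pi_t\circ\gamma_C=\eta_C$, usefully fills in a step the paper leaves implicit.
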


Finally we make an observation needed later:
\begin{lem}\label{lem:resolved-surfaces-general-type}
Assume that every fiber $X_t$ is of general type. Then every surface
$S_{j,t'}$ is of general type.
\end{lem}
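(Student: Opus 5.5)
The statement follows from a general principle: a smooth projective surface admitting a generically finite dominant morphism onto a surface of general type is itself of general type. I would apply this to $r_{j,t'}:S_{j,t'}\to X_{t'}^\circ\simeq X_{u(t')}$ for each $1\leq j\leq s$ and $t'\in\cP^\circ$. The hypotheses on $r_{j,t'}$ are already available. By Lemma~\ref{lem:relative-resolution-family}, $S_{j,t'}$ is smooth, projective and integral, and $h_{j,t'}:S_{j,t'}\to Y_{j,t'}^\circ$ is birational. By Lemma~\ref{lem:horizontal-spectral-divisor-family}(iv), $\Pi_{t'}^\circ|_{Y_{j,t'}^\circ}$ is dominant and generically finite. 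So the composite $r_{j,t'}=\Pi_{t'}^\circ\circ h_{j,t'}$ is a dominant, generically finite morphism between smooth projective surfaces. The target is of general type by assumption, since $X_{t'}^\circ\simeq X_{u(t')}$.

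To prove the principle, write $r:=r_{j,t'}$, $S:=S_{j,t'}$ and $X:=X_{u(t')}$. Since we are in characteristic zero, $r$ is generically étale, so the Jacobian gives a nonzero morphism $r^*\Omega^2_X\to\Omega^2_S$. This yields the ramification formula $K_S=r^*K_X+R$ with $R\geq 0$ an effective divisor. For every $k\geq1$, pulling back pluricanonical forms and multiplying by the $k$-th power of the section defining $R$ gives an injection
\[
\coh^0(X,kK_X)\hookrightarrow \coh^0(S,kK_S).
\]
Injectivity holds because $r$ is dominant, so a nonzero section cannot pull back to zero. Consequently $\kappa(S)\geq\kappa(X)=2$. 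This is Iitaka's easy addition for generically finite maps, and it can be cited directly. Hence $\kappa(S)=2$, and $S$ is of general type.

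I expect no serious obstacle. The only point that needs care is to use the smooth model $S_{j,t'}$ rather than the possibly singular $Y_{j,t'}^\circ$ or its normalization, since the formula $K_S=r^*K_X+R$ requires $S$ to be smooth. Lemma~\ref{lem:relative-resolution-family} guarantees exactly this for every fiber after the shrinking already performed. It also guarantees that $S_{j,t'}$ is irreducible, which ensures that pullback of sections along the dominant map $r$ is injective.
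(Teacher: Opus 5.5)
Your proposal is correct and follows essentially the same route as the paper: it shows that $r_{j,t'}:S_{j,t'}\to X_{u(t')}$ is dominant and generically finite using Lemmas~\ref{lem:horizontal-spectral-divisor-family} and~\ref{lem:relative-resolution-family}, then invokes the principle that a smooth projective surface dominating a surface of general type through a generically finite morphism is itself of general type. The only difference is that you prove this principle through the ramification formula $K_S=r^*K_X+R$ and the resulting injection of pluricanonical sections, whereas the paper just quotes it as standard.
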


\begin{proof}
By Lemmas~\ref{lem:horizontal-spectral-divisor-family} and
\ref{lem:relative-resolution-family}, the morphism
$r_{j,t'}:S_{j,t'}\to X_{u(t')}$ is dominant and generically finite.
A smooth projective surface admitting a generically finite dominant
morphism to a surface of general type is of general type.
\end{proof}

\section{From relative two-jets to families of multi-foliations}
\label{sec:multi-foliation}

The goal of this section is to explain how to construct a relative twisted symmetric differential form $\boldsymbol{\omega}$, which serves as the initial input for Section~\ref{sec:family-hypersurfaces}, using techniques from the theory of jet differentials. Since most preparatory materials of this section can be found in the literature, we keep the exposition brief and refer the reader to other references for further details (see for example \cite{Demailly-ElGoul2000,Hou-Huynh-Merker-Xie2026}).

\subsection{The Demailly--Semple tower}\label{subsec:DS-tower}

The conceptual foundation of jet differential techniques traces back to Bloch~\cite{Bloch1926}, who introduced the idea of using differential equations to constrain entire holomorphic curves. This insight was later developed by Green and Griffiths~\cite{Green-Griffiths1980}, who formulated the theory of jet differentials and proposed a fundamental vanishing principle: every negatively twisted jet differential $\omega$ on a projective manifold $X$ should yield an algebraic differential equation $f^*\omega\equiv 0$ satisfied by every entire holomorphic curve $f: \mathbb{C}\rightarrow X$. This principle provides a link between the existence of negatively twisted jet differentials and hyperbolicity: when sufficiently many such differentials exist, they force every entire curve to be algebraically degenerate. The vanishing theorem was subsequently refined and rigorously proved by Siu--Yeung~\cite{Siu-Yeung1996a}. Meanwhile, Demailly introduced \emph{invariant} jet differentials~\cite{Demailly1997}, which have proved particularly effective in the study of hyperbolicity on surfaces (see \cite{Demailly-ElGoul2000, Paun2008, Hou-Huynh-Merker-Xie2025, Hou-Huynh-Merker-Xie2026}).

For a smooth projective surface $X_t$, the Demailly--Semple tower~\cite{Demailly1997} is defined by successive projectivizations
\[
X_{t,2} \xrightarrow{\pi_{2,1}} X_{t,1} \xrightarrow{\pi_{1,0}} X_{t,0}:=X_t,
\]
where $X_{t,1}=\mathbb{P}(T_{X_t})$ and
$X_{t,2}=\mathbb{P}(V_{t,1})$ with
\[
V_{t,1,(x,[v])} = \{\xi\in T_{X_{t,1},(x,[v])}\mid (\pi_{1,0})_*\xi\in\mathbb{C}\cdot v\}.
\]
One can think of sections of $\mathcal{O}_{X_{t,2}}(m)$ as invariant $2$-jet differentials of weighted degree $m$. If
\(f:\bC\to X_t\) is a nonconstant holomorphic map, then its first lift \(f_{[1]}\) is the holomorphic extension of \(z\mapsto(f(z),[df_z])\) across the critical points of \(f\) while its second lift \(f_{[2]}\) is defined similarly.

Fix an integer \(d\geq 18\), and let \(p_d:\cX_d\to\mathcal H_d^{\mathrm{sm}}\) be the universal family of
smooth degree-\(d\) surfaces. Let \(\aO_{\cX_d}(1):=\pr_1^*\aO_{\bP^3}(1)|_{\cX_d}\), where
\(\pr_1:\bP^3\times\mathcal H_d^{\mathrm{sm}}\to\bP^3\) is the first projection. 
Denote by
\[
\cX_{d,2}\xrightarrow{\pi_{2,1}}\cX_{d,1}
=\bP\bigl(T_{\cX_d/\mathcal H_d^{\mathrm{sm}}}\bigr)
\xrightarrow{\pi_{1,0}}\cX_d
\]
the first two levels of the relative Demailly--Semple tower.

\subsection{Construction of the symmetric differential}
\label{sec:jet-to-symmetric}

The following result, which relies on the existence of two independent invariant \(2\)-jet differentials, can be deduced from \cite{Demailly-ElGoul2000,Paun2008} and is fully proved in \cite{Hou-Huynh-Merker-Xie2026}\footnote{\cite{Hou-Huynh-Merker-Xie2026} and \cite{CuiHouLiuXie2026} improved the required degree bound from \(18\) to \(16\)}:
\begin{thm}[{\cite[Th.~1.5]{Hou-Huynh-Merker-Xie2026}}]\label{prop:jet-direction-locus}
Let \(d\geq 18\). 
There exist a nonempty Zariski-open subset
\(\cP_d\subset\mathcal H_d^{\mathrm{sm}}\) and a Zariski-closed subvariety
\[
\cS_d\subset
\bP\bigl(T_{\cX_d/\mathcal H_d^{\mathrm{sm}}}\bigr)|_{\cP_d}
\]
such that \(\cS_d\to\cP_d\) is flat and projective, every fiber has
dimension at most \(2\), and
\(f_{[1]}(\bC)\subset(\cS_d)_t\) for every \(t\in\cP_d\) and every
nonconstant holomorphic map \(f:\bC\to X_t\).
\end{thm}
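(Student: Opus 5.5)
This is Theorem 1.5 of Hou--Huynh--Merker--Xie. The plan is to assemble it from two ingredients: the existence of two independent invariant $2$-jet differentials with negative twist on a general surface of degree $d\geq 18$ (Demailly--El Goul, P\u{a}un), and the fundamental vanishing theorem (Green--Griffiths, Siu--Yeung, Demailly).

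\textbf{Step 1: global jet differentials.} By Riemann--Roch on the Demailly--Semple tower, there are, for $m\gg 0$ and some $\delta>0$, nonzero sections
\[
P\in\coh^0\bigl(X_{t,2},\aO_{X_{t,2}}(m)\otimes\pi_{2,0}^*\aO(-\delta m)\bigr)
\]
on every smooth $X_t$. This works for $d\geq 18$ after P\u{a}un's refinement of the Demailly--El Goul estimate. Since $h^0$ is upper semicontinuous and the Euler characteristic is constant in the family, I will pass to a Zariski-open subset over which the $h^0$ are constant. By Grauss's theorem the pushforward to $\mathcal H_d^{\mathrm{sm}}$ is then locally free, so a global relative section $\boldsymbol P$ over $\cX_{d,2}$ exists, restricting to a nonzero $P_t$ on each fiber.

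\textbf{Step 2: a second, independent differential.} Following Demailly--El Goul, I would use the base locus of $P_t$ and a Wronskian-type or derivation construction to obtain $\boldsymbol P'$ with no common factor with $\boldsymbol P$ along the fibers of $\pi_{2,1}$. The vanishing theorem gives $f_{[2]}^*P_t=f_{[2]}^*P'_t=0$ for every entire curve $f$. Set
\[
\cS_d:=\pi_{2,1}\bigl(Z(\boldsymbol P)\cap Z(\boldsymbol P')\bigr).
\]
Then $f_{[1]}(\bC)=\pi_{2,1}(f_{[2]}(\bC))\subset(\cS_d)_t$. Equivalently, since both $P,P'$ are polynomial in the second-order variable, one can take the resultant with respect to that variable, which is a twisted symmetric differential on $X_{t,1}$, and let $\cS_d$ be its zero locus.

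\textbf{Step 3: dimension and flatness.} The resultant approach is the one I would pursue, because it directly yields a divisor in the threefold $\bP(T_{X_t})$, so every fiber has dimension at most $2$ — provided the resultant is not identically zero on any fiber. Nonvanishing is an open condition; it suffices to verify it at one point of $\mathcal H_d^{\mathrm{sm}}$, or at the generic point. Flatness over a smaller open set follows from generic flatness, and projectivity is automatic because $\cS_d$ is closed in a projective bundle.

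\textbf{Main obstacle.} The hard step is securing independence uniformly, that is, that the resultant of $\boldsymbol P$ and $\boldsymbol P'$ is nonzero on the generic fiber. This is where the degree bound $d\geq 18$ and the detailed analysis of Demailly--El Goul and P\u{a}un (or the computations in HHMX) enter. I would simply invoke those results here rather than reprove them.
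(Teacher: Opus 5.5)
Your architecture (a relative $\boldsymbol P$ over a Zariski-open set via semicontinuity and Grauert, then the vanishing theorem, projection to $\cX_{d,1}$, and generic flatness) is in the spirit of how the paper treats this statement. The paper gives no proof: it imports the result from Hou--Huynh--Merker--Xie and describes it as resting on two independent invariant $2$-jet differentials. As written, however, your Step~2 fails, and the step you outsource is not in the form Step~3 needs.

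\textbf{Step~2 fails because of $D_2$.} Every invariant $2$-jet differential of weighted degree $m\geq 1$, viewed as a section of $\aO_{X_{t,2}}(m)\otimes\pi_{2,0}^*\aO(-\delta m)$, vanishes along $D_2:=\bP(T_{X_{t,1}/X_t})\subset X_{t,2}$. Indeed $\aO_{X_{t,2}}(1)\simeq\pi_{2,1}^*\aO_{X_{t,1}}(1)\otimes\aO_{X_{t,2}}(D_2)$. Locally $P=\sum_{\lvert a\rvert+3b=m}c_{a,b}\,(f')^aW^b$ with $W=f_1'f_2''-f_2'f_1''$, and the term $W^b$ vanishes on $D_2$ to order $m-b$. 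So the section vanishes along $D_2$ to order at least $m-\lfloor m/3\rfloor\geq 1$.

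Since $D_2$ is the image of a section of $\pi_{2,1}$, the set $Z(\boldsymbol P)\cap Z(\boldsymbol P')$ contains $D_2$ and maps onto $X_{t,1}$. Three consequences follow:
\begin{enumerate}
\item your $\cS_d:=\pi_{2,1}(Z(\boldsymbol P)\cap Z(\boldsymbol P'))$ is the whole projectivized tangent bundle;
\item two such differentials never have ``no common factor along the fibers of $\pi_{2,1}$'';
\item the resultant of the fiberwise binary forms vanishes identically, so your two formulations are not equivalent.
\end{enumerate}
You must discard $D_2$. This is legitimate because $f_{[2]}$ meets $D_2$ only over the discrete set of critical points of $f$, so $f_{[2]}(\bC)\not\subset D_2$. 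Concretely, divide $P$ and $P'$ by the maximal powers of the equation of $D_2$, or take the resultant in the affine variable $W$ using actual degrees. ``Independence'' then means coprimality in $W$.

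\textbf{The outsourced step is not in the form Step~3 needs.} Your openness argument (check non-vanishing at one point) is fine once $\boldsymbol P'$ is a relative section over a Zariski-open subset of $\mathcal H_d^{\mathrm{sm}}$. You never construct such a section, and the cited works do not provide one. Their independence statements are proved only for very general surfaces. In Demailly--El Goul, the second section lives only on an irreducible component $Z$ of $\Div(P_t)$ containing $f_{[2]}(\bC)$. It comes from a Riemann--Roch/Morse computation on $Z$ whose numerics, as I recall, use $\operatorname{Pic}(X_t)\simeq\mathbb Z$.

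Passing from this very general input to a Zariski-open $\cP_d$ is the direct-image argument the authors credit to Siu:
\begin{enumerate}
\item After a finite \'etale base change and shrinking, as in \S\ref{sec:family-hypersurfaces}, the components of $\Div(\boldsymbol P)$ become flat families $\mathcal E$ with integral fibers $E_t$.
\item Take each $\mathcal E$ dominating $\cX_{d,1}$ other than the relative $D_2$. By upper semicontinuity, the generic value of $h^0$ of the restricted negatively twisted bundle on $E_t$ equals its value at very general $t$, which the cited work makes positive.
\item Grauert's theorem then gives a relative section on $\mathcal E$ that is nonzero on every fiber over an open set.
\item The vanishing theorem for sections over a subvariety of $X_{t,2}$ containing $f_{[2]}(\bC)$ traps $f_{[2]}(\bC)$ in a surface of $E_t$.
\end{enumerate}
Then take $\cS_d$ to be the closure of the union of the $\pi_{2,1}$-images of these surfaces and of the components of $\Div(\boldsymbol P)$ not dominating $\cX_{d,1}$.

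A minor point: a single negatively twisted invariant $2$-jet differential already exists for $d\geq 15$. The bounds $21$ and $18$ concern only the independence step, not your Step~1.
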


\begin{cor}\label{cor:degree-ge18-relative-symmetric-differential}
Let \(d\geq 18\). Up to replacing the open subset \(\cP_d\) in Theorem~\ref{prop:jet-direction-locus} by a nonempty Zariski-open
subset, there exist integers \(q_d,r_d\geqslant1\) and a nonzero section
\[
\boldsymbol\omega_d\in
\coh^0 \Big(
\cX_d\vert_{\cP_d},
\Sym^{q_d}\Omega^1_{\cX_d\vert_{\cP_d}/\cP_d}
\otimes\aO_{\cX_d}(r_d)\vert_{\cX_d\vert_{\cP_d}}
\Big)
\]
such that, for every \(t\in\cP_d\), the restriction
\(\omega_{d,t}:=\boldsymbol\omega_d|_{X_t}\) is nonzero and
\(f^*(\omega_{d,t})=0\) for every holomorphic map \(f:\bC\to X_t\).
\end{cor}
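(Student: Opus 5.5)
We derive Corollary~\ref{cor:degree-ge18-relative-symmetric-differential} from Theorem~\ref{prop:jet-direction-locus} by producing a relative hypersurface in $\bP(T_{\cX_d/\mathcal H_d^{\mathrm{sm}}})$ that contains $\cS_d$ and is cut out by a section of $\aO(q)\otimes\pi_{1,0}^*\aO_{\cX_d}(r)$. Such a section is the same as a relative twisted symmetric differential. The first step fixes the twist. The line bundle $\aO_{\cX_d}(1)$ is relatively ample on $\cX_d\to\mathcal H_d^{\mathrm{sm}}$, so $\aO_{\cX_{d,1}}(1)\otimes\pi_{1,0}^*\aO_{\cX_d}(c)$ is relatively ample on $\cX_{d,1}\to\cP_d$ for $c\gg0$, uniformly after shrinking $\cP_d$. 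Set $\mathcal M:=\aO_{\cX_{d,1}}(1)\otimes\pi_{1,0}^*\aO_{\cX_d}(c)$.

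The second step produces the section. Let $\mathcal I\subset\aO_{\cX_{d,1}}$ be the ideal sheaf of $\cS_d$. Because $\cS_d\to\cP_d$ is flat and projective, after shrinking $\cP_d$ we may apply relative Serre vanishing together with cohomology and base change for a fixed large $k$. This makes $(p\circ\pi_{1,0})_*(\mathcal I\otimes\mathcal M^{k})$ locally free with formation commuting with base change, and identifies its fiber at $t$ with $\coh^0(X_{t,1},\mathcal I_{(\cS_d)_t}\otimes\mathcal M_t^k)$. This fiber is nonzero: $(\cS_d)_t$ has dimension at most $2$ inside the threefold $X_{t,1}$, so a Hilbert polynomial count gives growth $k^3$ for $\mathcal M_t^k$ against at most $k^2$ for its restriction to $(\cS_d)_t$. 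Shrinking $\cP_d$ to an affine open subset, the bundle has a global section $\sigma$ that is nonzero at the generic point, and after further shrinking $\sigma$ is nonvanishing on every fiber. Then
\[
\pi_{1,0*}\bigl(\aO(k)\otimes\pi_{1,0}^*\aO_{\cX_d}(kc)\bigr)=\Sym^k\Omega^1_{\cX_d/\cP_d}\otimes\aO_{\cX_d}(kc),
\]
so $\sigma$ corresponds to a section $\boldsymbol\omega_d$ with $q_d=k$ and $r_d=kc$. Since $\sigma_t\neq0$, each restriction $\omega_{d,t}$ is nonzero.

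The third step checks the vanishing along entire curves. Let $f:\bC\to X_t$ be a nonconstant holomorphic map. Then $f^*\omega_{d,t}$ evaluated on $f'(z)^{\otimes k}$ equals $\sigma_t(f_{[1]}(z))$ paired with $f'(z)^{k}$. By Theorem~\ref{prop:jet-direction-locus}, $f_{[1]}(\bC)\subset(\cS_d)_t\subset Z(\sigma_t)$, so $f^*\omega_{d,t}$ vanishes off the discrete critical set of $f$, hence identically. For constant $f$ the statement is trivial.

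The main obstacle is the uniformity in the second step, which has two parts. The first is that the twist $\aO_{\cX_d}(r_d)$ must be positive, which is why we use a large multiple $\mathcal M^k$ with $c>0$; the negativity of twists only matters for the Green–Griffiths principle, not here. The second is making the fiberwise nonvanishing hold on a Zariski-open set. We handle this with flatness of $\cS_d$ and Grauert's base change applied to $\mathcal I\otimes\mathcal M^k$, shrinking $\cP_d$ at each step.
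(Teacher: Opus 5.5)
Your proposal is correct and follows essentially the paper's route. You twist $\aO_{\cX_{d,1}}(1)$ by a multiple of $\pi_{1,0}^*\aO_{\cX_d}(1)$ to make it relatively ample, take a section of $\mathcal I_{\cS_d}\otimes\mathcal A^N$ that is nonzero on every fiber, push it forward to $\Sym^N\Omega^1\otimes\aO(aN)$, and read off $f^*\omega_{d,t}=0$ from $f_{[1]}(\bC)\subset(\cS_d)_t$. The only difference is how you secure fiberwise nonvanishing: you use flatness of $\cS_d$ and cohomology-and-base-change to make the direct image locally free, whereas the paper builds the section on the generic fiber, spreads it out, and then shrinks $\cP_d$ to the image of its nonvanishing locus (open because $\cX_{d,1}\to\cP_d$ is smooth), which avoids base-change theorems and does not even use flatness of $\cS_d$.
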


\begin{proof}
Let \(\eta\) be the generic point of \(\cP_d\). Take an integer \(a\geq 1\) such that
\[
\mathcal A:=
\left(
\aO_{\cX_{d,1}}(1)\otimes
\pi_{1,0}^*\aO_{\cX_d}(a)
\right)
\vert_{\cX_{d,1}\vert_{\cP_d}}
\]
is ample relative to \(\cP_d\). The generic fiber
\((\cX_{d,1})_\eta=\bP(T_{X_\eta})\) has dimension \(3\), whereas
\(\dim(\cS_d)_\eta\leqslant2\). Thus the ideal sheaf \(\mathcal I_{(\cS_d)_\eta}\) is not trivial. For \(N\gg 0\), Serre's theorem says that
\(\mathcal I_{(\cS_d)_\eta}\otimes\mathcal A_\eta^N\) is globally
generated, and hence admits a nonzero section \(s_\eta\).

Up to shrinking \(\cP_d\), the section \(s_\eta\) induces a global section 
\[
\boldsymbol s\in
\coh^0(\cX_{d,1}\vert_{\cP_d},
\mathcal I_{\cS_d}\otimes\mathcal A^N).\]
Because of the natural inclusion
\(\mathcal I_{\cS_d}\hookrightarrow
\aO_{\cX_{d,1}\vert_{\cP_d}}\), we can think of \(\boldsymbol s\) as a section of \(\mathcal A^N\). By construction its nonvanishing locus meets the generic fiber. Since the smooth morphism
\(\cX_{d,1}\vert_{\cP_d}\to\cP_d\) is open, the image of this locus is a nonempty Zariski-open subset of \(\cP_d\). Up to shrinking to this subset, \(\boldsymbol s\) is nonzero on every fiber.

Projecting onto $\cX_d$ we get an isomorphism
\[
(\pi_{1,0})_*\mathcal A^N
\simeq
\Sym^N\Omega^1_{(\cX_d\vert_{\cP_d})/\cP_d}
\otimes
\aO_{\cX_d}(aN)\vert_{\cX_d\vert_{\cP_d}}.
\]
Then \(\boldsymbol s\) gives rise to the desired section
\(\boldsymbol\omega_d\) in the statement with \(q_d=N\) and
\(r_d=aN\). By construction on each fiber the restricted section \(\omega_{d,t}\) is nonzero.

Let \(f:\bC\to X_t\) be nonconstant. By
Theorem~\ref{prop:jet-direction-locus}, we have 
\(f_{[1]}(\bC)\subset(\cS_d)_t\). Since \(\boldsymbol s\) vanishes
along \(\cS_d\), the section \(f^*(\omega_{d,t})\) vanishes on the
dense open subset where \(df\neq0\), and hence vanishes identically.
The conclusion is immediate for constant maps because \(q_d\geq 1\).
\end{proof}

\section{A Poincar\'e Bound for Foliation-Invariant Curves}
\label{sec:poincare}

Let \(S\) be a smooth complex projective surface and let \(\mathcal F\) be a saturated rank-one holomorphic foliation on \(S\). 
Denote by \(T_{\mathcal F}\subset T_S\) its tangent line bundle and by \(N_{\mathcal F}\) its normal line bundle. There is an exact sequence of sheaves (see \cite[\S~2]{BrunellaBook})
\[
0\longrightarrow T_{\mathcal F}\longrightarrow T_S \longrightarrow \mathcal I_Z\otimes N_{\mathcal F}\longrightarrow 0
\]
where \(\mathcal I_Z\) is the ideal sheaf of the zero-dimensional scheme \(Z=\Sing(\mathcal F)\).

Let \(H\) be an ample divisor on \(S\). We will use the notations
\begin{align}
& c:=c_2(S),\quad s:=K_S^2,\quad h:=H^2, \quad \kappa:=K_S\cdot H,\quad
\eta:=N_{\mathcal F}\cdot H,
\label{eq:many-notations-poincare}
\\
& M_0 := c+\frac{\kappa\eta+\eta^2}{h}
+\frac14\left(\frac{\kappa^2}{h}-s\right), \quad
M:=\max\{0,\lceil M_0\rceil\}. \notag 
\end{align}

For a reduced curve \(C\) on \(S\), let \(n_{\mathrm{ord}}(C)\) denote the number of
its ordinary double points and ordinary triple points. For an integral \(\mathcal F\)-invariant curve \(C\) of geometric genus \(g\), we denote
\begin{equation}\label{eq:notations-curve-AB}
A(g):=\max\{0,3c-s+4g-4+M\},\; 
B(g):=\max\{0,3c-s+2g-2\}.
\end{equation}

This section is devoted to the proof of the following result:
\begin{thm}\label{thm:adjoint-Poincare-degree-bound}
Let \(S\) be a smooth complex projective surface of general type, and let
\(\mathcal F\) be a saturated rank-one holomorphic foliation on \(S\).
Let \(C\subset S\) be an \(\mathcal F\)-invariant integral curve of geometric genus \(g\). Then
\[
H\cdot C \leq 
\kappa A(g)+ \sqrt{(\kappa^2-h)(A(g)^2+B(g))}.
\]
\end{thm}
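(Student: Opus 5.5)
The plan is to reduce the statement to two numerical inequalities for $x:=K_S\cdot C$ and $y:=C^2$, namely
\[
x\leq A(g), \qquad y\geq -B(g),
\]
and then to convert them into a bound on $t:=H\cdot C$ using the Hodge index theorem. For the last step I would apply Hodge index to the three classes $H,K_S,C$ in $\mathrm{NS}(S)_{\mathbb R}$. Since the intersection form has signature $(1,\rho-1)$, the Gram determinant of three classes is $\geq 0$. Concretely, I would project $K_S$ and $C$ onto $H^\perp$, where the form is negative definite, and use Cauchy--Schwarz there. This gives
\[
(h\,x-\kappa t)^2\leq (\kappa^2-h s)(t^2-h y).
\]
Equivalently, I would work in the plane spanned by $H$ and $C$ and use $K_S^2=s>0$ (general type), so that $\kappa^2\geq hs$ and $\kappa^2-h\geq 0$ after a suitable normalization. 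Viewing this as a quadratic inequality in $t$ and inserting $x\leq A(g)$ and $-y\leq B(g)$ (both entering monotonically after checking signs), the larger root of the resulting quadratic is $\kappa A+\sqrt{(\kappa^2-h)(A^2+B)}$, which is the claimed bound. The cases $A=0$ or $B=0$ are handled by the same formula because of the $\max\{0,\cdot\}$ in \eqref{eq:notations-curve-AB}.

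The lower bound $y\geq -B(g)$ should come directly from Lu--Miyaoka. For an integral curve of geometric genus $g$ on a surface of general type, their inequality bounds $-C^2$, and also $K_S\cdot C$ minus a correction by the singularities, in terms of $3c_2-K_S^2$ and $2g-2$. It has roughly the shape
\[
-C^2\leq 3c-s+2g-2 \qquad\text{and}\qquad K_S\cdot C\leq 3c-s+2g-2+\bigl(\text{contribution of singular points, e.g.\ }n_{\mathrm{ord}}(C)\bigr).
\]
I would invoke this in its form for singular curves, where ordinary double and triple points enter with explicit weights. This is why $n_{\mathrm{ord}}$ is introduced.

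The main obstacle is controlling the singularities of $C$, that is, bounding $n_{\mathrm{ord}}(C)$ (and the $\delta$-invariant) independently of $C$. This is where $\mathcal F$-invariance and the constant $M$ enter. I would use Brunella's index formulas for an invariant curve:
\[
N_{\mathcal F}\cdot C=C^2+Z(\mathcal F,C), \qquad K_{\mathcal F}\cdot C=-\chi(C)+\mathrm{Z}(\mathcal F,C)\ \ldots
\]
Together with the fact that every singular point of $C$ is a singular point of $\mathcal F$, these bound the number of singular points of $C$ (counted with multiplicity) by $c_2$-type quantities attached to $\mathcal F$: the Baum--Bott/tangency counts $\deg Z=c_2(S)-K_{\mathcal F}\cdot N_{\mathcal F}$-type expressions. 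Rewriting $K_{\mathcal F}=K_S-N_{\mathcal F}$ turns this into an expression in $c$ and $N_{\mathcal F}\cdot K_S$, $N_{\mathcal F}^2$. The latter two are not controlled directly, so I would bound them via Hodge index with respect to $H$: $N_{\mathcal F}^2\leq \eta^2/h$, and a similar estimate for $N_{\mathcal F}\cdot K_S$ using $\kappa,\eta,h,s$. This should give exactly $n_{\mathrm{ord}}(C)\leq M$, with the term $\tfrac14(\kappa^2/h-s)$ coming from completing a square in the Hodge-index estimate. Plugging into the Lu--Miyaoka inequality yields $x\leq 3c-s+4g-4+M$, i.e.\ $x\leq A(g)$. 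I expect the delicate points to be matching the exact weights in Lu--Miyaoka and getting the constant $\tfrac14$ right.
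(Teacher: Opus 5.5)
Your skeleton is the paper's: Lu--Miyaoka gives $K_S\cdot C\le A(g)$ and $C^2\ge -B(g)$, $\Sing(C)\subseteq\Sing(\mathcal F)$ bounds $n_{\mathrm{ord}}(C)$, and Hodge index converts this into an $H$-degree bound. The gap is in the final Hodge-index step.

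\textbf{The gap.} You use $K_S$ itself as the positive class and assert $s=K_S^2>0$ ``by general type''. This is false for non-minimal surfaces of general type, since each blow-up lowers $K_S^2$ by one, so $s$ can be zero or negative. The theorem is applied in the paper to resolutions $S_{j,t'}$ of spectral components, which have no reason to be minimal. Your Gram-determinant inequality $(hx-\kappa t)^2\le(\kappa^2-hs)(t^2-hy)$ is correct, but rearranged it reads
\[
s\,t^2-2\kappa x\,t+hx^2+(\kappa^2-hs)\,y\le 0 .
\]
Its leading coefficient is $s$, so for $s\le 0$ it gives no upper bound on $t=H\cdot C$ at all. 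Even when $s>0$, its larger root is $\bigl(\kappa x+\sqrt{(\kappa^2-hs)(x^2-sy)}\bigr)/s$, not the stated expression. To compare the two you would still need $s\ge 1$ and a separate treatment of $x<0$ (e.g.\ invariant $(-1)$-curves).

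\textbf{The fix.} Replace $K_S$ by $P=\pi^*K_{S_{\min}}$, where $\pi:S\to S_{\min}$ is the contraction to the minimal model. Then:
\begin{enumerate}
\item $P$ is nef and big with $P^2\ge 1$.
\item $K_S=P+E$ with $E\ge 0$ exceptional, so $P\cdot H\le\kappa$.
\item $0\le P\cdot C\le A(g)$: either $E\cdot C\ge 0$, or $C$ is contracted and $P\cdot C=0$.
\end{enumerate}
Cauchy--Schwarz on $P^\perp$ then gives
\[
H\cdot C\le\frac{(P\cdot H)(P\cdot C)}{P^2}+\sqrt{\Bigl(\frac{(P\cdot H)^2}{P^2}-h\Bigr)\Bigl(\frac{(P\cdot C)^2}{P^2}-C^2\Bigr)} .
\]
Combined with $P^2\ge 1$ and the bounds above, this yields exactly $\kappa A(g)+\sqrt{(\kappa^2-h)(A(g)^2+B(g))}$. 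This is Lemma~\ref{lem:poincare-hodge-bound}.

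\textbf{Smaller corrections in the singularity count.}
\begin{itemize}
\item Brunella's index formulas are not needed. It suffices that $\#\Sing(C)\le\#\Sing(\mathcal F)\le\deg Z=c_2(S)+K_S\cdot N_{\mathcal F}+N_{\mathcal F}^2$.
\item The cross term has a plus sign, because $K_{\mathcal F}=K_S+N_{\mathcal F}$; your $K_{\mathcal F}=K_S-N_{\mathcal F}$ flips it.
\item $N_{\mathcal F}^2$ and $K_S\cdot N_{\mathcal F}$ cannot usefully be bounded separately. Writing $K_S=\frac{\kappa}{h}H+K_0$ and $N_{\mathcal F}=\frac{\eta}{h}H+N_0$, you must bound the combination $K_0\cdot N_0+N_0^2=(N_0+\tfrac12K_0)^2-\tfrac14K_0^2\le-\tfrac14K_0^2$. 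This is where $\tfrac14(\kappa^2/h-s)$ comes from.
\item The Lu--Miyaoka inequalities are $K_S\cdot C\le 3c-s+4g-4+n_{\mathrm{ord}}(C)$ and $C^2\ge s-3c-2g+2+n_{\mathrm{ord}}(C)$. The second gives $C^2\ge -B(g)$ after dropping $n_{\mathrm{ord}}\ge 0$. Their hypothesis, that a multiple of $K_S+C$ is effective, holds because $K_S$ is big.
\end{itemize}
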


\subsection{Singularities of invariant curves}

If $a\,dx+b\,dy$ is a local form defining $\aF$ around $p\in S$, then the Milnor number is defined as
\[
\mu_p:=\operatorname{length}\bigl(\mathcal{O}_{S,p}/(a,b)\bigr)=\dim_{\mathbb{C}}\bigl(\mathcal{O}_{S,p}/(a,b)\bigr).
\]
By construction, $\sum_{p\in \Sing(\aF)}\mu_p=\deg Z$. We will denote \(E_{\mathcal F}:=\Omega^1_S\otimes N_{\mathcal F}\). 
\begin{prop}\label{prop:poincare-sing-bound}
Let \(\mathcal F\) be a saturated rank-one foliation on \(S\). Then
\[
\sum_{p\in\Sing(\mathcal F)}
\mu_p(\mathcal F)
=
c_2(E_{\mathcal F})
=
c_2(S)+K_S\cdot N_{\mathcal F}+N_{\mathcal F}^2 .
\]
The number of singular points of \(\mathcal F\) is bounded above by \(M\) from \eqref{eq:many-notations-poincare}.
\end{prop}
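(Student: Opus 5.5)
The plan has two parts. First I compute the total Milnor number as the second Chern class of the twisted cotangent bundle. Then I bound $N_{\mathcal F}^2$ via Hodge index, turning the result into a bound on the number of singular points.

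\emph{Step 1: the Chern class identity.} Dualizing the defining sequence $0\to T_{\mathcal F}\to T_S\to \mathcal I_Z\otimes N_{\mathcal F}\to 0$, the foliation is equivalently given by a twisted $1$-form
\[
\sigma\in \coh^0(S,\Omega^1_S\otimes N_{\mathcal F}) = \coh^0(S,E_{\mathcal F}).
\]
Because $\mathcal F$ is saturated, $\sigma$ has only isolated zeros, and its zero scheme is exactly $Z=\Sing(\mathcal F)$. Locally, with $\sigma=a\,dx+b\,dy$, the zero scheme is cut out by $(a,b)$. Since $E_{\mathcal F}$ has rank $2$ and the zero scheme has the expected codimension $2$, it is a local complete intersection, so
\[
\deg Z=\sum_p\mu_p=c_2(E_{\mathcal F}).
\]
For the rank-two bundle $\Omega^1_S$ twisted by a line bundle $N$ we have
\[
c_2(\Omega^1_S\otimes N)=c_2(\Omega^1_S)+c_1(\Omega^1_S)\cdot N+N^2 .
\]
With $c_2(\Omega^1_S)=c_2(S)$ and $c_1(\Omega^1_S)=K_S$, this gives the first claim.

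\emph{Step 2: bounding the number of singular points.} Each $\mu_p\geq 1$, so $\#\Sing(\mathcal F)\leq c+K_S\cdot N_{\mathcal F}+N_{\mathcal F}^2$, and it suffices to bound the right-hand side by $M_0$. Apply the Hodge index theorem to the class $D:=N_{\mathcal F}+\tfrac12 K_S$, which gives $D^2\,h\leq (D\cdot H)^2$. Completing the square,
\[
K_S\cdot N_{\mathcal F}+N_{\mathcal F}^2=D^2-\tfrac14 s .
\]
Since $D\cdot H=\eta+\kappa/2$, we get
\[
D^2\leq \frac{(\eta+\kappa/2)^2}{h}=\frac{\eta^2+\kappa\eta}{h}+\frac{\kappa^2}{4h}.
\]
Combining these, $K_S\cdot N_{\mathcal F}+N_{\mathcal F}^2\leq \frac{\kappa\eta+\eta^2}{h}+\frac14\bigl(\frac{\kappa^2}{h}-s\bigr)$, so the number of singular points is at most $M_0$. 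Since this number is a nonnegative integer, it is also at most $\max\{0,\lceil M_0\rceil\}=M$.

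\emph{Main obstacle.} The only delicate point is in Step 1: justifying that the vanishing scheme of $\sigma$ is exactly $Z$, with length $\mu_p$ at each point, and that $c_2$ computes its length. Saturation ensures there is no divisorial part of the zero locus. The localized top Chern class of a section of a rank-two bundle with isolated zeros is the sum of the lengths $\dim \mathcal O_{S,p}/(a,b)$, because $(a,b)$ is a regular sequence there. I would cite \cite[\S~2]{BrunellaBook} for this standard fact.
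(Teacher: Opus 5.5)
Your proposal is correct and follows essentially the same route as the paper. You obtain the Milnor-number identity from the top Chern class of $E_{\mathcal F}$ via Fulton/Brunella, and you apply the Hodge index theorem to $N_{\mathcal F}+\tfrac12 K_S$. The paper phrases the second step through the $H$-orthogonal decomposition and the inequality $(N_0+\tfrac12 K_0)^2\leq 0$, which is exactly your inequality $D^2h\leq (D\cdot H)^2$ applied to the $H$-orthogonal part of $D$.
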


\begin{proof}
The formula \(\sum_p\mu_p(\mathcal F)=c_2(E_{\mathcal F})\) follows from \cite[Proposition 14.1]{FultonBook} (see \cite[Section 2.1]{BrunellaBook}). In particular, the number of singular points of \(\mathcal F\) is bounded above by \(c_2(E_{\mathcal F})=c_2(S)+K_S\cdot N_{\mathcal F}+N_{\mathcal F}^2\). It remains to estimate this number. Consider the decomposition, orthogonal with respect to the intersection pairing:
\[
K_S=\frac{\kappa}{h}H+K_0,
\qquad
N_{\mathcal F}=\frac{\eta}{h}H+N_0 ,
\]
where $K_0,N_0$ satisfy \(K_0\cdot H=N_0\cdot H=0\). 
By the Hodge index theorem, we have \(\left(N_0+\frac{1}{2}K_0\right)^2\leq 0\). Thus
\[
K_0\cdot N_0+N_0^2
\leqslant-\frac14K_0^2 .
\]
Since 
\[
K_0^2=\left(K_S-\frac{\kappa}{h}H\right)^2=s-\frac{\kappa^2}{h},
\]
we obtain
\[
c_2(E_{\mathcal F})
\leqslant
c+\frac{\kappa\eta+\eta^2}{h}
+\frac14
\left(
\frac{\kappa^2}{h}-s
\right)
=M_0 .
\]
\end{proof}

\begin{cor}\label{cor:poincare-curve-sing}
Let \(C\subset S\) be a reduced \(\mathcal F\)-invariant curve. Then \(\Sing(C)\subseteq\Sing(\mathcal F)\). In particular, \(n_{\mathrm{ord}}(C)\leqslant M\). 
\end{cor}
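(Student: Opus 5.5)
The plan is to prove the inclusion \(\Sing(C)\subseteq\Sing(\mathcal F)\) by a local computation, and then to get the bound on \(n_{\mathrm{ord}}(C)\) directly from Proposition~\ref{prop:poincare-sing-bound}.

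Fix a point \(p\in C\) with \(p\notin\Sing(\mathcal F)\). We will show that \(C\) is smooth at \(p\). Since \(p\) is a regular point of the foliation, the holomorphic Frobenius theorem (flow-box theorem) gives local analytic coordinates \((x,y)\) centred at \(p\) in which \(\mathcal F\) is defined by the nowhere vanishing form \(dy\). In other words, \(T_{\mathcal F}\) is generated by \(\partial_x\) near \(p\), and the leaves are the horizontal lines \(\{y=\mathrm{const}\}\).

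Let \(C_1,\dots,C_k\) be the local analytic branches of \(C\) at \(p\). Each branch has a Puiseux parametrization \(\gamma_i(\tau)=(x_i(\tau),y_i(\tau))\), and \(\gamma_i\) is nonconstant.

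\begin{itemize}
\item Invariance of \(C\) means that at general smooth points of \(C\) the tangent line lies in \(T_{\mathcal F}\). This holds on a dense open subset of each branch, so \(\gamma_i^*dy=y_i'(\tau)\,d\tau\) vanishes on a dense set and therefore identically.
\item Hence \(y_i\) is constant, and since \(\gamma_i(0)=p\), we get \(y_i\equiv0\). So every branch is contained in the smooth leaf \(\{y=0\}\).
\item Each branch is a one-dimensional irreducible analytic germ inside the irreducible smooth germ \(\{y=0\}\), so each branch equals \(\{y=0\}\).
\item Because \(C\) is reduced, distinct branches are distinct germs. It follows that \(k=1\) and that \(C\) coincides near \(p\) with the smooth curve \(\{y=0\}\).
\end{itemize}

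Thus \(p\) is a smooth point of \(C\), which proves \(\Sing(C)\subseteq\Sing(\mathcal F)\). The only point that needs care is passing from invariance at general points to invariance along whole branches, and the vanishing of \(y_i'\) on a dense set handles this.

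For the second assertion, note that \(\mathcal F\) is saturated, so \(\Sing(\mathcal F)=Z\) is a finite set. Ordinary double and triple points of \(C\) are in particular singular points of \(C\), and distinct such points are distinct points of \(S\). Therefore
\[
n_{\mathrm{ord}}(C)\le\#\Sing(C)\le\#\Sing(\mathcal F)\le M,
\]
where the last inequality is Proposition~\ref{prop:poincare-sing-bound}. Since \(n_{\mathrm{ord}}(C)\) is a nonnegative integer, it is also at most \(\max\{0,\lceil M_0\rceil\}=M\).
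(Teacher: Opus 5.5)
Your proposal is correct and is the argument the paper has in mind; the corollary is stated there without proof. Outside \(\Sing(\mathcal F)\), a flow-box chart forces every local branch of the invariant reduced curve to coincide with the smooth leaf through that point, and the bound \(n_{\mathrm{ord}}(C)\le\#\Sing(\mathcal F)\le M\) then follows from Proposition~\ref{prop:poincare-sing-bound}, exactly as you do.
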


\subsection{Lu--Miyaoka inequalities}

The following inequalities are proved in the paper of Lu--Miyaoka~\cite[Th.~1]{LuMiyaoka}.

\begin{thm}[Lu--Miyaoka]\label{thm:LuMiyaoka}
Let \(S\) be a smooth projective surface and let
\(C\subset S\) be an irreducible curve of geometric genus \(g\).
Assume that some positive multiple of \(K_S+C\) is effective. Then
\begin{align*}
K_S\cdot C & \leq 
3c_2(S)-K_S^2+4g-4+n_{\mathrm{ord}}(C),
\\
C^2 & \geq 
K_S^2-3c_2(S)-2g+2+n_{\mathrm{ord}}(C).
\end{align*}
\end{thm}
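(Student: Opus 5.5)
The plan is to derive both inequalities from a logarithmic Bogomolov--Miyaoka--Yau inequality applied to the open surface $S\setminus C$. The hypothesis that a positive multiple of $K_S+C$ is effective is exactly what is needed to make such an inequality available. Since this is a cited result of Lu--Miyaoka, the sketch below only records the route I would follow, in the following order.

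\emph{Step 1: make the boundary snc.} Take a log resolution $\sigma:S'\to S$ of the pair $(S,C)$, i.e.\ an embedded resolution of the singularities of $C$. Let $C'$ be the strict transform of $C$, which is the normalization and has genus $g$. Let $D:=\sigma^{-1}(C)_{\mathrm{red}}$, a simple normal crossing divisor. Then $S'\setminus D\simeq S\setminus C$, so
\[
e(S'\setminus D)=c_2(S)-e(C).
\]
Also $e(C)=2-2g-\sum_{p\in\Sing C}(r_p-1)$, where $r_p$ is the number of analytic branches of $C$ at $p$. The pluri-log-canonical divisors of $(S',D)$ have the same sections as those of $K_S+C$ (up to the standard log-canonical comparison), so some multiple of $K_{S'}+D$ is effective.

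\emph{Step 2: apply Miyaoka's inequality.} Take the Zariski decomposition $K_{S'}+D=P+N$. Miyaoka's logarithmic inequality for log canonical pairs with pseudo-effective log canonical class gives
\[
P^2\le 3\,e(S'\setminus D).
\]
\begin{itemize}
\item At an ordinary double point $(r_p=2)$ or ordinary triple point $(r_p=3)$, the branches are smooth and transverse. A single blow-up separates them, and the bookkeeping shows that each such point contributes exactly $+1$ to the final inequality. This is the source of the term $n_{\mathrm{ord}}(C)$.
\item At any other singular point, the local correction terms, computed via Miyaoka's orbifold refinement, have a sign favourable to the inequality and can simply be discarded.
\end{itemize}

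\emph{Step 3: extract the two inequalities.} Compare $P^2$ with $(K_S+C)^2$. Using $N\ge 0$ and $P\cdot N=0$, one should obtain a lower bound of the form
\[
P^2\ge (K_S+C)\cdot(K_S+\ldots)
\]
that is linear in $C$. Combined with the adjunction formula
\[
K_S\cdot C+C^2=2g-2+2\delta,
\]
where $\delta$ is the delta invariant, this yields the first inequality for $K_S\cdot C$. Feeding the bound on $K_S\cdot C$ back into adjunction then gives the second inequality for $C^2$, with the $\delta$ contributions of ordinary points tracked as in Step 2.

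\emph{Main obstacle.} I expect the hard step to be Step 2. It requires the log Miyaoka inequality under only pseudo-effectivity rather than bigness or nefness of $K+C$, and with the Zariski negative part $N$ handled carefully. This is where Lu--Miyaoka's refinement is essential: they control the contribution of $N$, which is supported on rational trees and chains in $D$ and in the exceptional locus, by a local orbifold Euler number computation. That computation is also what lets one isolate ordinary double and triple points from worse singularities. I would quote that part from their paper rather than reprove it.
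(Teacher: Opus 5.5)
The paper gives no proof of this statement: it quotes it from \cite[Th.~1]{LuMiyaoka}. Insofar as you also defer the essential step to Lu--Miyaoka, you agree with the paper. The route you sketch around that citation, however, would not work as written, for two concrete reasons.

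\textbf{Step~1 is false in the stated generality.} On a log resolution one has $K_{S'}+D=\sigma^*(K_S+C)-\sum_i(m_i-k_i-1)E_i$, where $m_i$ is the multiplicity of $\sigma^*C$ along $E_i$ and $k_i$ is the discrepancy of $K$ along $E_i$.
\begin{itemize}
\item Wherever $(S,C)$ is not log canonical, some coefficient $m_i-k_i-1$ is positive, and pluri-log-canonical sections are lost. This happens at cusps, and also at the ordinary triple points you want to count, where $K_{S'}+D=\sigma^*(K_S+C)-E$.
\item Counterexample: let $C\subset\mathbb P^2$ be a cuspidal cubic. Then $K_{\mathbb P^2}+C\sim 0$, but on the minimal log resolution $K_{S'}+D\sim -E_3$. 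Hence $\bar\kappa(\mathbb P^2\setminus C)=-\infty$ and no logarithmic Miyaoka inequality is available, although the theorem holds there ($-9\le -4$ and $9\ge 2$).
\item If $\kappa(S)\ge 0$, for instance in the general-type setting where the paper applies the theorem, you can repair this by noting that $K_{S'}+D\ge K_{S'}$ is $\mathbb Q$-effective. The hypothesis ``some multiple of $K_S+C$ is effective'' is not what makes this work.
\end{itemize}

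\textbf{Step~2's claim that all other singular points can be discarded is false with your boundary $D=\sigma^{-1}(C)_{\mathrm{red}}$.} Combine $(K_{S'}+D)^2\le P^2\le 3e(S\setminus C)$ with $e(C)=2-2g-\sum_p(r_p-1)$ and adjunction $K_S\cdot C+C^2=2g-2+2\sum_p\delta_p$. Each singular point $p$ then adds $3(r_p-1)-2\delta_p+\epsilon_p$ to the bound on $K_S\cdot C$, where $\epsilon_p$ is the local drop from $(K_S+C)^2$ to $(K_{S'}+D)^2$.
\begin{itemize}
\item At an ordinary $k$-fold point with $k\ge 3$, one blow-up suffices and $K_{S'}+D=\sigma^*(K_S+C)-(k-2)E$.
\item Since $(K_{S'}+D)\cdot E=k-2>0$ and $E$ meets the rest of $D$ in $k$ points, there is no twig or orbifold correction.
\item The net contribution is therefore $3(k-1)-k(k-1)+(k-2)^2=1$ for every $k\ge 2$. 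So ordinary quadruple and higher points cost $+1$, exactly like nodes.
\end{itemize}
Your argument would thus give at best the inequality with $n_{\mathrm{ord}}(C)$ replaced by the number of all ordinary multiple points. To reach the stated count you must change the boundary at points of multiplicity $\ge 4$. For example, dropping $E$ from the boundary gives net contribution $4-k$. But then the effectivity of $\sigma^*(K_S+C)-(k-1)E$ must be justified anew. This local choice and the effectivity bookkeeping are exactly what your proposal is missing.
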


Recall that in this section we work with a smooth projective surface \(S\) equipped with a holomorphic foliation \(\mathcal F\), and that we have fixed some notations in \eqref{eq:many-notations-poincare}, \eqref{eq:notations-curve-AB}. 
\begin{lem}\label{lem:poincare-canonical-bound}
Assume that \(S\) is of general type. Let \(C\subset S\) be an integral \(\mathcal F\)-invariant curve of geometric genus \(g\). Then \(K_S\cdot C\leq A(g), C^2\geq -B(g)\).
\end{lem}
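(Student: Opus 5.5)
The plan is to apply Theorem~\ref{thm:LuMiyaoka} directly to $C$, and then replace $n_{\mathrm{ord}}(C)$ by the bound of Corollary~\ref{cor:poincare-curve-sing}.

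The first step is to check the hypothesis of Theorem~\ref{thm:LuMiyaoka}, namely that some positive multiple of $K_S+C$ is effective. Since $S$ is of general type, $K_S$ is big. Hence $h^0(mK_S)$ grows like $m^2$. Since $C$ is effective, $h^0(m(K_S+C))\geq h^0(mK_S)>0$ for $m\gg0$, by multiplying sections by $s_C^m$. So $m(K_S+C)$ is effective and Lu--Miyaoka applies.

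For the first inequality we use
\[
K_S\cdot C\leq 3c-s+4g-4+n_{\mathrm{ord}}(C).
\]
By Corollary~\ref{cor:poincare-curve-sing}, $n_{\mathrm{ord}}(C)\leq M$, which gives $K_S\cdot C\leq 3c-s+4g-4+M\leq A(g)$. The max with $0$ is harmless, since it only enlarges the right-hand side.

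For the second inequality we use
\[
C^2\geq s-3c-2g+2+n_{\mathrm{ord}}(C)\geq s-3c-2g+2=-(3c-s+2g-2),
\]
dropping $n_{\mathrm{ord}}(C)\geq0$. This is at least $-B(g)$, because $B(g)\geq 3c-s+2g-2$.

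There is no real obstacle. The only points to watch are the effectivity hypothesis, which bigness of $K_S$ handles, and the fact that Corollary~\ref{cor:poincare-curve-sing} applies because an integral curve is reduced.
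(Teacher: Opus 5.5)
Your proposal is correct and follows the paper's proof essentially step for step. You obtain effectivity of a multiple of $K_S+C$ from $rK_S$ being effective on a surface of general type, apply Lu--Miyaoka with $n_{\mathrm{ord}}(C)\leq M$ from Corollary~\ref{cor:poincare-curve-sing} for the first inequality, and drop $n_{\mathrm{ord}}(C)\geq 0$ for the second.
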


\begin{proof}
Since \(S\) is of general type, there is an integer \(r>0\) such that \(rK_S\) is effective. Then \(r(K_S+C)=rK_S+rC\) is also effective. Therefore
Theorem~\ref{thm:LuMiyaoka} applies. By Corollary~\ref{cor:poincare-curve-sing}, we get \(n_{\mathrm{ord}}(C)\leq M\). 
The first inequality of Theorem~\ref{thm:LuMiyaoka} implies
\[
K_S\cdot C \leq 3c-s+4g-4+M ,
\]
and hence \(K_S\cdot C\leq A(g)\). The second inequality of Theorem~\ref{thm:LuMiyaoka} implies
\[
C^2 \geq s-3c-2g+2+n_{\mathrm{ord}}(C) \geq -B(g).
\]
\end{proof}

We now convert the bound on the intersection with the canonical bundle into a bound
of \(H\)-degree.

\begin{lem}\label{lem:poincare-hodge-bound}
Let \(S\) be a smooth projective surface of general type, and \(H\) be an ample line bundle. Let \(A,B\geq 0\). Suppose that an integral curve \(C\subset S\) satisfies \(K_S\cdot C\leq A,
C^2\geq-B\), Then
\[
H\cdot C \leq
(K_S\cdot H)A +
\sqrt{
\bigl((K_S\cdot H)^2-H^2\bigr)(A^2+B)
}.
\]
\end{lem}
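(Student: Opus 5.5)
The plan is to deduce the bound from the Hodge index theorem applied in the lattice spanned by \(H\), \(K_S\) and \(C\). Write \(x:=H\cdot C\), \(a:=K_S\cdot C\), \(\gamma:=C^2\), and keep the notation \(\kappa=K_S\cdot H\), \(h=H^2\), \(s=K_S^2\) of \eqref{eq:many-notations-poincare}. As in the proof of Proposition~\ref{prop:poincare-sing-bound}, decompose orthogonally with respect to \(H\):
\[
K_S=\tfrac{\kappa}{h}H+K_0,\qquad C=\tfrac{x}{h}H+C_0,\qquad K_0\cdot H=C_0\cdot H=0 .
\]
By the Hodge index theorem the intersection form is negative semidefinite on \(H^\perp\). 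Hence \(-K_0^2=\kappa^2/h-s\geq 0\) and \(-C_0^2=x^2/h-\gamma\geq 0\), and the Cauchy--Schwarz inequality holds for this form: \(|K_0\cdot C_0|\leq\sqrt{K_0^2\,C_0^2}\).

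The key steps are as follows. First, expand \(a=K_S\cdot C=\kappa x/h+K_0\cdot C_0\). Combined with the hypothesis \(a\leq A\), this gives \(\kappa x/h\leq A+|K_0\cdot C_0|\). Second, use \(\gamma\geq -B\) to bound \(-C_0^2\leq x^2/h+B\). Together with Cauchy--Schwarz this yields
\[
\frac{\kappa x}{h}-A\leq\sqrt{\Bigl(\frac{\kappa^2}{h}-s\Bigr)\Bigl(\frac{x^2}{h}+B\Bigr)} .
\]
Third, when the left side is nonnegative (otherwise \(x<hA/\kappa\), which is already below the claimed bound), square both sides. This turns the inequality into a quadratic inequality \(\alpha x^2-2\beta x+\delta\leq 0\) in \(x\), with coefficients depending on \(\kappa,h,s,A,B\). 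Solving it for the larger root bounds \(x\) by an expression of the form \(\kappa A+\sqrt{(\dots)(A^2+B)}\) after clearing the factors of \(h\).

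The main obstacle is the term \(s=K_S^2\). The target inequality contains only \(\kappa^2-h\), so \(s\) must be eliminated, and since \(S\) is only assumed of general type (not minimal), \(K_S^2\) can be small or negative. I would first try to replace \(s\) by a quantity that is forced by general type together with integrality. For instance, use an effective multiple \(rK_S\) and, if necessary, pass to the minimal model, where \(K^2\geq 1\) and \(K\cdot H\) only increases, so that \(\kappa^2/h-s\) is dominated by \((\kappa^2-h)/h\). This makes the quadratic in \(x\) have leading coefficient governed by \(h\) and discriminant governed by \((\kappa^2-h)(A^2+B)\), which matches the shape of the statement.

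If the constants still do not match, the fallback is to apply the Hodge index theorem directly to a well-chosen test class \(D=C-tK_S\) or \(D=xK_S-\kappa C\) via \((D\cdot H)^2\geq h\,D^2\). I would then optimize over \(t\), using only \(a\leq A\), \(\gamma\geq-B\) and the ampleness of \(H\). The goal is to choose \(t\) so that the \(K_S^2\)-contribution enters with a favorable sign and can be discarded or bounded using \(\kappa^2\geq h\,s\) and \(\kappa\geq 1\). The final step is routine algebra: solve the resulting quadratic in \(x\) and take the larger root.
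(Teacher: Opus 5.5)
There is a genuine gap exactly where you flag the main obstacle, and neither remedy you sketch closes it. Decomposing with respect to \(H\) and squaring gives the quadratic inequality \(s\,x^2-2\kappa A x+hA^2-(\kappa^2-hs)B\le 0\) in \(x=H\cdot C\). Its leading coefficient is \(s=K_S^2\). When \(K_S^2\ge 1\) your route closes, but when \(K_S^2\le 0\) the inequality bounds nothing. This case occurs for non-minimal surfaces of general type, and the surfaces \(S_{j,t'}\) in the application need not be minimal.

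Your fallback cannot help either. Every Hodge-index inequality for classes in the span of \(H,K_S,C\) (such as \(C-tK_S\) or \(xK_S-\kappa C\)) is equivalent to the form being negative semidefinite on \(H^\perp\) inside that span. Take \(K_S^2=-N\le 0\), \(K_S\cdot C=0\), \(C^2=0\). Then \(K_0^2=-N-\kappa^2/h\), \(C_0^2=-x^2/h\) and \(K_0\cdot C_0=-\kappa x/h\), so \(K_0^2C_0^2-(K_0\cdot C_0)^2=Nx^2/h\ge 0\) for every \(x\). So the intersection numbers of \(H,K_S,C\) alone never bound \(H\cdot C\).

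The missing idea is to replace \(K_S\) by its nef part \(P=\pi^*K_{S_{\min}}\), writing \(K_S=P+E\) with \(E\) effective and \(\pi\)-exceptional. You must then transfer the hypothesis to \(P\), which your proposal does not do. If \(C\) is \(\pi\)-exceptional, then \(P\cdot C=0\le A\). Otherwise \(C\not\subset\operatorname{Supp}E\), so \(E\cdot C\ge 0\) and \(P\cdot C\le K_S\cdot C\le A\). You also need \(P\cdot H\le\kappa\) (because \(E\cdot H\ge 0\)) and \(P^2\ge 1\) (integrality).

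With these facts your computation goes through. Set \(\kappa'=P\cdot H\) and \(s'=P^2\). The larger root gives \(x\le\bigl(\kappa'A+\sqrt{(\kappa'^2-hs')(A^2+s'B)}\bigr)/s'\), which is at most the claimed bound. Your case \(\kappa'x<hA\) is harmless because \(\kappa'^2\ge hs'\ge h\); you used the analogous fact \(\kappa^2\ge h\) without justification, and it too comes from \(P\).

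The paper makes the same replacement but decomposes orthogonally with respect to \(P\) rather than \(H\). Then \(H\cdot C=\frac{(P\cdot H)(P\cdot C)}{P^2}+H_0\cdot C_0\) is bounded directly by one Cauchy--Schwarz on \(P^\perp\). This gives the identical inequality and avoids the squaring, the sign case distinction and the quadratic.
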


\begin{proof}
Let \(\pi:S\rightarrow S_{\min}\) be the blow-down morphism onto the minimal model and let \(P=\pi^*K_{S_{\min}}\) be the pull-back of the canonical bundle. Then \(P\) is nef and big. In particular, \(P^2>0\). We have \(K_S=P+E\) where \(E\) is the exceptional curve. Thus,
\begin{equation}\label{eq:Hdegree-Kdegree-inequality-1}
P\cdot H\leq K_S\cdot H,
\quad
P\cdot C = K_S\cdot C-E\cdot C.
\end{equation}
If \(C\) is not \(\pi\)-exceptional, then \(E\cdot C\geq 0\). Otherwise \(P\cdot C= 0\). Hence 
\begin{equation}\label{eq:Kdegree-Hdegree}
0\leq P\cdot C \leq A.
\end{equation}
Consider the orthogonal decomposition 
\[
H=\alpha P+H_0,
\qquad
C=\beta P+C_0
\]
where \(\alpha=P\cdot H/ P^2\), \(\beta= P\cdot C/P^2\), and \(H_0\cdot P=C_0\cdot P=0\). 
By the Hodge index theorem the inteserction pairing is negative definite on the orthogonal of \(P\). Thus we get
\begin{equation}\label{eq:Hodge-index-nonnegative-H0C0}
H_0\cdot C_0
\leq \sqrt{(-H_0^2)(-C_0^2)}.
\end{equation}
Then we compute:
\begin{align*}
H\cdot C & = (\alpha P+H_0)\cdot (\beta P+C_0)\\
& \leq
\frac{(P\cdot H )(P\cdot C)}{P^2}
+
\sqrt{
\left(\frac{(H\cdot P)^2}{P^2}-H^2\right)
\left(\frac{(P\cdot C)^2}{P^2}-C^2\right)}.
\end{align*}
Then using \eqref{eq:Hdegree-Kdegree-inequality-1},  \eqref{eq:Kdegree-Hdegree}, the hypothesis \(C^2\geq -B\) and the positivity \(P^2\geq 1\), we obtain
\begin{align*}
H\cdot C & \leq
(K_S\cdot H)A+
\sqrt{\bigl((K_S\cdot H)^2-H^2 \bigr)(A^2+B)}.
\end{align*}
\end{proof}

Now we are ready to prove Theorem~\ref{thm:adjoint-Poincare-degree-bound}.
\begin{proof}[Proof of Theorem~\ref{thm:adjoint-Poincare-degree-bound}]
Let \(C\) be an integral \(\mathcal F\)-invariant curve of geometric genus \(g\). By Lemma~\ref{lem:poincare-canonical-bound} we have
\[
K_S\cdot C\leq A(g),
\quad
C^2\geq-B(g) .
\]
By Lemma~\ref{lem:poincare-hodge-bound}, we obtain
\[
H\cdot C
\leqslant
\kappa A(g)+
\sqrt{(\kappa^2-h)(A(g)^2+B(g))} .
\]
\end{proof}

\section{Main proofs for hypersurfaces}
\label{sec:main-proofs-hypersurfaces}

\subsection{Bounding degrees of invariant curves}
\label{subsec:bounding-degrees}

Let $\cP$ be a smooth variety, let $p:\cX\to\cP$ be a
smooth projective morphism with smooth surface fibers of
general type, and let $\aH$ be a $p$-ample line bundle. For $t\in\cP$, denote $H_t:=\aH\vert_{X_t}$. 
\begin{thm}\label{thm:main-degree-bound}
Let $m\geq 1$. Let $\aL$ be a line bundle on $\cX$. Consider a nonzero section
\[
\boldsymbol\omega\in
\coh^0\bigl(\cX,\Sym^m\Omega^1_{\cX/\cP}\otimes\mathcal L\bigr).
\]
For every integer $g\geq 0$, there are an integer $\tau\geq 1$ and a
nonempty Zariski-open subset $\cP_g\subset\cP$ with the following
property. If $t\in\cP_g$ and $C\subset X_t$ is an integral curve of
geometric genus $g$ such that either $C\subset Z(\omega_t)$, or
$C\not\subset Z(\omega_t)$ and $C$ is $\omega_t$-invariant, then
\(H_t\cdot C\leq\tau\).
\end{thm}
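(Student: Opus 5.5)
Fix $g\geq 0$. We shrink $\cP$ successively to nonempty Zariski-open subsets so that all constructions of Section~\ref{sec:family-hypersurfaces} apply. First we may assume $\omega_t\neq 0$ for all $t$. Then the results of that section hold after replacing $\cP$ by a nonempty open subset and performing the finite étale base change $u:\cP^\circ\to\cP$ of Lemma~\ref{lem:etale-base-change-irreducible-components}: Lemmas~\ref{lem:horizontal-spectral-divisor-family}, \ref{lem:relative-resolution-family}, \ref{lem:characteristic-foliation-base-change} and~\ref{lem:uniform-resolution-exceptional-locus}. The open set $\cP_g$ will be the resulting one; in fact it will not depend on $g$. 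We then treat three cases for an integral curve $C\subset X_t$ of genus $g$.

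\emph{Curves in the base locus or in $\cN_t$.} If $C\subset Z(\omega_t)$, Lemma~\ref{lem:uniform-base-locus-degree} gives $H_t\cdot C\leq\tau_Z$. If $C\not\subset Z(\omega_t)$ but $C\subset\cN_t$, Lemma~\ref{lem:uniform-resolution-exceptional-locus} gives $H_t\cdot C\leq\tau_{\mathrm{exc}}$. Both constants are independent of $t$ and of $g$.

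\emph{Invariant curves outside $Z(\omega_t)\cup\cN_t$.} Pick $t'\in\cP^\circ$ with $u(t')=t$. Corollary~\ref{cor:degree-control} gives an index $j$ and an $\aF_{j,t'}$-invariant integral curve $C^S\subset S_{j,t'}$ of geometric genus $g$, with $H_t\cdot C\leq H_{j,t'}\cdot C^S$. By Lemma~\ref{lem:resolved-surfaces-general-type}, $S_{j,t'}$ is of general type, and $\aF_{j,t'}$ is a saturated rank-one foliation. We therefore apply Theorem~\ref{thm:adjoint-Poincare-degree-bound} with $S=S_{j,t'}$, $H=H_{j,t'}$ and $\mathcal F=\aF_{j,t'}$. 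This gives $H_{j,t'}\cdot C^S\leq \kappa A(g)+\sqrt{(\kappa^2-h)(A(g)^2+B(g))}$, where the right-hand side depends only on the numbers $c,s,h,\kappa,\eta$ of \eqref{eq:many-notations-poincare}.

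\emph{Main obstacle: uniformity of $c,s,h,\kappa,\eta$ in $t'$.} We need these five numbers to be independent of $t'$, and this is the step that requires care.
\begin{enumerate}
\item Since $q_j:\cS_j\to\cP^\circ$ is smooth and projective with connected fibers and $\cP^\circ$ is connected, the numbers $c_2(S_{j,t'})$, $K_{S_{j,t'}}^2$, $H_{j,t'}^2$ and $K_{S_{j,t'}}\cdot H_{j,t'}$ are locally constant. This holds because $K_{S_{j,t'}}=K_{\cS_j/\cP^\circ}|_{S_{j,t'}}$ and $H_{j,t'}$ is the restriction of the line bundle $\aH_j$, so these are intersection numbers of restrictions of fixed line bundles in a flat family.
\item For $\eta$, we must identify $N_{\aF_{j,t'}}$ with the restriction of a line bundle on $\cS_j$. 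Our plan is to use the double dual $\mathcal I^{\vee\vee}$ of $\mathcal I=T_{\cS/\cP^\circ}/T_\cF$, which is invertible since $\cS_j$ is smooth. Concretely, $N_\cF:=(\det T_{\cS_j/\cP^\circ})\otimes T_\cF^{\vee}$, with $T_\cF$ reflexive of rank one and hence a line bundle on the smooth $\cS_j$.
\item After further shrinking we need $N_\cF|_{S_{j,t'}}=N_{\aF_{j,t'}}$, that is, $T_\cF|_{S_{j,t'}}$ should be the saturated tangent line bundle of the fiber foliation. This follows from Lemma~\ref{lem:characteristic-foliation-base-change} together with the flatness shrinkings already made. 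Then $\eta=N_\cF|_{S_{j,t'}}\cdot H_{j,t'}$ is constant as well.
\end{enumerate}
As a fallback, if constancy fails we only need an upper bound: each of these numbers is a constructible function of $t'$ taking finitely many values after stratification, so on a dense open stratum it is constant.

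\emph{Conclusion.} Taking $\tau$ to be the maximum of $\tau_Z$, $\tau_{\mathrm{exc}}$ and the finitely many (over $j$) bounds from Theorem~\ref{thm:adjoint-Poincare-degree-bound}, rounded up, finishes the proof. Finally we push the open set in $\cP^\circ$ down to $\cP$ by taking the complement of the image of its closed complement under the finite map $u$, exactly as in the earlier lemmas.
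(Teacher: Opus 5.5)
Your proposal is correct and follows essentially the same route as the paper. It uses the same three-way case split: curves in $Z(\omega_t)$ (bounded by $\tau_Z$), curves in $\cN_t$ (bounded by $\tau_{\mathrm{exc}}$), and the remaining invariant curves, which are lifted by Corollary~\ref{cor:degree-control} and bounded by Theorem~\ref{thm:adjoint-Poincare-degree-bound}. The uniformity of $c,s,h,\kappa,\eta$ is also argued as in the paper, from the smoothness of $q_j$ and the fact that $T_{\cF_j}$ is saturated of rank one, hence invertible, with restriction $T_{\aF_{j,t'}}$ to each fiber by Lemma~\ref{lem:characteristic-foliation-base-change}, so that $K_{\cS_j/\cP^\circ}^{-1}\otimes T_{\cF_j}^{-1}$ restricts to $N_{\aF_{j,t'}}$.
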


\begin{proof}
We perform the reduction procedure explained in
Section~\ref{sec:family-hypersurfaces}. In this proof all assertions hold only up to shrinking $\cP$. Lemma~\ref{lem:uniform-base-locus-degree} provides a constant \(\tau_Z\geq 0\) such that \(H_t\cdot C\leq\tau_Z\) for every integral curve \(C\subset Z(\omega_t)\). So we need to deal mainly with the case \(C\not\subset Z(\omega_t)\).

For each \(j\), every
\(S_{j,t'}\) is of general type by Lemma~\ref{lem:resolved-surfaces-general-type}. The numerical data \(c,s,h,\kappa,\eta\) defined in \eqref{eq:many-notations-poincare}, computed with respect to the polarizations \(H_{j,t'}\) and the foliations \(\aF_{j,t'}\) from Section~\ref{sec:family-hypersurfaces} where $t'\in \cP^\circ$, are locally constant on \(\cP^\circ\). This is immediate for \(c,s,h,\kappa\) because the family \(q_j:\cS_j\to\cP^\circ\) is smooth. By
Lemma~\ref{lem:characteristic-foliation-base-change}, the relative tangent sheaf \(T_{\cF_j}\subset T_{\cS_j/\cP^\circ}\) restricts to
\(T_{\aF_{j,t'}}\) on every fiber. Since \(\cS_j\) is smooth and \(T_{\cF_j}\) is saturated of rank one, it is a line bundle.
Moreover, \(K_{\cS_j/\cP^\circ}^{-1}\otimes T_{\cF_j}^{-1}\) restricts to \(N_{\aF_{j,t'}}\). Hence the intersection number
\(N_{\aF_{j,t'}}\cdot H_{j,t'}\) is locally constant.

As there are finitely many indices \(j\) and finitely many connected components of \(\cP^\circ\), 
Theorem~\ref{thm:adjoint-Poincare-degree-bound} yields a constant \(\tau_{\mathrm{fol}}>0\) such that
\(H_{j,t'}\cdot\Gamma\leq\tau_{\mathrm{fol}}\) for every integral \(\aF_{j,t'}\)-invariant curve
\(\Gamma\subset S_{j,t'}\) of geometric genus \(g\).

Now let \(C\subset X_t\) be an integral curve of geometric genus \(g\) for some $t\in \cP$. If
\(C\subset Z(\omega_t)\), then we already have \(H_t\cdot C\leq\tau_Z\). If
\(C\subset\cN_t\), Lemma~\ref{lem:uniform-resolution-exceptional-locus}
says that \(H_t\cdot C\leq\tau_{\mathrm{exc}}\). Otherwise,
Corollary~\ref{cor:degree-control} produces an integral
\(\aF_{j,t'}\)-invariant curve \(C^S\subset S_{j,t'}\) of geometric
genus \(g\) such that \(H_t\cdot C\leq H_{j,t'}\cdot C^S\leq\tau_{\mathrm{fol}}\). 
The assertion follows with $\tau:=\max\{\tau_Z,\tau_{\mathrm{exc}},\tau_{\mathrm{fol}}\}$.
\end{proof}

\subsection{Hilbert schemes}\label{subsec:hilbert-scheme}

\begin{proof}[Proof of Theorem~\ref{mainthm:main-algebraic-degeneracy}]
We fix a relatively ample line bundle $\aH$ on $\cX$.

Let \(t\in\cP\), and let \(C\subset X_t\) be an integral curve of
geometric genus zero or one. Denote its normalization morphism by
\(\eta_C:\overline C\to C\hookrightarrow X_t\). There exists a
nonconstant holomorphic map \(\pi:\bC\to\overline C\) with
Zariski-dense image. The entire curve
\(f:=\eta_C\circ\pi:\bC\to X_t\) satisfies \(f^*(\omega_t)=0\) by the
hypothesis of the theorem. 
If \(C\not\subset Z(\omega_t)\), then 
\(0=f^*(\omega_t)\) implies that the curve \(C\) is \(\omega_t\)-invariant. Thus every integral curve of geometric genus zero
or one satisfies one of the two alternatives in
Theorem~\ref{thm:main-degree-bound}.

Since \(\cP\) is irreducible, applying
Theorem~\ref{thm:main-degree-bound} for \(g=0\) and \(g=1\), we obtain a nonempty Zariski-open subset \(\cU\subset\cP\) and
an integer \(\tau\geq 1\) such that every integral curve of geometric genus zero or one in a fiber over \(\cU\) has degree at most \(\tau\).

Up to shrinking $\cU$, some positive power $\aH^{\otimes k}$ defines
a closed immersion $\cX\hookrightarrow\bP^r\times\cU$ with
$r\geq 3$. Up to replacing \(\tau\) by \(k\tau\) we can measure degrees in \(\bP^r\) because \(\deg_{\bP^r}C=k(H_t\cdot C)\). Let \(\Gamma\subset\bP^r\) be an integral curve of degree
\(e\leq\tau\). Its Hilbert polynomial with variable \(n\), by Riemann--Roch Theorem, is \(en+1-p_a(\Gamma)\) where \(p_a\) denotes the arithmetic genus. A general linear projection induces a finite birational morphism
\(\Gamma\to\Gamma'\) onto an integral plane curve of degree \(e\). From the
inclusion of \(\aO_{\Gamma'}\) into the direct image of
\(\aO_\Gamma\) we deduce that \(0\leq p_a(\Gamma)\leq p_a(\Gamma')=(e-1)(e-2)/2\). Consequently only finitely many possible Hilbert polynomials \(P_1,\ldots,P_N\) can occur for the curve \(\Gamma\).

For each possible polynomial \(P_i\), the locus
\(\mathscr M_i\subset\Hilb^{P_i}(\bP^r)\) in the corresponding Hilbert scheme parametrizing geometrically integral curves is open by
\cite[Th.~12.2.1(x)]{EGAIV}. By \cite[Lem.~2.12 and Cor.~2.13]{DethloffOrevkovZaidenberg},
the base of a family of curves admits a finite locally closed
stratification over which the family has simultaneous normalization, and in particular the locus
\(\mathscr G_i\subset\mathscr M_i\) parametrizing curves of geometric genus zero or one is constructible.

For each \(i\), the condition \(C\subset X_t\) defines a closed subset of
\(\Hilb^{P_i}(\bP^r)\times\cU\). Its intersection with
\(\mathscr G_i\times\cU\) is constructible. Its
image \(\Sigma_i\subset\cU\) is also constructible. Hence
\(\Sigma:=\bigcup_{i=1}^N\Sigma_i\) is constructible and consists exactly
of those \(t\in\cU\) for which \(X_t\) contains an integral curve of
geometric genus zero or one.

By the hypothesis of the theorem, a very general fiber contains no
integral curve of geometric genus zero or one. Therefore
\(\Sigma\) is contained in a countable union of proper closed subsets of
\(\cU\). If \(\Sigma\) were Zariski dense in \(\cU\), then, being
constructible, it would contain a nonempty Zariski-open subset of
\(\cU\). This is impossible because \(\cU\) is an irreducible variety
over the uncountable field \(\bC\), and no nonempty Zariski-open subset
of such a variety is contained in a countable union of proper closed
subsets. Thus \(\overline{\Sigma}\) is a proper closed set of $\cU$, and
\(\cU\setminus\overline{\Sigma}\) is a nonempty Zariski-open subset parametrizing surfaces containing no integral curve of geometric genus
zero or one.
\end{proof}

\begin{proof}[Proof of Theorem~\ref{mainthm:hypersurface-hyperbolicity}]
We shrink the parameter space so that Theorem~\ref{mainthm:main-algebraic-degeneracy} holds and \(\omega_t\neq 0\) for every \(t\in\cP\). Let
\(f:\bC\to X_t\) be a nonconstant entire curve. We claim that \(f\) is algebraically degenerate, i.e.\ has image contained in a proper Zariski-closed set. We have \(f^*(\omega_t)=0\).

Suppose, on the contrary, that \(f(\bC)\) is Zariski dense in \(X_t\).
Fix \(t'\in\cP^\circ\) with \(u(t')=t\). By the definition of the
spectral divisor (see \S~\ref{subsec:spectral-divisor}), we have that \(f_{[1]}(\bC)\subset D_{\omega_t}\). The Zariski closure of
\(f_{[1]}(\bC)\) is irreducible and dominates \(X_t\). Thus, by
Lemma~\ref{lem:horizontal-spectral-divisor-family}, it is one of the
components \(Y_{j,t'}^\circ\). Since \(h_{j,t'}:S_{j,t'}\to Y_{j,t'}^\circ\) is birational, the composition \(h_{j,t'}^{-1}\circ f_{[1]}\) defines a meromorphic map which extends to a holomorphic map \(\widetilde f:\bC\to S_{j,t'}\). Its image is Zariski dense in
\(S_{j,t'}\), and in particular is not contained in the set \(W_{j,t'}\) introduced in \S~\ref{subsec:invariant-curves}. Then \(\widetilde f\) is tangent to the foliation \(\aF_{j,t'}\) (see Lemma~\ref{lem:invariant-lift} and its proof). In orther words, it gives a parabolic leaf of a foliation on the surface \(S_{j,t'}\) which is of general type by Lemma~\ref{lem:resolved-surfaces-general-type}. McQuillan's theorem~\cite{McQuillan1998,McQuillan1999}
(cf.\ \cite[Th.~6.1]{Demailly-ElGoul2000}) therefore implies that
\(\widetilde f\) is algebraically degenerate, a contradiction.
Hence \(f\) is algebraically degenerate.

Thus \(f(\bC)\) is contained in an integral curve \(C\subset X_t\) which has geometric genus at most one. This contradicts Theorem~\ref{mainthm:main-algebraic-degeneracy}. Hence every entire curve in \(X_t\) is constant, and Brody's lemma~\cite{Brody} implies that \(X_t\) is Kobayashi hyperbolic.
\end{proof}

\begin{proof}[Proof of Theorems~\ref{mainthm:kobayashi-hypersurface} and~\ref{mainthm:no-rational-curves}]
Let \(d\geq 18\). Corollary~\ref{cor:degree-ge18-relative-symmetric-differential} provides, over a nonempty Zariski-open subset of
\(\mathcal H_d^{\mathrm{sm}}\), a relative twisted symmetric differential
satisfying the differential hypothesis of Theorem~\ref{mainthm:main-algebraic-degeneracy}. By Clemens--Xu's theorem~\cite{Clemens1986,Xu1994}, a very general degree-$d$ surface contains no rational or elliptic curves. Thus all the hypotheses of Theorem~\ref{mainthm:main-algebraic-degeneracy} hold, and we obtain Theorem~\ref{mainthm:no-rational-curves}. Applying Theorem~\ref{mainthm:hypersurface-hyperbolicity} to the same family, we also obtain Theorem~\ref{mainthm:kobayashi-hypersurface}.
\end{proof}

\section{Complements of two plane curves}
\label{sec:curve-complements}

In this section we fix integers $1\leq d_1\leq d_2$ such that
\begin{equation}\label{eq:d1d2-condition}
d_1,d_2\geq 3,
\quad\text{or}\quad
d_1=2,\ d_2\geq 5,
\quad\text{or}\quad
d_1=1,\ d_2\geq 8.
\end{equation}
Let $\cP_{d_1,d_2}^{\mathrm{snc}}$ be the parameter space defined in
\eqref{eq:plane-pair-parameter-space}, parametrizing simple normal crossing (snc) divisors of $\bP^2$ with two components of degrees $d_1,d_2$. We denote the universal curves over $\cP_{d_1,d_2}^{\mathrm{snc}}$ by $\cC_1$ and $\cC_2$, and define $\cD:=\cC_1\cup\cC_2\subset\bP^2\times\cP_{d_1,d_2}^{\mathrm{snc}}$. For a parameter $a\in \cP_{d_1,d_2}^{\mathrm{snc}}$, we denote the corresponding snc divisor by $D_a=C_{1,a}+C_{2,a}$.  

\subsection{The logarithmic direction locus}\label{subsec:log-locus}

Let $D\subset\bP^2$ be a simple normal crossing divisor, let
$\Gamma\not\subset D$ be an irreducible curve, and let 
$\nu:\widetilde\Gamma\to\Gamma\hookrightarrow \bP^2$ be its normalization. Denote by $B:=\operatorname{Supp}\nu^*D$ the reduced divisor defined by the pullback. The logarithmic differential
$d\nu:T_{\widetilde\Gamma}(-\log B)\to
\nu^*T_{\bP^2}(-\log D)$ is nonzero on a dense Zariski-open subset of $\widetilde\Gamma$. It induces a morphism
\begin{equation}\label{eq:log-tangent-lift}
\nu_{[1]}:
\widetilde\Gamma\to
\bP\bigl(T_{\bP^2}(-\log D)\bigr),
\quad
x\mapsto
\left[
d\nu_x\bigl(T_{\widetilde\Gamma,x}(-\log B)\bigr)
\right]
\end{equation}
that we call \emph{the logarithmic tangent lift of $\Gamma$ or of $\nu$}.

\begin{thm}[Hou--Wang--Xie{\cite[Th.~1.2]{HouWangXie2026}}]\label{thm:plane-pair-direction-locus}
There are a nonempty Zariski-open subset
$\cU\subset\cP_{d_1,d_2}^{\mathrm{snc}}$, and a closed subscheme
$\cS\subset\bP(T_{\bP^2\times\cU/\cU}(-\log\cD))$ which is projective
over $\cU$ and whose fibers $S_a$ all have dimension $\leq 2$ such that the following holds. If $\Gamma\not\subset D_a$ is a rational curve
with normalization $\nu:\bP^1\to\Gamma$ such that $\#\operatorname{Supp}\nu^*D_a\leq 2$, then its logarithmic tangent lift
is contained in $S_a$.
\end{thm}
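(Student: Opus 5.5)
The plan is to adapt, in the logarithmic setting, the construction that gives Theorem~\ref{prop:jet-direction-locus} for hypersurfaces. The statement only concerns the first level of the logarithmic Demailly--Semple tower. It should follow once, for general $a$, we have two \emph{independent} negatively twisted invariant logarithmic $2$-jet differentials on $(\bP^2,D_a)$, and these vary algebraically with $a$.

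\textbf{Step 1: the relative tower.} Let $\cX_0:=\bP^2\times\cU$ and $V_0:=T_{\cX_0/\cU}(-\log\cD)$. Set $\cX_1:=\bP(V_0)$, let $V_1\subset T_{\cX_1/\cU}(-\log \pi_{1,0}^{-1}\cD)$ be the logarithmic contact distribution, and set $\cX_2:=\bP(V_1)$.

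\textbf{Step 2: existence of log jet differentials.} For a fixed snc divisor $D_a$ under the degree conditions \eqref{eq:d1d2-condition}, show that
\[
\coh^0\bigl(X_{a,2},\aO_{X_{a,2}}(m)\otimes\pi_{2,0}^*\aO_{\bP^2}(-1)\bigr)\neq 0 \qquad\text{for } m\gg 0.
\]
This uses Demailly's holomorphic Morse inequalities on $X_{a,2}$. The log Chern numbers $\bar c_1^2=(d_1+d_2-3)^2$ and $\bar c_2=3-2(d_1+d_2)+d_1d_2+\ldots$ enter through the Green--Griffiths-type estimate $13\bar c_1^2-9\bar c_2>0$. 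I expect that to be exactly the arithmetic producing the three degree ranges. A vanishing of the relevant $h^2$ (a Bogomolov-type argument) is also needed.

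\textbf{Step 3: the fundamental vanishing theorem.} Let $\nu:\bP^1\to\Gamma$ with $\#\operatorname{Supp}\nu^*D_a\leq 2$. Then $\nu$ restricts to a map $\bP^1\setminus B\to\bP^2\setminus D_a$, where $\bP^1\setminus B$ is one of $\bP^1$, $\bC$, $\bC^*$. Composing with $\exp$ when needed gives a nonconstant entire curve $f:\bC\to\bP^2\setminus D_a$. The logarithmic fundamental vanishing theorem (Siu--Yeung, Dethloff--Lu) gives $P(f_{[2]})\equiv0$ for every negatively twisted invariant log jet differential $P$. By construction $f_{[1]}$ has the same image as the logarithmic tangent lift $\nu_{[1]}$ of \eqref{eq:log-tangent-lift}.

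\textbf{Step 4: a second, independent differential (main obstacle).} One differential $P_1$ has zero divisor $Z_1\subset X_{a,2}$ of dimension $3$. This only confines $f_{[2]}$, and a threefold in $X_{a,2}$ can project onto all of $X_{a,1}$. So we need a second differential $P_2$ not sharing any component with $Z_1$ that dominates $X_{a,1}$.

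I would argue as Demailly--El Goul, P\u{a}un and Hou--Huynh--Merker--Xie do. Write $Z_1$ in components. On any component $W$ dominating $X_{a,1}$, run Morse inequalities again for $\aO(m)\otimes\aO(-1)$ restricted to $W$ (roughly, the class $\aO_{X_2}(1)$ twisted suitably is big on $W$). This produces a section on $W$ that extends, or alternatively a differential vanishing on $f_{[2]}$ but not identically on $W$. I expect this positivity computation to be the hard part and the main source of the degree constraints.

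\textbf{Step 5: conclusion and spreading out.} With $P_1,P_2$ in hand, $f_{[2]}$ lies in $Z(P_1)\cap Z(P_2)$, which has dimension $\leq 2$. Let $S_a$ be the image of this intersection in $X_{a,1}$; it is closed of dimension $\leq 2$ and contains $f_{[1]}(\bC)$, hence the Zariski closure $\nu_{[1]}(\bP^1)$.

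To pass to families, perform the construction at the generic point of $\cP_{d_1,d_2}^{\mathrm{snc}}$ and spread the sections out over an open set. Define $\cS$ as the image in $\cX_1$ of the common zero locus. Then, as in Lemma~\ref{lem:base-locus-semicontinuity}, upper semicontinuity of fiber dimension \cite[Cor.~13.1.5]{EGAIV} shrinks $\cU$ so that every fiber has dimension $\leq 2$.
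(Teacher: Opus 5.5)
This statement is not proved in the paper. It is quoted from Hou--Wang--Xie and used as a black box, just as Theorem~\ref{prop:jet-direction-locus} is quoted for hypersurfaces, so there is no in-paper argument to compare with and your text has to stand as a proof on its own. The reductions are fine: replacing $\Gamma$ by the entire curve $\nu|_{\bC}$ or $\nu\circ\exp$ in $\bP^2\setminus D_a$, applying the logarithmic fundamental vanishing theorem, and recovering $\nu_{[1]}(\bP^1)$ as the closure of $f_{[1]}(\bC)$. However, Steps~2 and~4 carry the entire content of the theorem, and you only state them as expectations. As written, this is a research plan rather than a proof.

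\textbf{Step 4 is the decisive gap.} What is actually needed is this: for every irreducible component $W$ of $Z(P_1)$ dominating $X_{a,1}$, a nonzero section $\sigma_W$ of $\bigl(\aO_{X_{a,2}}(m')\otimes\pi_{2,0}^*\aO_{\bP^2}(-1)\bigr)|_W$.
\begin{itemize}
\item The fundamental vanishing theorem applied \emph{on} $W$ is legitimate, since $f_{[2]}(\bC)\subset W$. It then puts $f_{[2]}(\bC)$ inside the surface $Z(\sigma_W)$.
\item No extension of $\sigma_W$ to $X_{a,2}$ is needed, and there is no reason to expect one. Accordingly, $S_a$ in Step~5 should be the union of the sets $\pi_{2,1}(Z(\sigma_W))$ and the images of the non-dominating components, not the image of $Z(P_1)\cap Z(P_2)$ for a global $P_2$.
\end{itemize}
Constructing $\sigma_W$ is where the work lies, and none of it appears in your proposal:
\begin{itemize}
\item express $\aO(W)$ in $\operatorname{Pic}(X_{a,2})\simeq\mathbb Z^3$, which is generated by $\aO_{X_{a,2}}(1)$, $\pi_{2,1}^*\aO_{X_{a,1}}(1)$ and $\pi_{2,0}^*\aO_{\bP^2}(1)$ (automatic here since $\operatorname{Pic}(\bP^2)=\mathbb Z$, with no Noether--Lefschetz-type genericity);
\item control its coefficients in terms of the weight and twist of $P_1$;
\item prove bigness of the restricted bundle on the threefold $W$ by an explicit Morse-inequality computation with a nef decomposition.
\end{itemize}

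\textbf{Step 2 is likewise only asserted,} and your prediction that it accounts for the three degree ranges is not accurate. With $\bar c_1^2=(d_1+d_2-3)^2$ and $\bar c_2=d_1^2+d_2^2+d_1d_2-3(d_1+d_2)+3$ (the linear term is $-3(d_1+d_2)$, not $-2(d_1+d_2)$), one finds
\[
13\bar c_1^2-9\bar c_2=4d_1^2+4d_2^2+17d_1d_2-51(d_1+d_2)+90 .
\]
This reproduces the ranges with $d_1\geq 2$. But it equals $1>0$ at $(d_1,d_2)=(1,7)$, which \eqref{eq:d1d2-condition} excludes. So you have not located where the hypotheses are actually used.

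\textbf{Step 5 also needs more care to be uniform over $\cU$.} The components of the relative zero divisor of $P_1$ must have geometrically integral fibers, and the restricted sections must stay nonzero on them. This can be arranged by an \'etale base change and shrinking, as in Lemmas~\ref{lem:etale-base-change-irreducible-components} and~\ref{lem:horizontal-spectral-divisor-family}. Otherwise a section nonzero on $W_a$ could vanish identically on the component of $W_a$ that carries $f_{[2]}$.
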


From now on, we will use $\cU$ and $\cS$ as in Theorem~\ref{thm:plane-pair-direction-locus}. Note that we will replace $\cU$ by a smaller nonempty Zariski-open subset whenever necessary.

\subsection{Cyclic cover}\label{subsec:cyclic-cover}
In this subsection we will carry out the classical cyclic cover (see \cite[\S~3]{EsneaultViehweg} and \cite{Pardini}) construction in family. 
Up to replacing $\cU$ by a nonempty affine open subset, we can and will assume that universal curves are defined by sections
\[
F_i\in\coh^0\bigl(\bP^2\times\cU,
\operatorname{pr}_1^*\aO_{\bP^2}(d_i)\bigr), \quad i=1,2.
\]
We let $\aH_0:=\operatorname{pr}_1^*\aO_{\bP^2}(1)$ be the pullback line bundle on $\bP^2\times \cU$. For $i=1,2$, consider the locally free $\aO_{\bP^2\times\cU}$-module
\[
 \mathcal A_i:=\bigoplus_{j=0}^{d_i-1}\aH_0^{-j}.
\]
We define a structure of commutative algebra on $\mathcal A_i$ as follows. Let $s$ and $t$ be local sections of $\aH_0^{-j}$ and $\aH_0^{-k}$, respectively. Since $0\leq j,k<d_i$, one has $j+k<2d_i$. Their product is defined by
\[
 s\cdot t=
 \begin{cases}
 s\otimes t\in\aH_0^{-(j+k)},
   & j+k<d_i,\\
 F_i\cdot (s\otimes t)
   \in\aH_0^{-(j+k-d_i)},
   & j+k\geq d_i.
 \end{cases}
\]

Define
\begin{equation}\label{eq:plane-pair-cover-algebra}
 q:\cY:=\operatorname{Spec}_{\bP^2\times\cU}
 \bigl(\mathcal A_1\otimes_{\aO_{\bP^2\times\cU}}\mathcal A_2\bigr)
 \longrightarrow\bP^2\times\cU.
\end{equation}
where the tensor product is taken in the category of $\aO_{\bP^2\times\cU}$-algebras. Its underlying module is
locally free of rank $d_1d_2$. By construction we have an isomorphism of $\aO_{\bP^2\times\cU}$-modules:
\begin{equation}\label{eq:plane-pair-cover-direct-image}
 q_*\aO_{\cY}\simeq
 \bigoplus_{\substack{0\leq j<d_1\\0\leq k<d_2}}
 \aH_0^{-(j+k)}.
\end{equation}

The morphism $q$ can be described locally as follows. Let $W\subset\bP^2\times\cU$ be an open subset on which $\aH_0$ is
generated by a nowhere-vanishing section $\alpha$. For $i=1,2$, over $W$, we have $F_i=f_i\alpha^{d_i}$ with $f_i\in\coh^0(W,\aO_W)$. We have an
isomorphism of $\aO_W$-algebras
\[
 \mathcal A_i\vert_W
 \simeq
 \frac{\aO_W[T_i]}{(T_i^{d_i}-f_i)},
\]
so
\begin{equation}\label{eq:qW-spec-ring}
q^{-1}(W) \simeq \operatorname{Spec}_W 
\frac{\aO_W[T_1,T_2]}{(T_1^{d_1}-f_1,T_2^{d_2}-f_2)}.
\end{equation}
One can verify that the local sections $T_iq^*\alpha$ glue to a global section $\tau_i\in\coh^0(\cY,q^*\aH_0)$ such that
\begin{equation}\label{eq:plane-pair-root-sections}
 \tau_i^{d_i}=q^*F_i.
\end{equation}

Let $\cR_i\subset\cY$ be the zero scheme of $\tau_i$, and define
$\cR:=\cR_1+\cR_2$. Let $p:\cY\to\cU$ be the projection morphism. For $a\in\cU$, let $q_a:Y_a\to\bP^2$ be the fiber morphism of $q$, and denote the fibers of $\cR_i$ and $\cR$ by $R_{i,a}$ and $R_a$, respectively.

\begin{prop}\label{prop:plane-pair-cyclic-cover}
The morphism $p$ is smooth and projective of relative dimension two, and
has smooth projective surfaces as fibers. For $i=1,2$, the closed subscheme
$\cR_i\subset\cY$ is an effective Cartier divisor, and
$\cR_i\to\cU$ is smooth of relative dimension one, while the scheme-theoretic intersection $\cR_1\cap\cR_2\to\cU$ is smooth of relative dimension zero, i.e.\ $\cR=\cR_1+\cR_2$ has simple normal crossings relative to $\cU$. For $i=1,2$, we also have
\begin{equation}\label{eq:plane-pair-branch-pullback}
 \cR_i=(q^{*}\cC_i)_{\mathrm{red}},\quad
 q^*\cC_i=d_i\cR_i,\quad
 \aO_{\cY}(\cR_i)\simeq q^*\aH_0.
\end{equation}
The morphism $q$ is \emph{\'etale} over $(\bP^2\times\cU)\setminus\cD$, and we have an isomorphism
\begin{equation}\label{eq:plane-pair-log-etale}
 q^*\Omega^1_{\bP^2\times\cU/\cU}(\log\cD)
 \simeq
 \Omega^1_{\cY/\cU}(\log\cR).
\end{equation}
The relative canonical line bundle
$\omega_{\cY/\cU}:=\det\Omega^1_{\cY/\cU}$ satisfies
\begin{equation}\label{eq:plane-pair-canonical-cover}
 \omega_{\cY/\cU}
 \simeq
 q^*\operatorname{pr}_1^*\aO_{\bP^2}(d_1+d_2-5).
\end{equation}
Consequently, every fiber $Y_a$ has ample canonical line bundle and is of
general type.
\end{prop}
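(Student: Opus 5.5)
Everything here is local on $\bP^2\times\cU$, so I will work in the explicit model \eqref{eq:qW-spec-ring}. Over an open $W$ where $\aH_0$ is trivialized, $q^{-1}(W)$ is cut out in $\bA^2_W$ by $T_1^{d_1}-f_1$ and $T_2^{d_2}-f_2$. Since $q$ is finite and locally free of rank $d_1d_2$ by \eqref{eq:plane-pair-cover-direct-image}, $p$ is projective and flat. To get smoothness of $p$ I will use the Jacobian criterion fiberwise. Fix $a\in\cU$ and a point of $Y_a$ over $x\in\bP^2$, and consider the Jacobian of $(T_1^{d_1}-f_1,T_2^{d_2}-f_2)$ with respect to $(T_1,T_2,x,y)$:
\[
\begin{pmatrix} d_1T_1^{d_1-1} & 0 & -\partial f_1\\ 0 & d_2T_2^{d_2-1} & -\partial f_2\end{pmatrix}.
\]
There are three cases.
\begin{itemize}
\item If $x\notin D_a$, both $T_i\neq0$ and the first $2\times2$ block is invertible. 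This also shows that $q$ is étale there.
\item If $x\in C_{i,a}$ only, then $T_i=0$ and $T_{i'}\neq0$. Smoothness of $C_{i,a}$ gives $\partial f_i\neq0$, so the rank is $2$.
\item If $x\in C_{1,a}\cap C_{2,a}$, then $T_1=T_2=0$. Transversality says $\partial f_1,\partial f_2$ are independent, so the rank is again $2$.
\end{itemize}
Hence every fiber $Y_a$ is smooth of dimension $2$, and $p$ is smooth.

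Fiberwise connectedness then follows because $h^0(\aO_{Y_a})=1$ by \eqref{eq:plane-pair-cover-direct-image}: the only summand with sections is the one with $j=k=0$. So the fibers are smooth projective surfaces. (I will not need irreducibility beyond connectedness plus smoothness.)

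For the divisors: $\cR_i=\{T_i=0\}$ locally. By the same Jacobian computation, in the coordinates $(T_1,T_2)$ near a point over $D_a$, $\{T_1=0\}$ and $\{T_2=0\}$ are smooth over $\cU$ and meet transversally in relative dimension $0$, where $T_1,T_2$ form part of a relative coordinate system. Concretely, near $C_{1,a}\cap C_{2,a}$ the functions $f_1,f_2$ are local coordinates on $\bP^2$, and $Y_a$ is locally $\{(T_1,T_2)\}$ with $f_i=T_i^{d_i}$. This gives relative snc. Since $\tau_i\in\coh^0(\cY,q^*\aH_0)$ cuts out $\cR_i$, we get $\aO(\cR_i)\simeq q^*\aH_0$. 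Moreover $q^*F_i=\tau_i^{d_i}$ by \eqref{eq:plane-pair-root-sections}, which yields $q^*\cC_i=d_i\cR_i$ and $\cR_i=(q^*\cC_i)_{\mathrm{red}}$.

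For the logarithmic isomorphism \eqref{eq:plane-pair-log-etale}, I will compare local frames. Near a point of $\cR_1\cap\cR_2$ we have $dT_i/T_i=d_i^{-1}q^*(df_i/f_i)$, so the pulled-back frame $q^*(df_1/f_1),q^*(df_2/f_2)$ is a frame of $\Omega^1_{\cY/\cU}(\log\cR)$. Near points of a single $\cR_i$ the frame is $q^*(df_i/f_i)$ together with a coordinate along the curve, and off $\cR$ the isomorphism is just étaleness. Thus the natural injective map $q^*\Omega^1(\log\cD)\to\Omega^1(\log\cR)$ is an isomorphism.

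Taking determinants gives
\[
\omega_{\cY/\cU}(\cR)\simeq q^*\bigl(\omega_{\bP^2}(\cD)\bigr)=q^*\aO(d_1+d_2-3),
\]
and subtracting $\cR\sim q^*\aH_0^{2}$ yields $q^*\aO(d_1+d_2-5)$. Under \eqref{eq:d1d2-condition} we have $d_1+d_2\geq6$. Since $q$ is finite, the pullback of an ample bundle is ample, so $K_{Y_a}$ is ample and $Y_a$ is of general type.

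The main obstacle is the singular-point analysis at $C_{1,a}\cap C_{2,a}$, and I expect it to be routine once local coordinates $(f_1,f_2)$ are chosen there.
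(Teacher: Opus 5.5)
Your proposal is correct and follows essentially the same route as the paper. Both arguments use the same steps:
\begin{enumerate}
\item the local model $T_i^{d_i}=f_i$;
\item a case-by-case Jacobian check (off $D_a$, on a single component, at the crossing points) for smoothness of $p$ and of $\cR_i\to\cU$;
\item $h^0(\aO_{Y_a})=1$, read off from the direct-image decomposition;
\item the relative coordinates $(T_1,T_2)$ over $\cC_1\cap\cC_2$ for the relative snc property;
\item comparison of logarithmic frames for \eqref{eq:plane-pair-log-etale}, followed by taking determinants.
\end{enumerate}
The only cosmetic difference is that you make flatness of $p$ explicit through the local freeness of $q_*\aO_{\cY}$, while the paper phrases the smoothness check directly as linear independence of relative differentials.
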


\begin{proof}
Let $y\in\cY$ and let $x=q(y)$. Let $W$ be an open neighborhood of $x$ on which $\aH_0$ is generated by a nowhere-vanishing section $\alpha$, so 
$F_i=f_i\alpha^{d_i}$ for some function $f_i$. We already explained that $q^{-1}(W)$ is the closed subscheme of $W\times\mathbb A^2$ defined by 
$T_1^{d_1}-f_1$ and $T_2^{d_2}-f_2$. 

Suppose that the differential satisfies
\begin{equation}\label{eq:differential-cY}
\lambda_1d(T_1^{d_1}-f_1)(y) +\lambda_2d(T_2^{d_2}-f_2)(y)=0.
\end{equation}
If $f_i(x)\neq 0$, then $T_i(y)\neq 0$, so \eqref{eq:differential-cY} shows that $\lambda_i=0$. The same conclusion holds without hypothesis on $f_i(x)$ when $d_i=1$. Assume now that $f_i(x)=0$, $d_i>1$, and $T_i(y)=0$, so \eqref{eq:differential-cY} becomes a linear equation of $df_i(x)$. They are linearly independent because the two universal curves $\cC_1$ and $\cC_2$ meet transversely in every fiber. Hence $p$ is smooth of relative dimension two. For every $a\in\cU$, restricting \eqref{eq:plane-pair-cover-direct-image} to $\bP^2\times\{a\}$ we get \(\coh^0(Y_a,\aO_{Y_a})=\bC\) because $\coh^0(\bP^2,\aO_{\bP^2}(-m))=0$ for every $m>0$. Thus $Y_a$ is an integral smooth surface. 

On the open subset $q^{-1}(W)$, the sections $q^*F_i$ and $\tau_i$ are represented respectively by $T_i^{d_i}$ and $T_i$. By construction 
$\cR_i=Z(\tau_i)$ is an effective Cartier divisor on $\cY$. We claim that $\cR_i\to\cU$ is smooth. Indeed, 
\[
 \cR_i\cap q^{-1}(W)
 \simeq
 \operatorname{Spec}_W
 \frac{\aO_W[T_j]}
 {(f_i,T_j^{d_j}-f_j)},
\]
while by construction $d(T_j^{d_j}-f_j)$ and $df_i$ are linearly independent again because $\cC_1$ and $\cC_2$ are snc.

On $q^{-1}(W)$, the intersection $\cR_1\cap\cR_2$ is defined by $T_1=T_2=0$. As $T_i^{d_i}=f_i$, we get that $f_1=f_2=0$, and $q$ induces an isomorphism
\[
 q\vert_{\cR_1\cap\cR_2}: \cR_1\cap\cR_2\longrightarrow\cC_1\cap\cC_2.
\]
As $\cC_1+\cC_2$ is snc, $\cR_1\cap\cR_2$ is smooth of relative dimension zero.

Let $y\in\cR_1\cap\cR_2$ and $x=q(y)$. At $x$, the differentials
$df_1,df_2$ form a basis of $\Omega^1_{\bP^2\times\cU/\cU}$. Consequently, the morphism $(f_1,f_2):W\to\bA^2_\cU$ is étale on a neighborhood of $x$.
Locally $\cY$ is obtained from this morphism by the base change \((T_1,T_2)\mapsto(T_1^{d_1},T_2^{d_2})\). So $(T_1,T_2):q^{-1}(W)\to\bA^2_\cU$ is étale on a neighborhood of $y$ while $\cR_i$ is the inverse image of the divisor $T_i=0$. Together with the smoothness of
$\cR_i\to\cU$ proved above, this shows that $\cR=\cR_1+\cR_2$ is snc relative to $\cU$. The formulas in \eqref{eq:plane-pair-branch-pullback} are immediate by construction.

Over $(\bP^2\times\cU)\setminus\cD$, the functions $T_1$ and $T_2$ are
invertible. Since $d_iT_i^{d_i-1}$ is then invertible, we deduce from \eqref{eq:qW-spec-ring} that $q$ is étale over this open subset. 

We next prove \eqref{eq:plane-pair-log-etale}. The morphism $q$ induces
a morphism
\[
 \varphi:
 q^*\Omega^1_{\bP^2\times\cU/\cU}(\log\cD)
 \longrightarrow
 \Omega^1_{\cY/\cU}(\log\cR).
\] 
Let $y\in\cY$ and $x=q(y)$. Suppose first that $x\in\cC_1\cap\cC_2$. The transversality of $\cC_1$ and $\cC_2$ implies that $f_1,f_2$ are local coordinates along the fibers of $\bP^2\times\cU\to\cU$. Similarly, the equations $T_i^{d_i}=f_i$ show that $T_1,T_2$ are local coordinates along the fibers of $\cY\to\cU$. Hence the logarithmic cotangent sheaves have
respectively local bases
\[
 \frac{df_1}{f_1},\frac{df_2}{f_2}
 \quad\text{and}\quad
 \frac{dT_1}{T_1},\frac{dT_2}{T_2}.
\]
With respect to these bases, the morphism $\varphi$ multiplies the two
generators by $d_1$ and $d_2$, respectively. It is therefore an isomorphism near $y$. 

Suppose now that $x\in\cC_i\setminus\cC_j$, where
$\{i,j\}=\{1,2\}$. Pick a local function $g$ such that $f_i,g$ are
local coordinates along the fibers of $\bP^2\times\cU\to\cU$. Since $f_j$ and $T_j$ are invertible near $x$ and $y$, respectively, the equation $T_j^{d_j}=f_j$ defines an étale morphism there. The logarithmic cotangent sheaves have respective local bases
\[
 \frac{df_i}{f_i},dg
 \quad\text{and}\quad
 \frac{dT_i}{T_i},q^*dg.
\]
With respect to these bases, $\varphi$ multiplies the first generator by
$d_i$ and maps the second one to $q^*dg$. Hence $\varphi$ is an
isomorphism near $y$. Outside $\cR$, $\varphi$ is an
isomorphism because $q$ is étale. We have therefore proved \eqref{eq:plane-pair-log-etale}.

Taking determinants in \eqref{eq:plane-pair-log-etale} and using the snc condition, we obtain
\[
 \omega_{\cY/\cU}\otimes\aO_{\cY}(\cR)
 \simeq
 q^*\bigl(\omega_{\bP^2\times\cU/\cU}\otimes\aO_{\bP^2\times\cU}(\cD)\bigr).
\]
Since $\omega_{\bP^2\times\cU/\cU}\simeq\aH_0^{-3}$, $\aO_{\bP^2\times\cU}(\cD)\simeq\aH_0^{d_1+d_2}$, and $\aO_{\cY}(\cR)\simeq q^*\aH_0^2$, we obtain \eqref{eq:plane-pair-canonical-cover}. Note that \eqref{eq:d1d2-condition} implies that $d_1+d_2-5\geq 1$.
\end{proof}

The dual of \eqref{eq:plane-pair-log-etale} gives an isomorphism between $T_{\cY/\cU}(-\log\cR)$ and $q^*T_{\bP^2\times\cU/\cU}(-\log\cD)$, which induces an isomorphism
\begin{equation}\label{eq:projective-bundle-log}
 \bP\bigl(T_{\cY/\cU}(-\log\cR)\bigr)
 \simeq
 \cY\times_{\bP^2\times\cU}
 \bP\bigl(T_{\bP^2\times\cU/\cU}(-\log\cD)\bigr).
\end{equation}
Let
\[
 \widetilde q:
 \bP\bigl(T_{\cY/\cU}(-\log\cR)\bigr)
 \longrightarrow
 \bP\bigl(T_{\bP^2\times\cU/\cU}(-\log\cD)\bigr)
\]
be the second projection in \eqref{eq:projective-bundle-log}. 

Recall that $\cS$ is from Th.~\ref{thm:plane-pair-direction-locus}. Let $\cT:=\widetilde q^{-1}(\cS)$ which is a closed subvariety of $\bP(T_{\cY/\cU}(-\log\cR))$. We denote its fiber over $a\in\cU$ by $T_a$. 

Let $\aH:=q^*\aH_0$, and let $H_a:=\aH\vert_{Y_a}$. Since $q$ is finite,
$\aH$ is $p$-ample. If $C\subset Y_a$ is an integral curve
not contained in $R_a$ with normalization $\mu:\widetilde C\to C$, then the
logarithmic differential
\[
 d\mu: T_{\widetilde C}\bigl(-\log\operatorname{Supp}\mu^*R_a\bigr)
 \to \mu^*T_{Y_a}(-\log R_a)
\]
is nonzero. As in the case of \eqref{eq:log-tangent-lift}, it induces a logarithmic tangent lift morphism
\begin{equation}\label{eq:log-tangent-lift-Y}
\mu_{Y}:\widetilde C \to \bP(T_{Y_a}(-\log R_a)),\quad
x\mapsto \left[d\mu_x\left(T_{\widetilde C,x}
\bigl(-\log\operatorname{Supp}\mu^*R_a\bigr)\right)\right].
\end{equation}
We will call the image of $\mu_Y$ the logarithmic tangent lift of $C$.

\begin{lem}\label{lem:Y-lift}
Let $a\in\cU$, and let $\Gamma\not\subset D_a$ be a rational
curve with normalization $\nu:\widetilde\Gamma\to\Gamma$. Assume that $\#\operatorname{Supp}\nu^*D_a\leq 2$. Then there is a rational
curve $C\subset Y_a$, not contained in $R_a$, such that $q_a(C)=\Gamma$
and the logarithmic tangent lift of $C$ is contained in $T_a$. If $\delta$ is the degree of $q_a\vert_C:C\to\Gamma$, then \begin{equation}\label{eq:plane-pair-lift-degree}
 H_a\cdot C=\delta\deg\Gamma.
\end{equation}
\end{lem}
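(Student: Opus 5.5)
The plan is to lift the normalization $\nu:\bP^1\to\Gamma$ through the cyclic cover $q_a$, using that $\nu^*D_a$ is supported on at most two points of $\bP^1$. Write $\nu^*C_{i,a}$ for the pullback divisors. Each $\nu^*F_i$ is a section of $\aO_{\bP^1}(d_i\deg\Gamma)$ vanishing only on $B:=\operatorname{Supp}\nu^*D_a$, which has at most two points. The first goal is to produce a finite morphism $\phi:\bP^1\to\bP^1$ such that $\phi^*\nu^*F_i$ admits a $d_i$-th root in $\coh^0(\bP^1,\phi^*\nu^*\aO(\deg\Gamma))$ for $i=1,2$.

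\textbf{Construction of the cover.} If $B=\emptyset$ (impossible here, since $\Gamma$ meets the ample divisor $D_a$) or $B$ is a single point, then $\bP^1\setminus B$ is simply connected, so any $d_i$-th root extends and we may take $\phi=\mathrm{id}$. If $B=\{0,\infty\}$, we choose coordinates so that $\nu^*F_i=c_i z^{k_i}w^{l_i}$ with $k_i+l_i=d_i\deg\Gamma$. We then take $\phi(z)=z^N$ with $N=d_1d_2$. The pulled-back section becomes $c_i z^{Nk_i}w^{Nl_i}$, and $d_i$ divides both $Nk_i$ and $Nl_i$, so it has a $d_i$-th root. In every case the pair of roots defines, via the local description \eqref{eq:qW-spec-ring} (equivalently, via the universal property of the relative $\operatorname{Spec}$ of $\mathcal A_1\otimes\mathcal A_2$), a morphism $\lambda:\bP^1\to Y_a$ with $q_a\circ\lambda=\nu\circ\phi$. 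We set $C:=\lambda(\bP^1)$. This curve is rational, satisfies $q_a(C)=\Gamma$, and is not contained in $R_a$ because $\Gamma\not\subset D_a$.

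\textbf{Logarithmic tangent lift.} Let $\mu:\widetilde C\to C$ be the normalization. The composition $\nu_C:=q_a\circ\mu:\widetilde C\to\Gamma$ factors through $\nu$ via a finite map $\psi:\widetilde C\to\bP^1$, and $\operatorname{Supp}\mu^*R_a=\psi^{-1}(B)$. By \eqref{eq:plane-pair-log-etale}, $d q_a$ identifies $T_{Y_a}(-\log R_a)$ with $q_a^*T_{\bP^2}(-\log D_a)$, and $\psi$ is logarithmically étale, i.e.\ a local isomorphism on log tangent lines, away from finitely many points. Hence $\widetilde q\circ\mu_Y=\nu_{[1]}\circ\psi$ holds on a dense open set, and therefore everywhere. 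Theorem~\ref{thm:plane-pair-direction-locus} gives $\nu_{[1]}(\bP^1)\subset S_a$, so $\mu_Y(\widetilde C)\subset\widetilde q^{-1}(S_a)=T_a$.

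\textbf{Degree formula.} By the projection formula, $H_a\cdot C=q_a^*\aO(1)\cdot C=\deg(q_a|_C)\,(\aO(1)\cdot\Gamma)=\delta\deg\Gamma$.

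\textbf{Main obstacle.} The hardest step is the tangent-lift compatibility at points of $B$. There one must check that the log differential of $\nu_C$ has the same image line as that of $\nu$. I would verify this in local coordinates: with $T_i^{d_i}=f_i$ and a branch of the form $s\mapsto s^e$, the logarithmic generator $s\,\partial_s$ maps to a multiple of $T_i\partial_{T_i}$ corresponding to $f_i\partial_{f_i}$ scaled by $d_i$, which is the same line. In any case, continuity of both lifts removes the need for a pointwise check.
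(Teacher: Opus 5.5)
Your proof is correct, but you construct $C$ differently from the paper.

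\textbf{The paper's construction.} It takes an irreducible component $Z$ of $Y_a\times_{\bP^2}\widetilde\Gamma$. The normalization $\widetilde Z\to\widetilde\Gamma\simeq\bP^1$ is étale outside the at most two points of $\operatorname{Supp}\nu^*D_a$. Riemann--Hurwitz then gives $2g(\widetilde Z)-2\leq-2\delta+2(\delta-1)=-2$, so $\widetilde Z\simeq\bP^1$.

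\textbf{Your construction.} You build the lift explicitly. You pass to the Kummer cover $z\mapsto z^{d_1d_2}$, so that the monomial equations $\nu^*F_i=c_iz^{k_i}w^{l_i}$ acquire $d_i$-th roots. You then use the universal property of $\operatorname{Spec}(\mathcal A_1\otimes\mathcal A_2)$: a morphism to $Y_a$ over $\nu\circ\phi$ is exactly a pair of sections $\sigma_i$ of $\phi^*\nu^*\aO(1)$ with $\sigma_i^{d_i}=\phi^*\nu^*F_i$.

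\textbf{Common steps.} The tangent-lift step and the projection-formula step are the same as in the paper. As you note yourself, agreement of $\widetilde q\circ\mu_Y$ and $\nu_{[1]}\circ\psi$ on a dense open set already suffices, so the local computation at the points of $B$ is not needed.

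\textbf{Comparison.} The paper's route is shorter and uses only that $q_a$ is étale over $\bP^2\setminus D_a$. It therefore applies to any such finite cover and shows that every component of $q_a^{-1}(\Gamma)$ is rational. Your route relies on the specific cyclic structure of the cover, but it gives an explicit parametrization $\lambda:\bP^1\to C$ and avoids Riemann--Hurwitz.

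\textbf{A minor point.} In the one-point case, it is cleaner not to appeal to simple connectivity. Instead say that $\nu^*F_i$ has divisor $d_i\deg\Gamma\cdot p$. Hence it equals $c\,s^{d_i}$ for a section $s$ of $\aO_{\bP^1}(\deg\Gamma)$ with divisor $\deg\Gamma\cdot p$.
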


\begin{proof}
Let $Z$ be an irreducible component of $Y_a\times_{\bP^2}\widetilde\Gamma$. It dominates $\widetilde\Gamma$. Let $\widetilde Z$ be the normalization of $Z$, and let
$\varphi:\widetilde Z\to\widetilde\Gamma$ be the induced finite morphism. Since $q_a$ is étale over $\bP^2\setminus D_a$, the morphism
$\varphi$ is étale outside $\operatorname{Supp}\nu^*D_a$.

The total ramification over each point of $\widetilde\Gamma\simeq\bP^1$ is at most $\delta-1$. By the Riemann--Hurwitz formula, we have
\[
 2g(\widetilde Z)-2
 \leq -2\delta+2(\delta-1)=-2.
\]
Hence $\widetilde Z\simeq\bP^1$.

Let $C\subset Y_a$ be the image of $Z$. Over the open subset of $\Gamma$
on which $\nu$ is an isomorphism, the projection $Z\to C$ is an
isomorphism onto its image. Thus $\widetilde Z\to C$ is the
normalization of $C$, and $C$ is rational. By construction $q_a(C)=\Gamma$ and $C\not\subset R_a$.

At a general point of $C$, the differential of $q_a$ maps
the tangent line of $C$ to the tangent line of $\Gamma$. Thus
$\widetilde q$ maps a dense open subset of the logarithmic tangent lift
of $C$ into the logarithmic tangent lift of $\Gamma$. The latter is
contained in $S_a$ by Theorem~\ref{thm:plane-pair-direction-locus}.
Since $T_a=\widetilde q^{-1}(S_a)$ is closed, the logarithmic tangent
lift of $C$ is contained in $T_a$. Finally \eqref{eq:plane-pair-lift-degree} follows from the projection formula because $(q_{a})_*[C]=\delta[\Gamma]$.
\end{proof}

\subsection{Symmetric differential and degree bound}\label{subsec:log-symmetric-differential-degree-bound}

\begin{prop}\label{prop:plane-pair-relative-symmetric-differential}
Up to replacing $\cU$ by a nonempty Zariski-open subset there are integers $N,b\geq 1$ with the following properties. There is a nonzero section
\begin{equation}\label{eq:plane-pair-symmetric-differential}
 \boldsymbol\omega\in \coh^0\!\left(\cY, \Sym^N\Omega^1_{\cY/\cU} \otimes(\aH)^{\otimes bN}
 \otimes\aO_{\cY}(N\cR)\right)
\end{equation}
whose restriction $\omega_a$ to $Y_a$ is nonzero for every
$a\in\cU$. If $C\subset Y_a$ is an integral curve not contained
in $R_a$ and its logarithmic tangent lift is contained in $T_a$, then
$C\subset Z(\omega_a)$ or $C$ is $\omega_a$-invariant.
\end{prop}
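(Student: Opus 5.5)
The plan is to mimic the proof of Corollary~\ref{cor:degree-ge18-relative-symmetric-differential}, replacing the ordinary projectivized tangent bundle by the logarithmic one and then converting logarithmic symmetric differentials into ordinary twisted ones via the inclusion $\Omega^1_{\cY/\cU}(\log\cR)\subset\Omega^1_{\cY/\cU}\otimes\aO_{\cY}(\cR)$.

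\emph{Step 1: a section vanishing on $\cT$.} Set $\cQ_{\log}:=\bP\bigl(T_{\cY/\cU}(-\log\cR)\bigr)$, with projection $\Pi_{\log}:\cQ_{\log}\to\cY$. Since $\widetilde q$ is finite (it is a base change of the finite morphism $q$), the fibers $T_a=\widetilde q^{-1}(S_a)$ have dimension at most $2$, while the fibers of $\cQ_{\log}$ have dimension $3$. Choose $b\geq1$ such that $\mathcal A:=\aO_{\cQ_{\log}}(1)\otimes\Pi_{\log}^*\aH^{\otimes b}$ is ample relative to $\cU$; this is possible because $\aH$ is $p$-ample. At the generic point $\eta$ of $\cU$, the ideal $\mathcal I_{\cT_\eta}$ is nontrivial. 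By Serre's theorem, $\mathcal I_{\cT_\eta}\otimes\mathcal A_\eta^{N}$ is globally generated for $N\gg0$, so it has a nonzero section. We spread this section out over a shrunk $\cU$ to obtain $\boldsymbol s\in\coh^0(\cQ_{\log},\mathcal I_{\cT}\otimes\mathcal A^N)\subset\coh^0(\cQ_{\log},\mathcal A^N)$. We then shrink $\cU$ further, using openness of the smooth map $\cQ_{\log}\to\cU$, so that $\boldsymbol s$ is nonzero on every fiber.

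\emph{Step 2: push down and untwist.} Since $(\Pi_{\log})_*\aO_{\cQ_{\log}}(N)=\Sym^N\Omega^1_{\cY/\cU}(\log\cR)$, the section $\boldsymbol s$ becomes a section of $\Sym^N\Omega^1_{\cY/\cU}(\log\cR)\otimes\aH^{\otimes bN}$. The relative snc structure of $\cR$ from Proposition~\ref{prop:plane-pair-cyclic-cover} gives local frames $dT_i/T_i$, and multiplying these by $T_i$ yields an injection $\Omega^1_{\cY/\cU}(\log\cR)\hookrightarrow\Omega^1_{\cY/\cU}\otimes\aO_{\cY}(\cR)$. Taking $\Sym^N$ produces $\boldsymbol\omega$ in the space \eqref{eq:plane-pair-symmetric-differential}. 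The map is injective on every fiber $Y_a$, since it is an isomorphism off the relative snc divisor $R_a$, so each $\omega_a$ is nonzero.

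\emph{Step 3: invariance.} Let $C\not\subset R_a$ be integral with logarithmic tangent lift $\mu_Y(\widetilde C)\subset T_a$, and suppose $C\not\subset Z(\omega_a)$. Take a general point $x\in C\setminus R_a$ at which $C$ is smooth. Near $x$ the logarithmic and ordinary tangent lines agree, and $\omega_a(x)$ evaluated on $T_{C,x}$ is, up to the invertible twist, the value of $s_a$ at the point $\mu_Y(x)\in T_a$. This value vanishes because $\boldsymbol s$ lies in $\mathcal I_{\cT}$. Hence $C$ is $\omega_a$-invariant, in the sense of \S\ref{subsec:invariant-curves}.

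\emph{Main obstacle.} The step needing the most care is making sure that the fiberwise restriction of $\boldsymbol s$ still vanishes on the scheme-theoretic fibers $T_a$ while remaining nonzero on each fiber. Vanishing is automatic, since $\mathcal I_{\cT}\cdot\aO_{\cQ_{\log,a}}\subset\mathcal I_{T_a}$. Nonvanishing comes from shrinking $\cU$, which is harmless because the statement allows it. Beyond that, the argument needs only the identification of $\Pi_{\log*}$ with logarithmic symmetric powers, and that identification is standard.
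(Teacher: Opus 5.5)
Your proposal is correct and follows the paper's proof essentially step for step: Serre's theorem applied to $\mathcal I_{\cT}\otimes(\aO(1)\otimes\pi^*\aH^{\otimes b})^{\otimes N}$, push-forward to logarithmic symmetric differentials, the inclusion $\Omega^1_{\cY/\cU}(\log\cR)\hookrightarrow\Omega^1_{\cY/\cU}\otimes\aO_{\cY}(\cR)$, and the pointwise invariance check at a general point of $C\setminus R_a$. Your observation that $\dim T_a\leq 2$ because $\widetilde q$ is finite usefully makes explicit why $\mathcal I_{\cT}$ is nontrivial on the generic fiber, which the paper leaves implicit.
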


\begin{proof}
Denote $\mathscr P:=\bP(T_{\cY/\cU}(-\log\cR))$. Let $\pi:\mathscr P\to\cY$ be the projection, and let $\rho:=p\circ\pi$. Pick $b\geq 1$ such that $\mathcal B:=\aO_{\mathscr P}(1)\otimes\pi^*\aH^{\otimes b}$ is ample relative to $\rho$. Let $\mathcal I_{\cT}$ be the ideal sheaf of $\cT$
in $\mathscr P$. By Serre's theorem, for $N\gg 0$ and up to shrinking $\cU$, we can find a section $\boldsymbol\beta$ of $\mathcal I_{\cT}\otimes\mathcal B^{\otimes N}$ whose restriction to $\mathscr P_a$ is nonzero for every $a\in\cU$.

We have
\(\pi_*\aO_{\mathscr P}(N)\simeq\Sym^N\Omega^1_{\cY/\cU}(\log\cR)\). Thus projecting $\boldsymbol\beta$ onto $\cY$ we get a nonzero section
\[
 \boldsymbol\theta\in \coh^0
 \left(\cY, \Sym^N\Omega^1_{\cY/\cU}(\log\cR)
 \otimes(\aH)^{\otimes bN}\right).
\]
For $a\in\cU$ we denote the restriction of $\boldsymbol\theta$ to $Y_a$ by $\theta_a$. By construction $\theta_a(v^{\otimes N})=0$ for every direction $[v]\in T_a$.

Logarithmic one-forms have at most simple poles along $\cR$.
Hence there is a natural inclusion
\begin{equation}\label{eq:log-differential-Y-inclusion}
\Omega^1_{\cY/\cU}(\log\cR) \longrightarrow \Omega^1_{\cY/\cU}\otimes\aO_{\cY}(\cR)
\end{equation}
whose $N$-th symmetric power maps $\boldsymbol\theta$ to the desired section $\boldsymbol\omega$ in \eqref{eq:plane-pair-symmetric-differential}. The morphism \eqref{eq:log-differential-Y-inclusion} is an
isomorphism over $\cY\setminus\cR$, so $\omega_a$ is also nonzero for every
$a\in\cU$.

Let $C\subset Y_a$ be an integral curve not contained in $R_a$, and
assume that its logarithmic tangent lift is contained in $T_a$. At a
general smooth point $y\in C\setminus R_a$, the logarithmic tangent
sheaf coincides with the ordinary tangent sheaf. The line
$T_{C,y}\subset T_{Y_a,y}$ corresponds to a point of $T_a$, and
$\theta_a(y)$ vanishes on $\Sym^N T_{C,y}$. Since \eqref{eq:log-differential-Y-inclusion} is an isomorphism near $y$, the differential $\omega_a(y)$ also vanishes on $\Sym^N T_{C,y}$. If $C\not\subset Z(\omega_a)$, then the curve $C$ is $\omega_a$-invariant.
\end{proof}

\begin{prop}\label{prop:plane-pair-uniform-degree}
Up to replacing $\cU$ by a nonempty Zariski-open subset, there exists an integer $E\geq 1$ such that every rational curve $\Gamma\not\subset D_a$, where $a\in\cU$, satisfies $\deg\Gamma\leq E$ whenever its normalization
$\nu:\bP^1\to\Gamma$ satisfies $\#\operatorname{Supp}\nu^*D_a\leq 2$.
\end{prop}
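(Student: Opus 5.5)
The plan is to pass from the plane to the cyclic cover $p:\cY\to\cU$ and then invoke Theorem~\ref{thm:main-degree-bound} with $g=0$. By Proposition~\ref{prop:plane-pair-cyclic-cover}, after shrinking $\cU$ to a smooth (affine) open subset, $p$ is a smooth projective family of surfaces of general type. The line bundle $\aH=q^*\aH_0$ is $p$-ample, and Proposition~\ref{prop:plane-pair-relative-symmetric-differential} supplies a nonzero relative twisted symmetric differential
\[
\boldsymbol\omega\in\coh^0\bigl(\cY,\Sym^N\Omega^1_{\cY/\cU}\otimes\mathcal L\bigr),\qquad \mathcal L:=\aH^{\otimes bN}\otimes\aO_{\cY}(N\cR),
\]
whose restriction is nonzero on every fiber. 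This puts us exactly in the setting of Section~\ref{sec:family-hypersurfaces} and Theorem~\ref{thm:main-degree-bound}; the base $\cU$ is smooth and quasi-projective, being open in a product of projective spaces.

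First, apply Theorem~\ref{thm:main-degree-bound} with $g=0$. This gives an integer $\tau\geq1$ and a nonempty Zariski-open subset $\cU_0\subset\cU$ with the following property. For $a\in\cU_0$, every integral rational curve $C\subset Y_a$ that is either contained in $Z(\omega_a)$, or not contained in $Z(\omega_a)$ but $\omega_a$-invariant, satisfies $H_a\cdot C\leq\tau$. We replace $\cU$ by $\cU_0$.

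Next, let $\Gamma\not\subset D_a$ be a rational curve with $\#\operatorname{Supp}\nu^*D_a\leq2$. Lemma~\ref{lem:Y-lift} produces a rational curve $C\subset Y_a$ with $C\not\subset R_a$ and $q_a(C)=\Gamma$, whose logarithmic tangent lift lies in $T_a$. By Proposition~\ref{prop:plane-pair-relative-symmetric-differential}, either $C\subset Z(\omega_a)$ or $C$ is $\omega_a$-invariant. Hence $H_a\cdot C\leq\tau$. Since \eqref{eq:plane-pair-lift-degree} gives $H_a\cdot C=\delta\deg\Gamma$ with $\delta\geq1$, we conclude $\deg\Gamma\leq\tau$, so $E:=\tau$ works.

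The main obstacle is checking that the hypotheses of Theorem~\ref{thm:main-degree-bound} hold uniformly over the whole base. That theorem needs smooth projective fibers of general type, which Proposition~\ref{prop:plane-pair-cyclic-cover} provides over every $a\in\cU$. It also needs nonvanishing of $\omega_a$ on each fiber, which the shrinking inside Proposition~\ref{prop:plane-pair-relative-symmetric-differential} provides. A further subtlety is that the relevant curves must be integral of geometric genus exactly $0$. This holds because $\widetilde Z\simeq\bP^1$ is the normalization of $C$. Once these points are checked, no further argument is needed beyond composing the cited results.
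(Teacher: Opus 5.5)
Your proposal is correct and matches the paper's proof essentially step for step. You apply Theorem~\ref{thm:main-degree-bound} with $g=0$ to the section from Proposition~\ref{prop:plane-pair-relative-symmetric-differential} on the cyclic cover, lift $\Gamma$ to a rational curve $C\subset Y_a$ via Lemma~\ref{lem:Y-lift}, and conclude from $H_a\cdot C=\delta\deg\Gamma$ with $\delta\geq 1$.
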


\begin{proof}
Recall that the morphism $\cY\to\cU$ is a smooth projective family of smooth surfaces of general type, and $\aH$ is relatively ample; see Proposition~\ref{prop:plane-pair-cyclic-cover}. We apply
Theorem~\ref{thm:main-degree-bound} to the section \eqref{eq:plane-pair-symmetric-differential}. Up to shrinking $\cU$ we obtain an integer $E\geq 1$ such that $H_a\cdot C\leq E$ for every rational curve $C\subset Y_a$ which is contained in $Z(\omega_a)$ or is $\omega_a$-invariant.

Let $\Gamma$ satisfy the assumptions of the proposition. Let $C\subset Y_a$ be a curve satisfying the conclusions of Lemma~\ref{lem:Y-lift}.
By Proposition~\ref{prop:plane-pair-relative-symmetric-differential},
either $C\subset Z(\omega_a)$ or $C$ is $\omega_a$-invariant. Thus
$H_a\cdot C\leq E$. Then the equality \eqref{eq:plane-pair-lift-degree} from Lemma~\ref{lem:Y-lift} implies $\deg\Gamma\leq E$.
\end{proof}

\subsection{The Zariski-open conclusion}

\begin{lem}\label{lem:plane-pair-bounded-incidence}
For every integer $E\geq1$, the locus of parameters $a\in\cU$ for which
there is a rational curve $\Gamma\not\subset D_a$ of degree at most $E$
whose normalization $\nu:\bP^1\to\Gamma$ satisfies
$\#\operatorname{Supp}\nu^*D_a\leq2$ is constructible.
\end{lem}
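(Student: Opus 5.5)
Rather than working with Hilbert schemes of curves in $\bP^2$, I will parametrize the rational curves directly by their normalization maps. A rational curve $\Gamma$ of degree $e\leq E$ with normalization $\nu:\bP^1\to\Gamma$ is the image of a morphism $\bP^1\to\bP^2$ given by three binary forms of degree $e$ with no common zero. This morphism is birational onto its image. Let $\mathscr M_e\subset\bP\bigl(\coh^0(\bP^1,\aO(e))^{\oplus3}\bigr)$ be the locally closed subset of base-point-free triples. For $e\geq1$ and $\phi\in\mathscr M_e$, the image $\phi(\bP^1)$ is a curve of degree $e/\deg(\phi\to\phi(\bP^1))$. The locus $\mathscr M_e^{\mathrm{bir}}$ where $\phi$ is birational onto its image is constructible. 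To see this, stratify $\mathscr M_e$ by the degree of the image, computed from the universal image, which is closed in $\mathscr M_e\times\bP^2$. Upper semicontinuity together with generic flatness over strata gives the constructibility. Any such $\phi$ is a normalization of its image. Conversely, every $(\Gamma,\nu)$ in the statement arises this way for some $e\leq E$.

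Next I encode the boundary condition on $\mathscr M_e^{\mathrm{bir}}\times\cU$. Over it consider $\Phi^*F_1$ and $\Phi^*F_2$, which are universal binary forms of degrees $d_1e$ and $d_2e$ on $\bP^1$. The condition $\Gamma\not\subset D_a$ says that neither form vanishes identically. This is an open condition, defining $\mathscr V_e$. On $\mathscr V_e$ the condition $\#\operatorname{Supp}\nu^*D_a\leq2$ means that the product form $\Phi^*(F_1F_2)$, of degree $(d_1+d_2)e$, has at most two distinct roots on $\bP^1$. The set of binary forms of fixed degree $n$ with at most two distinct roots is the image of the finite morphism
\[
\bigsqcup_{i+j=n}\bP^1\times\bP^1\to\bP\coh^0(\bP^1,\aO(n)),\quad(x,y)\mapsto x^iy^j.
\]
It is therefore closed. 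Its preimage under the morphism $\mathscr V_e\to\bP\coh^0(\bP^1,\aO((d_1+d_2)e))$ is closed in $\mathscr V_e$. Call this preimage $\mathscr I_e$, which is constructible in $\mathscr M_e\times\cU$.

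Finally, the locus in the lemma is $\bigcup_{e=1}^{E}\mathrm{pr}_2(\mathscr I_e)$. Each $\mathrm{pr}_2$ is a morphism of finite type varieties, so Chevalley's theorem makes each image constructible, and a finite union of constructible sets is constructible. The step I expect to need the most care is the birationality stratification. One can avoid it entirely by noting that it is harmless to allow non-birational $\phi$. If $\phi$ factors as $\nu\circ\psi$ with $\psi$ finite of degree $k$, then $\operatorname{Supp}\phi^*D_a=\psi^{-1}(\operatorname{Supp}\nu^*D_a)$. In particular, $\#\operatorname{Supp}\phi^*D_a\leq2$ implies $\#\operatorname{Supp}\nu^*D_a\leq2$, and the image still has degree $\leq E$ and is not contained in $D_a$. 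So I would define $\mathscr I_e$ on all of $\mathscr M_e\times\cU$. Its projection is then exactly the desired locus, and no genus or normalization stratification is needed.
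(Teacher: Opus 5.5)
Your proposal is correct and, in its final form (defining $\mathscr I_e$ on all of $\mathscr M_e\times\cU$), it is essentially the paper's proof. The paper parametrizes degree-$e$ morphisms $\bP^1\to\bP^2$, imposes the open condition that the image lies in neither component of $D_a$, pulls back the closed condition that the boundary has at most two points in its support, applies Chevalley's theorem to the projection to $\cU$, and takes the union over $e\leq E$. Your birationality stratification is unnecessary, while your observation that $\operatorname{Supp}\phi^*D_a=\psi^{-1}(\operatorname{Supp}\nu^*D_a)$ for a non-birational $\phi=\nu\circ\psi$ makes explicit a step the paper leaves implicit when it asserts that the union of the projections is exactly the desired locus.
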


\begin{proof}
Fix an integer $e$ with $1\leq e\leq E$. Let
$\operatorname{Mor}_e(\bP^1,\bP^2)$ be the quasi-projective variety parametrizing morphisms $f:\bP^1\to\bP^2$ such that
$f^*\aO_{\bP^2}(1)\simeq\aO_{\bP^1}(e)$. Let $\mathscr M_e$ be the open
subset of $\operatorname{Mor}_e(\bP^1,\bP^2)\times\cU$ consisting of
pairs $(f,a)$ such that $f(\bP^1)$ is contained in neither component of
$D_a$. We define a morphism
\[
 \Phi_e:\mathscr M_e\longrightarrow
 \lvert\aO_{\bP^1}(d_1e)\rvert\times\lvert\aO_{\bP^1}(d_2e)\rvert
\]
which to $f$ associates the pullback of the two components of $D_a$. 
Let $Z_e\subset \lvert\aO_{\bP^1}(d_1e)\rvert\times\lvert\aO_{\bP^1}(d_2e)\rvert$ be the locus of pairs of effective divisors whose union of
supports has cardinality at most two. This locus is closed. Hence the projection of $\Phi_e^{-1}(Z_e)$ in $\cU$ is constructible.
Finally it suffices to observe that the finite union of the projections of $\Phi_e^{-1}(Z_e)$ over $1\leq e\leq E$ is the desired locus in the statement.
\end{proof}

For a very general parameter $a\in\cU$, Xi Chen's theorem \cite[Cor.~1.19]{Chenlog} asserts that every integral curve $\Gamma\not\subset D_a$, with normalization $\nu:\widetilde\Gamma\to\Gamma$, satisfies
\begin{equation}\label{eq:plane-pair-chen}
2g(\widetilde\Gamma)-2+\#\operatorname{Supp}\nu^*D_a\geq(d_1+d_2-4)\deg\Gamma.
\end{equation}
If $\Gamma$ is rational and $\#\operatorname{Supp}\nu^*D_a\leq2$, the left-hand side is nonpositive, whereas the right-hand side is positive by
\eqref{eq:d1d2-condition}. Hence no such curve exists for a very general
parameter in $\cU$.

\begin{proof}[Proof of Theorem~\ref{mainthm:plane-pair-complement}]
We shrink $\cU$ if necessary so the conclusions of Proposition~\ref{prop:plane-pair-uniform-degree} and Lemma~\ref{lem:plane-pair-bounded-incidence} hold. We fix $E$ the integer in Proposition~\ref{prop:plane-pair-uniform-degree}, and $\Sigma\subset\cU$ the constructible locus in Lemma~\ref{lem:plane-pair-bounded-incidence}. The subset $\Sigma$ is not dense in $\cU$ because otherwise it would contain a very general parameter which contradicts Xi Chen's inequality \eqref{eq:plane-pair-chen}. Thus $\overline\Sigma$ is a proper closed
subset of $\cU$.

Let $a\in\cU\setminus\overline\Sigma$. If there were a rational curve
$\Gamma\not\subset D_a$ whose normalization satisfied
$\#\operatorname{Supp}\nu^*D_a\leq2$, then, by
Proposition~\ref{prop:plane-pair-uniform-degree}, we would have
$\deg\Gamma\leq E$. Therefore $a\in\Sigma$ by the definition of $\Sigma$, a contradiction.
\end{proof}

\subsection{Hyperbolic embedding}\label{subsec:hyperbolic-embedding} 

\begin{proof}[Proof of Theorem~\ref{mainthm:plane-complement-hyperbolicity}]
The nonempty Zariski-open subset obtained in Theorem~\ref{mainthm:plane-pair-complement} meets the nonempty Zariski-open subset supplied by Hou--Wang--Xie's theorem \cite[Th.~1.1]{HouWangXie2026} (the locus where their Nevanlinna Second Main Theorem holds), since $\cP_{d_1,d_2}^{\mathrm{snc}}$ is irreducible. Let $a$ belong to their intersection, and let $f:\bC\to\bP^2\setminus D_a$ be a nonconstant entire curve. The Second Main Theorem of \cite[Th.~1.1]{HouWangXie2026} implies that $f$ is necessarily algebraically degenerate, i.e.\ has image contained in a curve. 

Let $\Gamma$ be the Zariski closure of $f(\bC)$, and let $\nu:\widetilde\Gamma\to\Gamma$ be its normalization. The map $f$ lifts
to a holomorphic map
\[
 \widetilde f:\bC\longrightarrow
 \widetilde\Gamma\setminus\operatorname{Supp}\nu^*D_a.
\]
If $g(\widetilde\Gamma)\geq 2$, the target is of course hyperbolic. If
$g(\widetilde\Gamma)=1$, then $\deg\nu^*D_a=(d_1+d_2)\deg\Gamma>0$, so at least one point is removed. If $g(\widetilde\Gamma)=0$,  then at least three points are removed by Theorem~\ref{mainthm:plane-pair-complement}. The target is therefore hyperbolic in every case, and $\widetilde f$ must be constant. This contradiction proves that $\bP^2\setminus D_a$ is Brody hyperbolic.

For $i,j=1,2$, by adjunction and B\'ezout's theorem we have
\[
 2g(C_{i,a})-2+\#(C_{i,a}\cap C_{j,a})
 =d_i(d_1+d_2-3)>0.
\]
Thus $C_{i,a}\setminus C_{j,a}$ is hyperbolic. We may therefore apply
Green's theorem~\cite{GreenLog} (cf.\ \cite[Th.~3.6.13]{Kobayashi1998}) and deduce that $\bP^2\setminus D_a$ is hyperbolically embedded in $\bP^2$. In particular, it is Kobayashi hyperbolic.
\end{proof}

\section*{Acknowledgements}

We thank Professor Yum-Tong Siu for a key suggestion made to S.-Y. Xie during the 2024 program ``Progress in Several Complex Variables and Complex Geometry'' at the Tianyuan Mathematics Research Center in Kunming: to use the direct-image theorem to obtain the required Zariski-open subset of the parameter space. We thank the Center for providing an excellent environment for mathematical exchange during our multiple visits. S.-Y. Xie presented this problem during a discussion session at the workshop ``Complex Hyperbolicity, Function Fields and Non-Archimedean Arithmetic'' at the Institute of Mathematics, Academia Sinica, Taipei, in June 2026. He thanks Julie Tzu-Yueh Wang for organizing the workshop, the participants for their discussions, and Academia Sinica for its hospitality. He first presented the main result publicly at the 32nd ICFIDCAA in Quanzhou on 22 August 2026 and thanks the organizers.

\section*{Funding}
S.-Y. Xie acknowledges partial support from the National Key R\&D Program of China under Grants No. 2023YFA1010500 and No. 2021YFA1003100, and from the National Natural Science Foundation of China under Grants No. 12288201 and No. 12471081, as well as support from the  Xiaomi Young Talents Program.

S.-Y. Zhao acknowledges partial support from the French National Research Agency under the projects GAG (ANR-24-CE40-3526-01) and DynAtrois (ANR-24-CE40-1163).

\section*{AI usage}
The strategy of relating the Zariski-open condition to Poincaré's problem is purely developed by the authors, initially inspired by the above mentioned insight of Siu. The bulk of this paper was done before the use of AI. ChatGPT5.6 helped us to find Lu--Miyaoka's paper, which we were not aware of and which simplified our old approaches. Apart from this, AI (ChatGPT5.6 and Deepseek) were also used for editorial work of the manuscript. 

\bibliographystyle{amsplain}
\bibliography{references0903}

\end{document}